\documentclass[11pt, a4paper]{amsart}
\usepackage[margin = 2.7cm]{geometry}
\usepackage[title]{appendix}
\usepackage{mathtools}
\usepackage{setspace}
\usepackage{textcomp}
\usepackage{enumerate}
\usepackage{graphicx}
\usepackage{subfigure}
\usepackage[dvipsnames]{xcolor}


\usepackage{graphics}
\usepackage[colorlinks=true]{hyperref}
\hypersetup{urlcolor=blue, citecolor=red}

\usepackage{amsmath}
\usepackage{mathrsfs}


\newtheorem{thm}{Theorem}[section]
\newtheorem{crl}{Corollary}

\newtheorem{lm}{Lemma}
\newtheorem{pp}{Proposition}

\theoremstyle{definition}
\newtheorem{df}{Definition}
\newtheorem{remark}{Remark}


\newcommand*\diff{\mathop{}\!\mathrm{d}}

\newcommand{\ep}{\epsilon}
\newcommand{\lge}{\langle}
\newcommand{\rge}{\rangle}

\newcommand{\M}{(0, T) \times \Omega}

\newcommand{\op}{(\partial^2_t - \Delta - \partial_t\Delta )}
\newcommand{\opq}{(\partial^2_t - \Delta - \partial_t\Delta  + q(t,x))}
\newcommand{\opqstar}{(\partial^2_t - \Delta + \partial_t\Delta  + q(t,x))}

\newcommand{\Rn}{\mathbb{R}^n}

\newcommand{\Tzero}{T_\omega}

\newcommand{\Tone}{T_1}
\newcommand{\Tonet}{\widetilde{T}_1}
\newcommand{\Ttwo}{T_2}
\newcommand{\Ttwot}{\widetilde{T}_2}

\DeclareMathOperator\supp{supp}

\newcommand{\dt}{\partial_t}
\newcommand{\ddt}{\partial^2_t}

\newcommand{\vN}{v_{N}}

\newcommand{\betak}{\beta^{(k)}}
\newcommand{\betaone}{\beta^{(1)}}
\newcommand{\betatwo}{\beta^{(2)}}
\newcommand{\qone}{{q^{(1)}}}
\newcommand{\qtwo}{{q^{(2)}}}

\newcommand{\tP}{\widetilde{P}}

\newcommand{\zetaz}{\zeta^o}

\newcommand{\ba}{\[\begin{aligned}}
\newcommand{\ea}{\end{aligned}\]}

\newcommand{\Bpk}{B_{p, k}}
\newcommand{\Bpkloc}{B_{p, k}^\mathrm{loc}}
\newcommand{\BinP}{B_{\infty, \tP}}
\newcommand{\BinPloc}{B_{\infty, \tP}^\mathrm{loc}}
\newcommand{\Po}{P_o}
\newcommand{\mJ}{\mathcal{J}}
\newcommand{\mI}{\mathcal{I}}
\newcommand{\vep}{\varepsilon}
\newcommand{\mL}{{L}}

\newcommand{\qbone}{{\qone, \betaone}}
\newcommand{\qbtwo}{{\qtwo, \betatwo}}
\newcommand{\qbk}{{q^{(k)}, \beta^{(k)}}}
\newcommand{\mLlin}{\mL^\mathrm{lin}}

\newcommand{\uk}{u^{(k)}}
\newcommand{\uone}{u^{(1)}}
\newcommand{\utwo}{u^{(2)}}

\newcommand{\Lamlin}{\Lambda^\mathrm{lin}}

\newcommand{\teal}{\color{teal}}
\newcommand{\Pad}{^t \! P}
\newcommand{\tPad}{\widetilde{{^t \! P}}}
\newcommand{\refb}{\hyperref[condition_b]{b}}
\newcommand{\tX}{\widetilde{X}}
\newcommand{\tf}{\tilde{f}}
\newcommand{\tu}{{u}}
\usepackage{accents}
\newlength{\dhatheight}
\newcommand{\doublehat}[1]{%
    \settoheight{\dhatheight}{\ensuremath{\hat{#1}}}%
    \addtolength{\dhatheight}{-0.35ex}%
    \hat{\vphantom{\rule{1pt}{\dhatheight}}%
        \smash{\hat{#1}}}}
\newcommand{\epv}{r_0}
\newcommand{\Laplace}{\Delta}
\newcommand{\ltwo}{{L^2}}
\newcommand{\hone}{{H^1}}

\newcommand{\kn}{k_n}
\newcommand{\zm}{{Z^m}}
\newcommand{\zmm}{{Z^{m-1}}}

\newcommand{\Cjl}{C_{l,j}}
\newcommand{\Ckn}{\lceil (\kn-1)/2\rceil}
\newcommand{\Ngf}{N_{fg}}
\newcommand{\tzm}{{\widetilde{Z}^m}}

\newcommand{\un}{u_i}
\newcommand{\tg}{\tilde{g}}
\newcommand{\hEz}{E_1}
\newcommand{\hE}{E_H}
\newcommand{\Gm}{\mathcal{G}^m}
\newcommand{\Hm}{\mathcal{H}^m}
\newcommand{\qCm}{{{\teal Q}}}

\title[Inverse problems for a quasilinear strongly damped wave equation]{Inverse problems for a quasilinear strongly damped wave equation arising in nonlinear acoustics}
\author[Li Li]{Li Li }
\address[1]{Department of Mathematics, University of California, Irvine, CA 92697, USA}

\author[Yang Zhang]{Yang Zhang }
\address[2]{Department of Mathematics, University of Washington,
    Seattle, WA 98195, USA}

\date{}

\begin{document}

\maketitle

\begin{abstract}
We consider inverse problems for a Westervelt equation with a strong damping and a time-dependent potential $q$.
We first prove that all boundary measurements, including the initial data, final data, and the lateral boundary measurements, uniquely determine  $q$ and the nonlinear coefficient
$\beta$.
The proof is based on complex geometric optics construction and the approach  proposed by Isakov.
Further, by considering fundamental solutions supported in a half-space constructed by H\"ormander, we prove that with vanishing initial conditions the Dirichlet-to-Neumann map determines $q$ and $\beta$.
\end{abstract}

\section{Introduction}
The Westervelt equation is a classical model in the field of nonlinear acoustics.
Within a thermoviscous medium, in some situations the Westervelt equation is accompanied by a strong damping term, as exemplified in \cite{kaltenbacher2018fundamental}.
In this work, we consider inverse problems of recovering a potential and a nonlinear coefficient for the strongly damped Westervelt equation, from two types of boundary measurements.

More explicitly,
for $n \geq 2$, let $T > 0$ be fixed  and $\Omega \subset \mathbb{R}^n$ be a bounded set with a smooth boundary $\partial \Omega$.
Consider the following quasilinear strongly damped wave equation
\begin{equation}\label{eq_nl}
    \begin{aligned}
        \opq u - \beta(t,x) \partial_t^2 (u^2) &= 0, & \  & \mbox{on } \M,\\
        u(t,x) &= h, & \ &\mbox{for  } x\in \partial \Omega,\\
        u = g_0, \  \partial_t u &= g_1, & \ &\mbox{for  } t=0,\\
    \end{aligned}
\end{equation}
where $q$ is the potential, $\beta$ is the nonlinearity coefficient for the medium,
and $\Delta $ is the Laplacian operator with respect to $x$.
This initial-boundary value problem is locally well-posed for sufficiently small data $(g_0, g_1, h)$ satisfying the compatibility condition, see Proposition \ref{pp_nl}.



\subsection*{Main results} We introduce the following notations.
For fixed large $m$,
let $\mathcal{G}^m$ be the space of all
\[
(g_0, g_1, h) \in H^{2m+1}(\Omega) \times H^{2m-1}(\Omega) \times C^{2m + 2}([0,T] \times \partial \Omega)
\]satisfying the $m$th-order compatibility condition defined in (\ref{def_comp_nl}).
Then there exists a unique solution $u$ to (\ref{eq_nl}) for sufficiently small $(g_0, g_1, h) \in \Gm$.
First, we consider the recovery from the so-called all boundary measurements.
We define
\[
\mL_{q, \beta}: (g_0, g_1, h) \rightarrow (u(T), \partial_t u(T), \partial_\nu u|_{\partial \Omega}),
 \]
for small initial-boundary data $(g_0, g_1, h) \in \Gm$.
We prove that the following result.
\begin{thm}\label{thm1}
    Let $n \geq 2$, $m \geq 2(\lfloor n/2 \rfloor +1)$, and $T > 0$ be fixed.
    Let $\Omega \subset \Rn$ be a bounded open set with smooth boundary.
    For $k = 1,2$, suppose $q^{(k)}, \beta^{(k)} \in C^\infty([0,T] \times \bar{\Omega})$ with $\dt^j \beta(0,x) = 0$ for $x \in \partial \Omega$ and $j = 0, \ldots, m$.
    Suppose
    \[
    L_{\qone, \betaone}(g_0, g_1, h) = L_{\qtwo, \betatwo}(g_0, g_1, h)
    \]
    for small $(g_0, g_1, h) \in \Gm$,
    then
    \[
    \qone = \qtwo, \qquad \betaone = \betatwo, \qquad \text{for } (t,x)\in [0,T] \times \bar{\Omega}.
    \]
\end{thm}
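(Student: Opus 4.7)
My plan is to combine a higher-order linearization of (\ref{eq_nl}) in the size of the data with complex geometric optics (CGO) solutions adapted to the strongly damped wave operator $\op$, and then to close the argument via Isakov's idea of using suitable products of solutions. Because the nonlinearity $\beta\ddt(u^2)$ is purely quadratic, the first linearization of the solution in the data size sees only the potential $q$, while the mixed second linearization isolates the coefficient $\beta$.

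\textbf{Linearization.} I would parametrize the data as $\epsilon_1 f_1 + \epsilon_2 f_2$ with $f_j \in \Gm$ small, and differentiate the resulting small solution of (\ref{eq_nl}) in $(\epsilon_1, \epsilon_2)$ at $0$. Each single-parameter derivative $v_j$ satisfies $\opq v_j = 0$ with data $f_j$, while the mixed second derivative $w = \partial_{\epsilon_1}\partial_{\epsilon_2} u|_0$ solves $\opq w = 2\beta\,\ddt(v_1 v_2)$ with zero data. The hypothesis $L_\qbone = L_\qbtwo$ on small data then transfers, at each order in $(\epsilon_1, \epsilon_2)$, to equality of the corresponding ``all boundary'' measurements for the two coefficient sets.

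\textbf{Recovery of $q$.} Let $V$ be the difference of the first-order linearizations (same data, coefficient sets $1$ and $2$). A direct computation gives
\[
(\op + \qone)\, V = -(\qone - \qtwo)\, v\two,
\]
with $V$ and $\dt V$ vanishing at $t = 0$ and $t = T$ and $V,\ \partial_\nu V$ vanishing on $\partial\Omega$. Pairing with $\varphi$ solving the adjoint equation $\opqstar \varphi = 0$ (for $q = \qone$) and integrating by parts kills every boundary/initial/final contribution, producing
\[
\int_0^T \int_\Omega (\qone - \qtwo)\, v\two\,\varphi\,\diff x\,\diff t = 0.
\]
I would then construct CGO solutions for $\opq$ and $\opqstar$ of the form $e^{\rho\cdot(t,x)/h}(a + r_h)$, with phases solving an eikonal equation adapted to the principal symbol of $\opq$ and remainders $r_h$ controlled by an appropriate Carleman-type estimate. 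Varying the complex parameters and invoking density of the products $\{v\two\,\varphi\}$ in $L^1(\M)$ then forces $\qone = \qtwo$.

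\textbf{Recovery of $\beta$ and the main obstacle.} Setting $q := \qone = \qtwo$, the $O(\epsilon_1\epsilon_2)$ part of the measurement hypothesis, together with the previous step, implies that the difference $W$ of the two second-order linearizations satisfies $\opq W = 2(\betaone - \betatwo)\,\ddt(v_1 v_2)$ with zero data and zero all-boundary measurements. Testing against $\varphi$ solving $\opqstar \varphi = 0$ yields
\[
\int_0^T \int_\Omega (\betaone - \betatwo)\,\ddt(v_1 v_2)\,\varphi\,\diff x\,\diff t = 0;
\]
the compatibility condition $\dt^j \betak(0,x) = 0$ for $x \in \partial\Omega$ keeps the higher-order linearization free of parasitic boundary contributions. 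Taking $v_1, v_2, \varphi$ as CGO solutions and using density of their triple products in $L^1(\M)$—the Isakov step—then gives $\betaone = \betatwo$. The main obstacle I foresee is the CGO construction for the pseudo-parabolic operator $\opq$: the term $-\dt\Delta$ is of mixed order and modifies the principal symbol, so the eikonal equation, the transport equations for the amplitudes, and the Carleman-type estimate on $r_h$ all have to be reworked for this operator. Establishing density of the resulting pairwise and triple products in $L^1(\M)$—uniformly across $[0,T]$ so as to recover time-dependent coefficients on the full cylinder $[0,T]\times\bar\Omega$—is the second technical cornerstone of the proof.
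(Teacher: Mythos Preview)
Your overall architecture matches the paper's: first-order linearization to isolate $q$, second-order (mixed) linearization to isolate $\beta$, CGO solutions for $\opq$ and its formal adjoint, and an integration-by-parts identity that kills all boundary, initial, and final terms thanks to the all-boundary measurements. The two proofs diverge in how the CGO remainder is controlled and in how the integral identity is converted into $q^{(1)}=q^{(2)}$ (respectively $\beta^{(1)}=\beta^{(2)}$).

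For the remainder, you propose a Carleman-type estimate; the paper instead follows Isakov's 1991 approach and uses H\"ormander's regular fundamental solutions for the shifted constant-coefficient operator $P_o(D)=P(D+\zeta^0)$, together with the explicit lower bound $\widetilde{P}_o(\zeta)\gtrsim\rho^3$ coming from $|\partial_\xi P|^2\ge|2\xi(1+i\tau)|^2$ after the complex shift $\zeta^0=(-i\rho^2,i\rho\omega)$. This buys a remainder estimate $\|r_N\|_{H^s}\lesssim\rho^{-N-1}$ without any Carleman machinery, which matters because the strongly damped operator is neither hypoelliptic nor of principal type, so the standard Carleman toolbox does not apply off the shelf; your proposal leaves this as an acknowledged obstacle, whereas the paper's route simply avoids it.

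For the endgame, you invoke density of pairwise and triple products of CGO solutions in $L^1((0,T)\times\Omega)$. The paper does not argue this way. It chooses the explicit phases $\varphi=-i\rho^2 t+i\rho\,x\cdot\omega$ and $\psi=i\rho^2 t+i\rho\,x\cdot\varpi$ with $\varpi=-\omega$ so that the exponentials cancel in the pairing, and takes leading amplitudes $a_0=\phi(t)\prod_{j\ge2}\chi(\omega_j\cdot(x-y_0))$ localized to an $\epsilon$-tube around a line; sending $\rho\to\infty$ and then $\epsilon\to0$ extracts the X-ray transform of $q(t,\cdot)$ along every line for each fixed $t$, and injectivity of the X-ray transform gives $q^{(1)}=q^{(2)}$. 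For $\beta$, three CGO solutions are built along rays through a prescribed point $x_0$ (with directions $\omega_1,\omega_2$ and $\varpi=-(\omega_1+\omega_2)/\sqrt{2}$ so the phases again cancel), and the same localization recovers $\beta(t,x_0)$ pointwise. This is more concrete than a density statement and sidesteps the question of whether such products are actually dense in $L^1$ of the space-time cylinder for this particular operator, which is not established anywhere and would itself be the heart of the matter in your approach.
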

To prove this result, we construct a exponentially growing geometric optics solution (i.e., the phase function has complex frequency) and
use  regular fundamental solutions constructed in \cite[Chapter 10]{H2005analysis} to estimate the remainder term.
This approach is first introduced in \cite{isakov1991completeness}, which provides an alternative proof of the unique determination theorem for the operator $\Laplace + q$ in \cite{sylvester1987global} and can be applied to other differential operators with constant coefficients.
This idea is also used in \cite{krupchyk2010borg} to prove a multidimensional Borg-Levinson theorem for elliptic operators of higher order with constant coefficients.
With the remainder term relatively small, for our model, the recovery of $q$ comes from the linear problem and  that of $\beta$ comes from the second-order linearization.

In many applications the initial data vanish, and in some cases measuring the final data is impossible.
To address this, in the second part of this paper, we consider the recovery of the potential $q$ and the nonlinear coefficient $\beta$ from lateral boundary measurements.
More precisely, we define
\[
\Hm = \{h \in C^{2m+2}([0,T]\times \partial \Omega): \ \dt^j h(0,x) = 0, \text{ for } x \in \partial \Omega \text{ and } j = 0, \ldots, m\}.
\]
When $g_0 = g_1 = 0$, the boundary value problem (\ref{eq_nl})
has a unique solution for small $h \in \Hm$, as a special example of Proposition \ref{pp_nl}.
We consider the Dirichlet-to-Neumann map
\[
\Lambda_{q, \beta}: h \rightarrow \partial_\nu u|_{\partial \Omega},
\]
where $u$ satisfies (\ref{eq_nl}) with $g_0 = g_1 =0$.
We prove the following result.
\begin{thm}\label{thm2}
    Let $n \geq 2$, $m \geq 2(\lfloor n/2 \rfloor +1)$, and $T > 0$ be fixed.
    Let $\Omega \subset \Rn$ be a bounded and convex open set with smooth boundary.
    For $k = 1,2$, suppose $q^{(k)}, \beta^{(k)} \in C^\infty([0,T] \times \bar{\Omega})$.
    Suppose
    \[
    \Lambda_{\qone, \betaone}(h) = \Lambda_{\qtwo, \betatwo}(h)
    \]
    for small $h \in \Hm$,
    then
    \[
    \qone = \qtwo, \qquad \betaone = \betatwo, \qquad \text{for } (t,x)\in [0,T] \times \bar{\Omega}.
    \]
\end{thm}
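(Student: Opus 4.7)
The proof follows the same second-order linearization template as Theorem~\ref{thm1}, but the CGO solutions of $\opq$ and of its formal adjoint $\opqstar$ that one tests against must now be compatible with the homogeneous Cauchy data at $t=0$ (respectively at $t=T$). This is arranged by building their remainders from H\"ormander's half-space fundamental solutions \cite[Ch.~10]{H2005analysis} in place of the regular fundamental solutions used for Theorem~\ref{thm1}, so that causal propagation is baked into the construction.

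\emph{Linearization.} Take $h = \epsilon_1 h_1 + \epsilon_2 h_2$ with $h_j \in \Hm$ small, and let $u(\epsilon_1,\epsilon_2)$ be the corresponding solution. The first derivatives $v_j = \partial_{\epsilon_j}u|_0$ solve $\opq v_j = 0$ with vanishing Cauchy data and trace $h_j$, while $z = \partial_{\epsilon_1}\partial_{\epsilon_2}u|_0$ solves $\opq z = 2\beta\,\ddt(v_1v_2)$ with homogeneous Cauchy and boundary data. The hypothesis $\Lambda_{\qbone}(h) = \Lambda_{\qbtwo}(h)$ yields equality of Neumann traces at each order in $(\epsilon_1,\epsilon_2)$. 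Pairing with a test solution $v_3$ of $\opqstar v_3 = 0$ with vanishing Cauchy data at $t=T$ and integrating by parts over $\M$ produces
\[
\int_\M (\qone - \qtwo)\, v_1 v_3 \,\diff x\diff t = 0,\qquad
\int_\M (\betaone - \betatwo)\, \ddt(v_1 v_2)\, v_3 \,\diff x\diff t = 0,
\]
the second identity being invoked only after $\qone = \qtwo$ has been extracted from the first.

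\emph{CGO construction with one-sided time support.} For $\zeta \in \mathbb{C}^{n+1}$ with $|\operatorname{Im}\zeta|$ large, moving along the characteristic variety of the principal symbol of $\opq$, I would seek
\[
v = e^{-\zeta\cdot(t,x)}\bigl(a(t,x) + r_\zeta(t,x)\bigr),
\]
with amplitude $a$ chosen so that the conjugated operator $P_\zeta := e^{\zeta\cdot(t,x)}\opq e^{-\zeta\cdot(t,x)}$ acting on $a$ is of order zero in $\zeta$, and $r_\zeta$ obtained by convolving the resulting lower-order source against a fundamental solution of $P_\zeta$. The crucial choice is to take H\"ormander's fundamental solution supported in a half-space $\{\phi \ge 0\}$ with $\phi$ linear, non-characteristic for $P_\zeta$, vanishing on $\{t=0\}$ and positive for $t>0$. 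Then $v$ vanishes in $\{t \le 0\}$, so its restriction to $\M$ has vanishing Cauchy data at $t=0$; the adjoint solutions $v_3$ for $\opqstar$ are built analogously using the reflected half-space about $t=T$. The $\BinPloc$-type estimates of \cite[Ch.~10]{H2005analysis} yield $\|r_\zeta\|_{L^2(\M)} = o(1)$ as $|\operatorname{Im}\zeta|\to\infty$ along admissible families, and convexity of $\Omega$ is what guarantees that a single $\phi$ works uniformly across a neighborhood of $[0,T]\times\bar\Omega$.

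\emph{Recovery and main obstacle.} Inserting pairs $(v_1, v_3)$ with phases $\zeta, \zeta'$ chosen so that $\zeta + \zeta' \to i\eta$ for arbitrary $\eta \in \mathbb{R}^{n+1}$ into the first integral identity and passing to the limit $|\operatorname{Im}\zeta| \to \infty$ gives $\widehat{\qone - \qtwo}(\eta) = 0$, hence $\qone \equiv \qtwo$ on $\M$; feeding this back into the second identity with three such solutions and normalizing the polynomial factor in $\zeta$ produced by $\ddt$ on $v_1 v_2$ then yields $\betaone \equiv \betatwo$. The main difficulty is the CGO step: the principal symbol $-i\tau|\xi|^2$ of $\opq$ is degenerate and of mixed order in $(\tau,\xi)$, so the admissible directions for $\operatorname{Im}\zeta$ on the characteristic variety and the half-space direction $\phi$ must be chosen in compatible, non-characteristic ways. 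It is precisely H\"ormander's general half-space existence theorem, rather than any hyperbolic energy construction, that affords this flexibility, and this is where the convexity assumption on $\Omega$ enters.
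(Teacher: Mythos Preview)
Your overall architecture---second-order linearization, CGO solutions with remainder built from a half-space fundamental solution, integral identities---matches the paper's. But there is a genuine conceptual error in the CGO step that would derail the construction, and a key technical lemma is missing.

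\textbf{The half-space is characteristic, not non-characteristic.} You write that the half-space $\{\phi\ge 0\}$ should have $\phi$ ``non-characteristic for $P_\zeta$.'' For the strongly damped operator the principal symbol is $-i\tau|\xi|^2$, which \emph{vanishes} at $N=(1,0,\ldots,0)$; the shift $D\mapsto D+\zeta^0$ does not alter the top-order part, so the hyperplane $\{t=\varepsilon\}$ remains \emph{characteristic} for $P_o(D)=P(D+\zeta^0)$. This is precisely why the Chapter~10 regular fundamental solutions (which you cite) are insufficient here and why the paper invokes H\"ormander's \emph{characteristic} Cauchy theory from Chapter~12 (Theorems~12.8.1, 12.8.12, and the local existence version the paper extracts as Proposition~\ref{pp_12813}). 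In the characteristic case uniqueness fails, and the content of the theory is an existence statement: a solution supported in $H$ exists provided condition~(\refb) on the algebraic roots of $P_o(\zeta+\sigma N)=0$ holds.

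\textbf{Uniformity in $\rho$ is the crux.} Condition~(\refb) is automatic for any fixed $P_o$, but the constants $A_1,A_2$ it produces a priori depend on the shift $\zeta^0=(-i\rho^2,i\rho\omega)$. The decisive technical step in the paper is Lemma~\ref{lm_A1A2}: one verifies by explicit computation of the two roots $\sigma_\pm$ of $P(\zeta+\zeta^0+\sigma N)=0$ that $\sup_B\operatorname{Im}\sigma\ge 0$ on every unit ball with real center, \emph{uniformly in $\rho$}. Without this, the operator norm of the half-space parametrix $E_H$ in Proposition~\ref{pp_Eestimate_new} could blow up with $\rho$, and the remainder estimate $\|r_N\|_{H^s}\le C_N\rho^{-N-1}$ would fail. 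Your proposal does not identify this issue.

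\textbf{Recovery mechanism.} Your plan to send $\zeta+\zeta'\to i\eta$ and read off a Fourier transform is not what the paper does. The paper takes amplitudes $a_0=\phi(t)\prod_{j\ge 2}\chi(\omega_j\cdot(x-y_0))$ concentrated near a ray $\gamma(s)=s\omega+y_0$; the phases of the forward and backward CGOs cancel exactly, and after $\rho\to\infty$ and shrinking $\chi$ one extracts the X-ray transform of $q^{(1)}-q^{(2)}$ along $\gamma$, then invokes its injectivity. For $\beta$, three such rays through a point $x_0$ localize the integrand to a shrinking neighborhood of $x_0$. Your Fourier approach is plausible in spirit, but the degenerate principal symbol makes it unclear which directions $\eta$ are actually reachable while maintaining the half-space support and the $\widetilde{P_o}\gtrsim\rho^3$ lower bound simultaneously; the paper's X-ray route sidesteps this. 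Finally, convexity of $\Omega$ is not used to make ``a single $\phi$ work uniformly''; the half-space is always $\{t\ge\varepsilon\}$ regardless of $\Omega$.
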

To prove this theorem, we construct a complex geometric optics solution and would like to follow the same idea that we use earlier.
To construct a remainder term with vanishing initial conditions, we consider the characteristic Cauchy problems studied in \cite[Chap 12]{H2005analysis}.
We observe that for the differential operator involved in the equation for the remainder term, see (\ref{eq_wR_new}),
the boundary of the half-space $H = \{t \geq \vep \}$ is characteristic, where $\vep >0$ is small.
Then there is no uniqueness for the initial value problem and the aim is to find a solution with support in $H$ (equivalently, a fundamental solution with support in $H$).
We describe under which condition  such fundamental solution almost exists and particularly with a desired estimate,
see Proposition \ref{pp_12813} and \ref{pp_Eestimate_new}.
Then we verify for our model, this condition is satisfied and therefore the remainder term is relatively small.
Then the recovery of $q$ and $\beta$ follows from a similar argument as before.

We note that in these two theorems above, the assumption that $q$ and $\beta$ are smooth is just for convenience.
One may consider non-smooth functions but with certain regularity, which mainly depends on the assumptions for the local well-posedness.



Inverse problems for Westervelt equations has been considered in \cite{acosta2022nonlinear, eptaminitakis2022weakly}, for a general nonlinearity in \cite{uhlmann2023inverse},
and with various damping effects in
\cite{kaltenbacher2021identification,kaltenbacher2023simultaneous,kaltenbacher2022inverse,kaltenbacher2023nonlinearity,zhang2023nonlinear,li2023inverse}.
Most recently in \cite{fu2023inverse}, the recovery of a time-dependent nonlinearity for the Jordan-Moore-Gibson-Thompson equation is considered, from all boundary measurements.
The model considered in \cite{fu2023inverse} has the principal part given by $\dt^3 - b \dt \Laplace$, for a constant $b>0$.
In this paper, the principal part of our model is given by the strong damping $\dt \Laplace$,
which is not hypoelliptic or of principal type.

The idea of using multi-fold linearization and nonlinear interaction of solutions to solve inverse problems can be found in \cite{kurylev2018inverse} for a semilinear wave equation, see also \cite{lassas2018inverse}.
In \cite{acosta2022nonlinear}, the authors use Gaussian beams and the second-order linearization to recover the nonlinear coefficient for a Westervelt equation without damping.
For inverse problems for time-dependent nonlinear Schr\"odinger equations,
in \cite{lassas2022inverse} the authors use geometric optics construction and the second-order linearization to recover a potential and the nonlinear coefficient; later in \cite{lai2023partial}, the stability and partial data problem are considered. The multi-fold linearization method has also been used to solve inverse problems for semilinear elliptic and fractional equations (see \cite{krupchyk2020remark, lassas2020partial, li2023elas} for instance).

The rest of this paper is organized as follows.
In Section \ref{sec_well},  we consider the initial-boundary value problem in (\ref{eq_nl}) and prove the local well-posedness.
In Section \ref{sec_prelim}, we briefly recall some  results in \cite[Chap 10, 12]{H2005analysis} and
we prove Proposition \ref{pp_E0}, \ref{pp_12813}, \ref{pp_Eestimate_new}, which provides estimates for regular fundamental solutions and fundamental solutions with support in a half space.
Note that these results may be used for other differential operators with constant coefficients.
In Section \ref{sec_goc}, we construct geometric optic solutions using a complex linear phase function.
In Section \ref{sec_allbd}, we prove Theorem \ref{thm1} using regular fundamental solutions.
In Section \ref{sec_gocnew}, we prove Theorem \ref{thm2} using fundamental solutions with support in a half space.
In both sections, we recover $q$ from the first-order linearization of the measurements and we recover $\beta$ from the second-order linearization.
In Appendix \ref{sec_apdx}, we prove the local well-posedness for the linearized problem, which is used in the proof of Section \ref{sec_well}.

\noindent \textbf{Acknowledgements.} L.L. and Y.Z. would like to thank Professor Katya Krupchyk and Professor Gunther Uhlmann for helpful discussions.

\section{The local well-posedness}\label{sec_well}
In this section, we prove the initial-boundary value problem (\ref{eq_nl}) is locally well-posed.
The local and global well-posedness for the strongly damped Westervelt equation without the potential $q(t,x)$
is proved in \cite{kaltenbacher2011well}, where properties of the semigroup generated by strongly damped wave equations are used.
Here we follow the ideas in \cite{dafermos1985energy} for our model.

For $r \in \mathbb{R}$,  we use $\lfloor r \rfloor$ to denote the largest integer that is no greater than $r$ and we use $\lceil r \rceil$ to denote the largest integer that is no less than $r$.
For convenience, we define
\begin{align}\label{eq_kn}
    \kn = \lfloor n/2 \rfloor +1.
\end{align}
The aim of this section is to establish local well-posedness for the following initial-boundary value problem
\begin{equation}\label{eq_nonlinear_bvp}
    \begin{aligned}
        \ddt u - \Laplace u - \dt \Laplace u  + q(t,x)u  - \beta(t,x) \partial_t^2( u^2) &= 0, & \  & \mbox{on } (0,T)\times \Omega,\\
        u &= h, & \ &\mbox{for  } x\in \partial \Omega,\\
        u = g_0(x), \quad  \partial_t u &= g_1(x) , & \ &\mbox{for  } t=0.
    \end{aligned}
\end{equation}
%
Note that the initial-boundary data $g_0,g_1, h$ must be compatible in some sense.
For this purpose, we recursively define
\begin{align}\label{def_gl_nl}
    g_{l+1} \coloneqq  (1-2\beta(0)g_0)^{-1} \tilde{G}_l(0,x,g_0, \ldots, g_l),
\end{align}
where we write
\begin{align}\label{def_tG}
    &\tilde{G}_l(t, x, u, \dt u, \ldots, \dt^l u)\\
    =& 
    -\sum_{j=2}^{l}\Cjl\dt^{l+1-j} (1-2\beta u) \dt^j u
    + \Laplace \dt^{l-1} u
    +  \Laplace \dt^{l} u
    - \dt^{l-1}(qu)
    -2 \dt^{l-1}(\beta (\dt u)^2). \nonumber
\end{align}
When $l=1$ we do not have the first term on the right-hand side.
We say $(g_0, g_1, h)$ satisfies the \textit{$m$th-order compatible condition}, if
\begin{align}\label{def_comp_nl}
    g_{l}  = \dt^{l} h(0), \qquad \text{for }l = 0, \ldots, m.
\end{align}
Recall $\mathcal{G}^m$ is the space of all $(g_0, g_1, h) \in H^{2m+1}(\Omega) \times H^{2m-1}(\Omega) \times C^{2m + 2}([0,T] \times \partial \Omega)$ satisfying the $m$th-order compatibility condition (\ref{def_comp_nl}).
We define
\[
\|(g_0, g_1, h)\|_{\Gm} = \|h\|_{C^{2m + 2}([0,T] \times {\partial \Omega})} + \|g_0\|_{H^{2m+1}}+ \|g_1\|_{H^{2m-1}}.
\]
Then we prove the following proposition.
\begin{pp}\label{pp_nl}
    For fixed $m \geq 2\kn$ and $T>0$,
    let $q, \beta \in C^{\infty}([0,T] \times \bar{\Omega})$. Suppose $(g_0, g_1, h) \in \Gm$ with
    $\|(g_0, g_1, h)\|_{\Gm} \leq \rho_h$.
    Then for sufficiently small $\rho_h >0$,
    the initial-boundary value problem (\ref{eq_nonlinear_bvp}) has a unique solution
    \[
    u \in \bigcap_{k=0}^{m} C^{m-k}([0,T]; H^{2k+1}(\Omega)),
    \]
    such that for some constant $C>0$, we have
    \begin{align}\label{est_nl}
     \sum_{k=0}^m \sup_{s \in [0, T]}   \|\partial_t^{m-k} u(s)\|^2_{{H^{2k+1}}}
    \leq C \|(g_0, g_1, h)\|_{\Gm}.
    \end{align}
\end{pp}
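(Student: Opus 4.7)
The plan is to follow the energy method of \cite{dafermos1985energy}, reducing the quasilinear problem to a Picard iteration built on the linear strongly damped wave equation whose well-posedness is established in Appendix \ref{sec_apdx}. Expanding $\ddt(u^2) = 2u\,\ddt u + 2(\dt u)^2$, the equation (\ref{eq_nonlinear_bvp}) rewrites as
\[
(1 - 2\beta u)\ddt u - \Delta u - \dt\Delta u + q u = 2\beta(\dt u)^2,
\]
so for small $u$ the leading part is a controlled perturbation of a linear operator with strong damping. As a first step I would homogenize the boundary: pick a smooth extension $H$ of $h$ with $\|H\|_{C^{2m+2}([0,T]\times \bar\Omega)}$ controlled by $\|h\|_{C^{2m+2}([0,T]\times\partial \Omega)}$, and set $v = u - H$; then $v$ solves a quasilinear equation of the same form with zero Dirichlet data. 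The $m$th-order compatibility condition (\ref{def_comp_nl}) built into $\Gm$ guarantees precisely that the recursively defined traces $\dt^l v(0)|_{\partial\Omega}$ vanish for $l = 0,\ldots,m$, which is what will be needed for the linearized problem to be solvable at the top regularity at each step of the iteration.

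Next, on a small closed ball in $\bigcap_{k=0}^m C^{m-k}([0,T];H^{2k+1}(\Omega))$ (with zero trace and the above initial compatibility), define the iteration map $\Phi: w \mapsto v$ by solving the linear problem
\[
(1 - 2\beta(w+H))\ddt v - \Delta v - \dt\Delta v + q v = 2\beta(\dt w + \dt H)^2 + (\text{lower-order terms from }H),
\]
with the prescribed boundary and initial data. For $\rho_h$ and the ball radius small enough, $1 - 2\beta(w+H) \geq 1/2$ uniformly, so Appendix \ref{sec_apdx} delivers $v$ in the required regularity class. The core step is the a priori estimate (\ref{est_nl}): testing the equation against $\dt v$ yields the basic energy identity in which the damping term $-\dt\Delta v$ generates $\|\nabla \dt v\|_{L^2}^2$, producing the extra integrated regularity $\dt v \in L^2_t H^1_x$ that makes the approach work. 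Differentiating $j$ times in $t$ and bootstrapping via elliptic regularity for $-\Delta$ applied to the equation viewed as $-\Delta v = -\ddt v + \dt\Delta v - qv + 2\beta u\,\ddt u + 2\beta(\dt u)^2$ at each fixed time trades time regularity for spatial regularity and produces the full norm on the left of (\ref{est_nl}). The hypothesis $m \geq 2\kn = 2(\lfloor n/2\rfloor + 1)$ enters at exactly this point: it guarantees that $H^{2\kn}(\Omega)$ is a Banach algebra, so the Moser-type product estimates controlling nonlinear terms such as $\beta u\,\ddt u$ and $\beta(\dt u)^2$ in these norms close.

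Finally, a contraction estimate in a weaker norm (one fewer time derivative) shows that $\Phi$ is a contraction on the small ball for $\rho_h$ sufficiently small, producing a unique fixed point on $[0,T]$; an energy estimate for the difference of two solutions then yields uniqueness at the full regularity. The main obstacle is twofold: keeping the quasilinear coefficient $1 - 2\beta u$ uniformly bounded away from zero, which forces the smallness assumption $\|(g_0,g_1,h)\|_{\Gm} \leq \rho_h$; and propagating the full $m$th-order compatibility through each iterate so that Appendix \ref{sec_apdx} remains applicable at every step. The latter is exactly what the recursive definition (\ref{def_gl_nl}) is designed for, ensuring that the time traces $\dt^l v^{(n+1)}(0)$ forced on $\partial\Omega$ by the iteration match those dictated by the PDE and the boundary data $h$.
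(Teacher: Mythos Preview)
Your proposal is correct and follows essentially the same route as the paper: extend $h$ to the interior, subtract to homogenize the Dirichlet condition, then run a Banach fixed-point argument on the linearized problem using Proposition~\ref{pp_energy}, with Lemma~\ref{lm_compt} handling the propagation of compatibility through the iterates. One simplification worth noting: you suggest contracting in a weaker norm, but in fact the strong damping buys a derivative in the linear estimate ($\|w\|_{Z^m}\le C\|F\|_{Z^{m-1}}$), so the paper contracts directly in the full $Z^m$ norm without loss.
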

\begin{proof}
Note that for $h \in C^{2m + 2}((0,T) \times \partial \Omega)$, there exists a function $u_h \in C^{2m+1}([0, T] \times \bar{\Omega})$ such that
    \[
    u_h|_{\partial \Omega} = h
   \quad \text{with} \quad
    \| u_h (t)\|_{C^{2m+2}([0,T] \times \bar{\Omega})} \leq
    C \|h (t)\|_{C^{2m + 2}([0,T] \times \partial \Omega)}.
    \]
   In particular, for $\rho > 0$ to be specified later, we choose $\rho_h$ small enough such that
    \[
    \|u_h\|_{C^{2m + 2}([0,T] \times \bar{\Omega})} + \|g_0\|_{H^{2m+1}}+ \|g_1\|_{H^{2m-1}}\leq \rho.
    \]
    Now we consider $w = u - u_h$ and we rewrite the nonlinear term as
    \[
    \beta\ddt (u^2)
    = 2\beta(w + u_h)\ddt w
    + 2 \beta( w + u_h)\dt^2 u_h
    + 2 \beta (\dt (w + u_h))^2.
    \]
    Then $w$ solves the initial value problem 
    \begin{equation}\label{eq_w}
        \begin{aligned}
            (1-2\beta(w + u_h))\ddt w - \Laplace w - \dt \Laplace w  + q(t,x)w &= F(u_h, w), & \  & \mbox{on } (0,T)\times \Omega,\\
            w(t,x) &= 0, & \ &\mbox{for  } x\in \partial \Omega,\\
            w = \tg_0(x),  \quad  \partial_t u &= \tg_1(x), & \ &\mbox{for  } t=0,
        \end{aligned}
    \end{equation}
    where we write
    \begin{align*}
        &\tg_0(x) = g_0(x) -  u_h(0), \qquad \tg_1(x) = g_1(x)- \dt u_h(0),\\
        &F(u_h, w) = -Pu_h
        + 2\beta (\dt (w + u_h))^2
        + 2 \beta( w + u_h)\dt^2 u_h.
    \end{align*}
    Further, we define $\tg_l = g_l - \dt^l u_h(0)$, for $l = 2, \ldots, m$.
    With $(g_0, g_1, h)\in \Gm$, we have \[
    \tg_l \in H_0^1(\Omega) \cap H^{2(m-l)+1}(\Omega).
    \]

    In the following, we choose fixed $r_0 > 0$ to satisfy Proposition \ref{pp_energy}.
    For $r < \epv/2$ to be specified later, we define the set $\tzm(r, T)$, which containing all the functions $w$ such that
    \begin{align*}
        &w \in \bigcap_{k=0}^{m} C^{m-k}([0,T]; H^{2k+1}(\Omega)), \quad \text{with }
        \|w\|_\zm \leq r \text{ and }  \dt^l w(0) = \tg_l, \text{ for } l = 0, \ldots, m.
    \end{align*}
    We consider the linearized problem, i.e., for fixed $v \in \tzm(r, T)$, we consider
    \begin{equation}\label{eq_linear_w}
        \begin{aligned}
            (1-2\beta (v + u_h)) \ddt w - \Delta w - \dt \Delta w + q(t,x)w  &= F(u_h, v), & \  & \mbox{on } (0,T)\times \Omega,\\
            w(t,x) &= 0, & \ &\mbox{for  } x\in \partial \Omega,\\
            w = \tg_0(x),  \quad  \partial_t w & = \tg_1(x), & \ &\mbox{for  } t=0.
        \end{aligned}
    \end{equation}
    Let ${J}$ be the operator that maps $v$ to the solution $w$.
    Then with $u_h \in C^{2m+2}([0, T] \times \bar{\Omega})$ and $v \in \zm(r,T)$, we have
    \begin{align*}
        &\|F\|_\zmm \leq C(\|u_h\|_{C^{2m+2}([0, T] \times \bar{\Omega})} + (\|v\|_\zmm + \|u_h\|_{C^{2m+2}([0, T] \times \bar{\Omega})})^2),
    \end{align*}
    where we use \cite[Claim 1]{Uhlmann2021a}.
    According to Lemma \ref{lm_compt} in {Appendix} \ref{sec_apdx}, the data $\tg_0, \tg_1, F$ satisfy the $m$th-order compatibility condition (\ref{def_comp_linear}) for the linearized equation (\ref{eq_linear_w}).
    Then by Proposition \ref{pp_energy},
    there exists a unique solution
    \[
    w \in \bigcap_{k=0}^{m} C^{m-k}([0,T]; H^{2k+1}(\Omega))
    \]
    such that
    \begin{align}\label{eq_fandu}
        \| w \|_{Z^m}
        \leq C (\|u_h\|_{C^{2m+2}([0, T] \times \bar{\Omega})} & + \|g_0\|_{H^{2m+1}}+ \|g_1\|_{H^{2m-1}}\\
        & +(\|v\|_\zmm + \|u_h\|_{C^{2m+2}([0, T] \times \bar{\Omega})})^2) ,\nonumber
    \end{align}
    for a new constant $C$ independent of $r$ and $\rho$.
    For
    $r < \min\{\epv/2, 1, {1}/{(5C)}\}$ and $\rho = r^2$,
    the above inequality implies
    \[
    \| w \|_{Z^m} \leq C(\rho + (r + \rho)^2) \leq 5C r^2 \leq r.
    \]
    Thus  ${J}$ maps $\tzm(r, T)$ to itself.

    In the following, we prove ${J}$ is a contraction for sufficiently small $r$.
    Indeed, for $w_j = {J}(v_j)$ with $v_j \in \tzm(r, T)$, we write $w_0 = u_2 - u_1$ and it satisfies
    \begin{align*}
        &(1 - 2\beta (v_1 + u_h)) \ddt w_0  - \Laplace w_0- \dt (\Laplace)  w_0 + q(t,x) w_0\\
        =&  2\beta(v_2 - v_1) \ddt w_2 + 2 \beta (\dt v_2 - \dt v_1)(\dt v_2 + \dt v_1 + 2\dt u_h) + 2\beta(v_2 - v_1) \dt^2 u_h.
    \end{align*}
    We denote the right-hand side by ${I}$ and compute
    \begin{align*}
        \|{I}\|_\zmm &\leq C' (\|w_2\|_\zm + \|v_1\|_\zm + \| v_2\|_\zm + \|u_h\|_{C^{2m+2}([0, T] \times \bar{\Omega})})\|v_1 - v_2\|_\zm,
    \end{align*}
    for some positive constant $C'$ independent of $r$ and $\rho$,
    again by \cite[Claim 1]{Uhlmann2021a}.
    Note that previously we choose $r$ small enough such that $\rho = r^2 \leq  r$ and $\|u_j\|_\zm \leq r$, for $j =1,2$.
    Then we have
    \[
    \|{I}\|_\zmm \leq 4C'r \|v_1 - v_2\|_\zm.
    \]
    By Proposition \ref{pp_energy},
    one obtains
    \begin{align*}
        &\| w_2 - w_1 \|_\zm \leq C \|{I}\|_\zmm
        \leq 4CC'r \|v_1 - v_2\|_\zm.
    \end{align*}
    If we additionally assume $r < 1/(4CC')$, then
    \[
    \|{J}(v_2 -v_1)\|_\zm < \|v_2 - v_1\|_\zm,
    \]
    which proves that ${J}$ is a contraction.
    Thus, there exists a unique solution $\widetilde{w}$ as the fixed point of $J$ in $\tzm(r, T)$, which is the solution to the nonlinear problem (\ref{eq_w}).
    Moreover,
    if we choose $r$ small enough such that
    $C r \leq 1/4$ in (\ref{eq_fandu}), then we have
    \[
    \|\tilde{w}\|_\zm \leq 2C (\|u_h\|_{C^{2m+2}([0, T] \times \bar{\Omega})} + \|g_0\|_{H^{2m+1}}+ \|g_1\|_{H^{2m-1}}).
    \]
    This implies there exists a unique solution $\tilde{u}$ to the  initial-boundary value problem (\ref{eq_nonlinear_bvp}) in $\tzm$ satisfying
    \[
    \|\tilde{u}\|_\zm \leq C (\|u_h\|_{C^{2m+2}([0, T] \times \bar{\Omega})} + \|g_0\|_{H^{2m+1}}+ \|g_1\|_{H^{2m-1}}),
    \]
    where $C$ is a new constant depending on $T$, $\beta$, $q$, and the domain $\Omega$.
\end{proof}

\section{Preliminaries}\label{sec_prelim}
In this section, for convenience, we use $x$ to denote a point in $\Rn$
and $P(D)$ to denote a differential operator, where $D = -{i}\partial_x$.
Let $\alpha = (\alpha_1, \ldots, \alpha_n)$ be a multi-index and we write
$
\partial^\alpha = \partial_1^{\alpha_1} \ldots\partial_n^{\alpha_n}.
$
To apply these results to our model, we actually abuse the notation by considering $(t,x) \in \mathbb{R} \times \Rn$ and a differential operator $P(D) = P(D_t, D_x)$.
\subsection{The spaces $\Bpk$}
Let $H^s(\mathbb{R}^n)$ be the Sobolev space.
To describe the regularity of fundamental solutions, we introduce spaces of distributions which generalize the $H^s$ spaces, for more details see \cite[Section 10.1]{H2005analysis}.
We briefly recall some definitions and results in the following.

Let $k$ be a positive function in $\mathbb{R}^n$.
We say $k$ is a \textbf{temperate weight function} if there exist positive constants $C$ and $N$ such that
\[
k(\xi + \eta) \leq (1+ C|\xi|)^N k(\eta), \quad \xi, \eta \in \mathbb{R}^n.
\]
For example, the function $k_s(\xi) = (1+ |\xi|^2)^{{s}/{2}}$ is a temperate weight that corresponds to $H^s(\Rn)$.
Another example is the function $\tP$ define by
\[
\tP(\xi)^2 = \sum_{|\alpha| \geq 0 } |P^{(\alpha)}(\xi)|^2, \quad \xi \in \Rn
\]
where $P(\xi)$ is a polynomial
and we write $P^{(\alpha)}(\xi) \coloneqq \partial^\alpha P(\xi)$.
One can show by Taylor expansion that there is a positive constant $C$ only depending on $\deg P$ and $n$ such that
\begin{align}\label{eq_tP}
\tP(\xi + \eta) \leq C(1+ |\xi|)^m \tP(\eta).
\end{align}
Thus, it is a temperate weight function.
\begin{df}[{\cite[Definition 10.1.6]{H2005analysis}}]
For a temperate weight function $k(\xi)$ and $1 \leq p \leq \infty$, we denote by $B_{p, k}(\mathbb{R}^n)$ the set of all distributions $u \in \mathscr{S}'$ such that $\hat{u}$ is a function and
\begin{align}\label{def_Bpk}
\|u \|_{p,k} = ((2\pi)^{-n} \int |k(\xi)\hat{u}(\xi)|^p \diff \xi)^{1/p} \leq \infty.
\end{align}
When $p = \infty$, we define $\|u \|_{\infty,k} = \mathrm{ess.\ sup} |k(\xi)\hat{u}(\xi)|$.
\end{df}
One can show $B_{p, k}(\mathbb{R}^n)$ is a Banach space with the norm (\ref{def_Bpk}).  Moreover, we have
$
\mathscr{S}(\mathbb{R}^n) \subset B_{p, k}(\mathbb{R}^n) \subset \mathscr{S}'(\mathbb{R}^n),
$
with $C_0^\infty(\mathbb{R}^n)$ dense in $\Bpk(\mathbb{R}^n)$ for $p < \infty$.
When $p = 2$, one has
\[
\|u \|_{2,\tP} = (\sum_{|\alpha| \geq 0} \| P^{(\alpha)}(D) u\|_{L^2(\mathbb{R}^n)}^2)^{1/2} .
\]
In particular, we have $\|u \|_{2,1} = \|u\|_{\ltwo(\Rn)}$.
Additionally, we have the following properties.
\begin{pp}[{\cite[Theorem 10.1.12]{H2005analysis}}]\label{pp_convl}
If $u_1 \in B_{p, k_1}(\mathbb{R}^n) \cap \mathcal{E}'(\mathbb{R}^n)$ and $u_2 \in B_{\infty, k_2}(\mathbb{R}^n)$, it follows that $u_1 \ast u_2 \in B_{p, k_1k_2}(\mathbb{R}^n)$ and we have the estimate
\begin{align*}
    \| u_1 \ast u_2 \|_{p, k_1k_2} \leq \|u_1\|_{p, k_1} \|u_2\|_{\infty, k_2}.
\end{align*}
\end{pp}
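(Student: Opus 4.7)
The plan is to reduce the statement to a pointwise bound on the Fourier transform side, exploiting that $B_{p,k}$ is defined via a weighted $L^p$ condition on $\hat{u}$ and that the Fourier transform converts convolution into multiplication. Because $u_2$ lies in an $L^\infty$-type space $B_{\infty,k_2}$, the bound should come out as a direct H\"older-type estimate with no need to shift weights.

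First I would note that, since $u_1 \in \mathcal{E}'(\mathbb{R}^n)$ has compact support, the convolution $u_1 \ast u_2 \in \mathscr{S}'(\mathbb{R}^n)$ is well defined, and the standard identity $\widehat{u_1 \ast u_2} = \hat{u}_1 \hat{u}_2$ holds. By the Paley--Wiener theorem $\hat{u}_1$ is the restriction to $\mathbb{R}^n$ of an entire function of exponential type, and $\hat{u}_2$ is a measurable function by hypothesis, so the product $\hat{u}_1 \hat{u}_2$ is again a measurable function. To conclude $u_1 \ast u_2 \in B_{p,k_1 k_2}$ it therefore suffices to verify $(k_1 k_2)\,\hat{u}_1 \hat{u}_2 \in L^p(\mathbb{R}^n)$ with the claimed norm bound.

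Next I would apply the a.e.~pointwise estimate $|k_2(\xi)\hat{u}_2(\xi)| \leq \|u_2\|_{\infty,k_2}$, which is essentially the definition of the $B_{\infty, k_2}$ norm. Multiplying by $|k_1(\xi)\hat{u}_1(\xi)|$ yields
\[
|k_1(\xi) k_2(\xi) \hat{u}_1(\xi) \hat{u}_2(\xi)| \leq \|u_2\|_{\infty, k_2}\, |k_1(\xi)\hat{u}_1(\xi)| \quad \text{for a.e.\ } \xi \in \mathbb{R}^n.
\]
Raising to the $p$th power, integrating against $(2\pi)^{-n}\, d\xi$, and taking $p$th roots produces exactly $\|u_1 \ast u_2\|_{p,k_1 k_2} \leq \|u_1\|_{p,k_1}\|u_2\|_{\infty,k_2}$ for $1 \leq p < \infty$; the case $p = \infty$ is handled identically with an essential supremum replacing the $L^p$ norm.

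The only real subtlety is justifying the convolution--multiplication identity at this level of generality, but this is classical once one of the factors is compactly supported. It is worth emphasizing that the temperate weight property of $k_1, k_2$ is never invoked here; it would become indispensable only in the companion Young-type estimate where $u_2 \in B_{p',k_2}$ with $p' < \infty$, in which case one must translate the weight in $\xi$ and absorb the resulting polynomial factor using $k(\xi+\eta) \leq (1 + C|\xi|)^N k(\eta)$.
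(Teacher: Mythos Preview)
Your argument is correct and is essentially the standard proof of this fact; the paper itself does not supply a proof but simply cites the result as \cite[Theorem 10.1.12]{H2005analysis}. The key points you identify---that $u_1\in\mathcal{E}'$ makes the convolution well defined in $\mathscr{S}'$ with $\widehat{u_1\ast u_2}=\hat u_1\hat u_2$, and that the $B_{\infty,k_2}$ membership gives the pointwise bound $|k_2\hat u_2|\le\|u_2\|_{\infty,k_2}$ which combines multiplicatively with the $L^p$ bound on $k_1\hat u_1$---are exactly the ones used in H\"ormander's proof, and your closing remark that the temperate-weight hypothesis is not actually used in this particular estimate is also accurate.
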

\begin{pp}[{\cite[Theorem 10.1.15]{H2005analysis}}]\label{pp_psiu}
    If $u \in B_{p, k}(\mathbb{R}^n)$ and $\psi \in \mathscr{S}(\mathbb{R}^n)$, it follows that $\psi u \in B_{p, k}(\mathbb{R}^n)$ with
    \begin{align*}
        \| \psi u \|_{p, k} \leq \|\psi\|_{1, M_k} \|u\|_{p, k},
    \end{align*}
where $M_k(\xi)  = \sup_{\eta\in \mathbb{R}^n} \frac{k(\xi + \eta)}{k(\eta)}$ is a temperate weight function induced by $k$.
\end{pp}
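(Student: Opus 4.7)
The plan is to reduce the estimate to Young's convolution inequality on the Fourier side. Since $\psi u$ is a pointwise product and $\psi \in \mathscr{S}(\Rn)$, its Fourier transform is the convolution $\widehat{\psi u} = (2\pi)^{-n}\,\hat\psi \ast \hat u$, which is well defined pointwise because $\hat\psi \in \mathscr{S}(\Rn)$ and $\hat u$ is a function by the hypothesis $u \in \Bpk(\Rn)$. The remaining task is to move the weight $k(\xi)$ inside the convolution and absorb one copy into $\hat\psi$ via $M_k$.

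Specifically, I would start from the elementary bound $k(\xi) = k((\xi-\eta)+\eta) \leq M_k(\xi-\eta)\,k(\eta)$, which is just a restatement of the definition of $M_k$. Plugging this into the convolution representation of $\widehat{\psi u}$ yields the pointwise inequality $k(\xi)\,|\widehat{\psi u}(\xi)| \leq \bigl((2\pi)^{-n} M_k\,|\hat\psi|\bigr) \ast \bigl(k\,|\hat u|\bigr)(\xi)$. Applying Young's inequality $\|f \ast g\|_{L^p} \leq \|f\|_{L^1}\|g\|_{L^p}$ and tracking the $(2\pi)^{-n/p}$ factors that appear in the normalization of $\|\cdot\|_{p,k}$ then produces exactly $\|\psi u\|_{p,k} \leq \|\psi\|_{1, M_k}\,\|u\|_{p,k}$. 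The case $p = \infty$ is handled in the same way, using $\|f \ast g\|_{L^\infty} \leq \|f\|_{L^1}\|g\|_{L^\infty}$.

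Two auxiliary points would need to be settled along the way, neither of which is a real obstacle. First, one must verify that $M_k$ itself is a temperate weight, so that $\|\psi\|_{1, M_k}$ is a finite quantity for any $\psi \in \mathscr{S}(\Rn)$; this follows from the submultiplicativity $M_k(\xi_1 + \xi_2) \leq M_k(\xi_1)\,M_k(\xi_2)$ (obtained by telescoping inside the supremum defining $M_k$) together with the temperate bound on $k$, which gives $M_k(\xi_1) \leq (1+C|\xi_1|)^N$. Second, the convolution identity $\widehat{\psi u} = (2\pi)^{-n}\hat\psi \ast \hat u$ should be justified as an equality of functions: the right-hand side is pointwise defined because $\hat\psi \in \mathscr{S}(\Rn)$ and $\hat u \in L^p_{\mathrm{loc}}$ (the temperate weight $k$ is bounded below on compact sets), and it agrees with the standard multiplication formula on $\mathscr{S}'(\Rn)$; for $p < \infty$ one can alternatively invoke the density of $C_0^\infty$ in $\Bpk(\Rn)$ to verify everything on Schwartz functions first and pass to the limit. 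The proposition is, in essence, Young's convolution inequality dressed up to carry the weight $k$, and the substantive content is already captured by the introduction of the auxiliary weight $M_k$.
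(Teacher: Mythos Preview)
Your argument is correct and is precisely the standard proof of this result (as in H\"ormander, Theorem 10.1.15): take the Fourier transform of the product, bound $k(\xi) \leq M_k(\xi-\eta)k(\eta)$ inside the convolution, and apply Young's inequality, with the $(2\pi)^{-n}$ factors matching up exactly. The paper does not supply its own proof of this proposition---it is quoted directly from H\"ormander---so there is nothing further to compare.
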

Next, let $X$ be an open subset of $\mathbb{R}^n$ and we define $\Bpkloc(X)$ as local spaces containing $u \in \mathcal{D}'(X)$ such that for every $x_0 \in X$, there exists $\phi \in C_0^\infty(X)$ with $\phi(x_0) \neq 0$ and $\phi u \in \Bpk(\mathbb{R}^n)$.
With this notation, we have the following result for fundamental solutions to differential operators with constant coefficients.


\begin{pp}[{\cite[Theorem 10.2.1]{H2005analysis}}]\label{pp_E0}
Let $P(D)$ be a partial differential operator with constant coefficients which is not equal to $0$. Then there is a fundamental solution $E_0 \in \BinPloc(\mathbb{R}^n)$ such that $PE_0 = \delta$, where $\delta$ is the Dirac measure at $0$.
In particular, we have
\begin{align}\label{eq_E0norm}
\|E_0/(\cosh|x|)\|_{\infty, \tP} \leq C,
\end{align}
for a positive constant $C$ only depending on $\deg P$ and $n$.
\end{pp}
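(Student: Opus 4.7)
The plan is to construct $E_0$ by a shifted-contour variant of Fourier inversion $E_0 = \mathcal{F}^{-1}(1/P)$, in which the integration contour is pushed into $\mathbb{C}^n$ by a $\xi$-dependent imaginary amount $i\eta(\xi)$ with $|\eta(\xi)|\leq 1$, chosen so that the denominator $P(\xi+i\eta(\xi))$ stays away from zero. The fact that the shift is universally bounded by $1$ is exactly what produces the weight $\cosh|x|$ in the final estimate.

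The central technical step I would carry out first is a minimum-modulus inequality for the polynomial $P$: there exist points $\eta_1,\dots,\eta_N\in\overline{B(0,1)}\subset\Rn$ and a constant $c>0$, all depending only on $\deg P$ and $n$, such that
\[
\max_{1\leq j\leq N}|P(\xi+i\eta_j)|\geq c\,\tP(\xi), \qquad \xi\in\Rn.
\]
The proof is a finite-dimensional norm-equivalence: on the space of polynomials in $z\in\mathbb{C}^n$ of degree $\leq\deg P$, the Hilbert norm $(\sum_\alpha|\partial^\alpha Q(0)|^2)^{1/2}$ is equivalent to $\max_j|Q(i\eta_j)|$ for any fixed set $\{\eta_j\}$ of sufficiently general points in the unit ball in $\Rn$ (a nonzero polynomial cannot vanish on such a set, by real-analyticity on $i\Rn$); apply this to the translated polynomial $Q(z):=P(\xi+z)$. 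The sets $A_j:=\{\xi:|P(\xi+i\eta_j)|\geq c\tP(\xi),\ |P(\xi+i\eta_k)|<c\tP(\xi)\ \forall k<j\}$ then form a measurable partition of $\Rn$ on which a single constant shift $i\eta_j$ works.

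With the partition in hand I would define, for $\phi\in C_0^\infty(\Rn)$,
\[
\langle E_0,\phi\rangle := \sum_j (2\pi)^{-n}\int_{A_j}\frac{\hat\phi(-(\xi+i\eta_j))}{P(\xi+i\eta_j)}\,d\xi,
\]
where $\hat\phi$ is extended to an entire function of exponential type on $\mathbb{C}^n$ via Paley--Wiener; each integrand decays faster than any polynomial in $\xi$ and the integral converges absolutely. The identity $PE_0 = \delta$ is then checked by pairing $E_0$ with $P(-D)\phi$: the identity $\widehat{P(-D)\phi}(\zeta)=P(\zeta)\hat\phi(\zeta)$ cancels the denominator and leaves $\sum_j(2\pi)^{-n}\int_{A_j}\hat\phi(-(\xi+i\eta_j))d\xi$; a mollification of the rough partition followed by Stokes' theorem on the tube $\{\xi+it\eta(\xi):t\in[0,1]\}$ absorbs the imaginary shift back onto $\Rn$ and yields $(2\pi)^{-n}\int_\Rn\hat\phi(-\xi)d\xi = \phi(0)$, using the rapid real-direction decay of $\hat\phi$ to kill the side terms at infinity.

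For the weighted estimate, set $G(x):=1/\cosh|x|$; then $G\in\mathscr{S}(\Rn)$ and $\hat G$ extends holomorphically to the tube $|\operatorname{Im}\tau|<1$ because $G$ decays like $e^{-|x|}$. Since $|e^{-\eta_j\cdot x}|\leq e^{|x|}\leq 2\cosh|x|$, the contour-shifted representation of $E_0$ passes cleanly through multiplication by $G$, and a Fubini computation gives
\[
\widehat{GE_0}(\zeta)=\sum_j(2\pi)^{-n}\int_{A_j}\frac{\hat G(\zeta-\xi-i\eta_j)}{P(\xi+i\eta_j)}\,d\xi.
\]
Multiplying by $\tP(\zeta)$ and invoking the weight inequality \eqref{eq_tP}, namely $\tP(\zeta)\leq C(1+|\zeta-\xi|)^{\deg P}\tP(\xi)$, together with the minimum-modulus bound $|P(\xi+i\eta_j)|^{-1}\leq c^{-1}\tP(\xi)^{-1}$ on $A_j$, reduces the bound to $C\int(1+|\zeta-\xi|)^{\deg P}|\hat G(\zeta-\xi-i\eta_j)|\,d\xi$, which is uniform in $\zeta$ by the Schwartz decay of $\hat G$ on its strip of holomorphy. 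The main obstacle throughout is the minimum-modulus lemma: it is what pins the final constant to depend only on $\deg P$ and $n$, and the restriction $|\eta_j|\leq 1$ is precisely what forces the weight to be $\cosh|x|$ rather than a larger exponential; once that polynomial inequality is in place, the remaining construction and Fourier analysis are fairly standard.
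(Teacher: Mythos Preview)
The paper does not prove this proposition; it is stated as a citation of Theorem~10.2.1 in H\"ormander's book and used as a black box in the proof of Proposition~\ref{pp_Eestimate}. Your sketch is exactly the classical H\"ormander construction --- the minimum-modulus lemma giving finitely many imaginary shifts $\eta_j$ of size at most $1$, the shifted-contour Fourier inversion on a measurable partition $\{A_j\}$, and the convolution bound for $\widehat{GE_0}$ --- so there is nothing different to compare.

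One small inconsistency worth fixing: you place the shifts in the closed ball $\overline{B(0,1)}$ but then invoke Schwartz-type decay of $\hat G$ on the \emph{open} tube $|\mathrm{Im}\,\tau|<1$. Since $e^{|x|}/\cosh|x|$ is merely bounded rather than decaying, the integration-by-parts that produces real-direction decay of $\hat G$ becomes marginal at $|\mathrm{Im}\,\tau|=1$; take the $\eta_j$ strictly inside the ball (the norm-equivalence argument for the minimum-modulus lemma works equally well there, and the constants still depend only on $n$ and $\deg P$). Also, in the verification of $PE_0=\delta$, the mollification step should be understood as applied \emph{after} the denominator is cancelled --- the entire integrand $\hat\phi(-\zeta)$ then has no zeros to avoid, so the smoothed cycle can be homotoped to $\Rn$ by Stokes without worrying about $P$ vanishing along the interpolation. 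With these clarifications the argument is correct.
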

Furthermore, we prove the following proposition, by modifying
{\cite[Theorem 1.2]{isakov1991completeness}} based on
{\cite[Theorem 10.3.7]{H2005analysis}}.
\begin{pp}\label{pp_Eestimate}
Let $X$ be a bounded open set in $\mathbb{R}^n$ with smooth boundary.
There exists a bounded linear operator $E$  such that
\[
PE f = f, \quad \text{ for any } f \in H^s(X), \text{ where } s\geq0 \text{ is an integer,}
\]
and for any differential operator $Q$ with constant coefficients we have
\begin{align}\label{eq_Eestimate}
    \|Q(D) Ef\|_{H^s(X)} \leq C \sup_{\xi \in \mathbb{R}^n} \frac{\widetilde{Q}(\xi)}{\tP(\xi)} \|f\|_{H^s(X)},
\end{align}
where $C$ depends only on $\deg P$, $n$, and $X$.
\end{pp}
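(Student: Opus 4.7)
The plan is to build $E$ as convolution against a localized version of the Hörmander fundamental solution $E_0$ from Proposition \ref{pp_E0}, applied to a Sobolev extension of $f$, and then verify both identities by combining the convolution estimate in Proposition \ref{pp_convl} with the multiplication estimate in Proposition \ref{pp_psiu}.

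First I would choose a cutoff $\chi\in C_0^\infty(\mathbb{R}^n)$ with $\chi\equiv 1$ on an open neighborhood $U$ of the difference set $\overline{X-X}=\{x-y:x,y\in\overline{X}\}$. The function $\chi\cosh|\cdot|$ is then Schwartz, so writing $\chi E_0 = (\chi\cosh|\cdot|)\cdot (E_0/\cosh|\cdot|)$ and combining the weight bound \eqref{eq_E0norm} with Proposition \ref{pp_psiu} gives
\[
\|\chi E_0\|_{\infty,\tP}\le C_0,
\]
for a constant depending only on $\deg P$, $n$, and $X$ (through the choice of $\chi$). Next, fix a bounded linear extension operator $\mathrm{Ext}:H^s(X)\to H^s(\mathbb{R}^n)$ whose image is supported in a fixed compact set $K\supset\overline X$; set $\tilde f=\mathrm{Ext}(f)$ and define
\[
Ef := \bigl((\chi E_0)\ast \tilde f\bigr)\big|_X.
\]
This $E$ is linear and bounded on $H^s(X)$, and is what I will show satisfies both conclusions.

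To check $PEf=f$ on $X$, note that $P(\chi E_0)=\chi PE_0+R=\delta+R$, where $R$ is a distribution supported in $\mathrm{supp}\,\nabla\chi\subset\mathbb{R}^n\setminus U$. For $x\in X$ and $y\in\mathrm{supp}\,\tilde f\subset K$ one has $x-y\in X-K$, which lies in $U$ after enlarging $K$ and $\chi$ slightly; hence $R\ast\tilde f$ vanishes on $X$ and $PEf=\delta\ast\tilde f=\tilde f=f$ on $X$.

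For the quantitative bound, I would use that $P$, $Q$, and $\partial^\alpha$ all have constant coefficients and therefore commute. For any multi-index $\alpha$ with $|\alpha|\le s$,
\[
\partial^\alpha Q(D)Ef=(\chi E_0)\ast\bigl(Q(D)\partial^\alpha \tilde f\bigr).
\]
Apply Proposition \ref{pp_convl} with $u_1=\partial^\alpha \tilde f\in B_{2,1}\cap\mathcal{E}'$ and $u_2=\chi E_0\in B_{\infty,\tP}$ to obtain
\[
\|(\chi E_0)\ast\partial^\alpha \tilde f\|_{2,\tP}\le \|\partial^\alpha\tilde f\|_{L^2}\,\|\chi E_0\|_{\infty,\tP}.
\]
Using the pointwise inequality $\widetilde{Q}(\xi)\le\bigl(\sup_\xi \widetilde Q/\tP\bigr)\tP(\xi)$ on the Fourier side,
\[
\|Q(D)\partial^\alpha Ef\|_{L^2(\mathbb{R}^n)}\le \sup_{\xi\in\mathbb{R}^n}\frac{\widetilde Q(\xi)}{\tP(\xi)}\,\|(\chi E_0)\ast\partial^\alpha\tilde f\|_{2,\tP}.
\]
Combining the two displays, summing over $|\alpha|\le s$, and using boundedness of the extension operator then yields the desired estimate \eqref{eq_Eestimate}. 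The main technical point is the first paragraph: getting a usable bound on $\chi E_0$ in $B_{\infty,\tP}$ despite the $\cosh|x|$ growth in \eqref{eq_E0norm} and arranging $\chi$ so that the error $R$ disappears after convolution with any $\tilde f$ supported near $\overline X$; once that is set up, everything else is bookkeeping with Propositions \ref{pp_convl} and \ref{pp_psiu}.
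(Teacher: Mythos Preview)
Your proposal is correct and follows essentially the same route as the paper: localize $E_0$ by a cutoff equal to $1$ on a neighborhood of $\overline{X-X}$, bound $\|\chi E_0\|_{\infty,\tP}$ via Proposition~\ref{pp_psiu} and \eqref{eq_E0norm}, convolve against a compactly supported Sobolev extension of $f$, and finish with Proposition~\ref{pp_convl} and the pointwise bound $|Q(\xi)|\le(\sup\widetilde Q/\tP)\,\tP(\xi)$. The only cosmetic difference is that the paper defines $Ef=(E_0*f_0)|_X$ and brings in the cutoff $\psi$ (your $\chi$) only for the estimate, implicitly using that $E_0*f_0=(\psi E_0)*f_0$ on $X$, whereas you build $E$ from the truncated kernel $\chi E_0$ and verify $PEf=f$ by showing the commutator term $R*\tilde f$ vanishes on $X$.
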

\begin{proof}
When $s \geq 0$ is an integer, recall $H^s(X)$ consist of all $f \in L^2(X)$ such that $\partial^\alpha u \in L^2(X)$, for any multi-index $\alpha = (\alpha_1, \ldots, \alpha_n)$ with $|\alpha| \leq s$.
Moreover, for $f \in H^s(X)$,
there exists $f_0 \in H^s(\mathbb{R}^n)$ such that $\|f_0\|_{H^s(\mathbb{R}^n)} \leq C \|f\|_{H^s(X)}$ with $f_0 = f$ in $X$.

Let $Ef$ be the restriction of $u_0 = E_0 \ast f_0$ to $X$, where $E_0$ is a fundamental solution given by Proposition \ref{pp_E0}.
It follows that $PEf = f$.
To prove that $Q(D)E$ is bounded, we consider a smooth cutoff function $\psi \in C_0^\infty(X)$ with $\psi = 1$ in a neighborhood of the closure of $X- X = \{x-y: x, y \in X\}$.
We set $F_0 = \psi E_0$ and $g(x) = \cosh(x)$.
Then by Proposition \ref{pp_psiu}, we have $F_0 \in \BinP(\Rn)$ with
\begin{align*}
\|F\|_{\infty, \tP}  = \|(\psi g) (E_0/g)\|_{\infty, \tP}  \leq
\|\psi g\|_{1, M_{\tP}} \|E_0/g\|_{\infty, \tP}.
\end{align*}
Here by (\ref{eq_tP}) there exists a constant $C> 0$ only depending on $\deg P$ and $n$ such that
\[
M_{\tP}(\xi)  = \sup_{\eta\in \mathbb{R}^n} \frac{\tP(\xi + \eta)}{\tP(\eta)} \leq C(1+ |\xi|)^m,
\]
and therefore $\|\psi g\|_{1, M_{\tP}}$ is bounded.
Combining it with (\ref{eq_E0norm}), we have
$\|F\|_{\infty, \tP}  \leq C$, for a new constant $C$ only depending on $\psi$, $\deg P$, $n$, $X$.
Hence, for any multi-index $\alpha$ with $|\alpha| \leq s$, we have
\begin{align*}
\| \partial^\alpha (Q(D) Ef)\|_{L^2(X)} = \|  Q(D) \partial^\alpha (Ef)\|_{L^2(X)} \leq \| Q(D) \partial^\alpha (F_0 \ast f_0)\|_{2, 1}.
\end{align*}
Note $F_0$ has compact support and therefore $Q(D)\partial^\alpha (F_0 \ast f_0) = (Q(D)F_0) \ast \partial^\alpha f_0$.
This implies
\begin{align*}
    \| \partial^\alpha (Q(D) Ef)\|_{L^2(X)}
    &\leq \| Q(D)  (F_0 \ast \partial^\alpha f_0)\|_{2, 1}
    \leq \|\partial^\alpha f_0\|_{2,1} \|Q(D) F_0\|_{\infty, 1}\\
    &\leq C \|f\|_{H^s(X)} \|Q(D) F_0\|_{\infty, 1}
    \leq C \|f\|_{H^s(X)} \| F_0\|_{\infty, \tP}\sup_{\xi \in \mathbb{R}^n} \frac{\widetilde{Q}(\xi)}{\tP(\xi)},
\end{align*}
where we use Proposition \ref{pp_convl} and the last inequality comes from the estimate
\[
|\widehat{Q(D)F_0}(\xi)|  = |Q(\xi) \widehat{F}_0(\xi)| \leq
\sup_{\xi \in \mathbb{R}^n} \frac{\widetilde{Q}(\xi)}{\tP(\xi)}
| \tP(\xi)\widehat{F}_0(\xi)|.
\]
Thus, we have the desired estimate. 
\end{proof}

\subsection{The characteristic Cauchy problem}\label{subsec_cauchy}
In Section \ref{sec_gocnew}, we would like to show the reminder term is relatively small with respect to a large parameter, and moreover it is supported in { a half space $\{t \geq \vep\}$}, for some small number $\vep > 0$.
This enables us to obtain solutions satisfying the zero initial conditions.
For this purpose, we recall some notations and results in  \cite[Section 12.8]{H2005analysis}.

In the following, we use the notation $\check{u}(x) = u(-x)$.
By \cite[Theorem 7.1.10]{Hoermander2003}, for any $u \in \mathscr{S}'(\Rn)$, one has
\[
\doublehat{u} = (2\pi)^n \check{u},
\]
where $\hat{u} = \int e^{-i x\cdot \xi} f(x) \diff x$ is the Fourier transform.
\begin{pp}[{\cite[Theorem 10.1.14]{H2005analysis}}]\label{pp_10114}
    If $L$ is a continuous linear form on $B_{p,k}(\Rn)$ for $p < \infty$, then we have
    \[
    L(u) = \check{v}(u), \qquad u \in \mathscr{S}(\Rn),
    \]
    for some $v \in B_{p', 1/k}(\Rn)$ with ${1}/{p} + {1}/{p'} = 1$.
    The norm of this linear form is $\|v\|_{p', 1/k}$. Hence $B_{p', 1/k}(\Rn)$ is the dual space of $B_{p,k}$ and the canonical bilinear form in $B_{p,k}(\Rn) \times B_{p', 1/k}(\Rn)$ is the continuous extension of the bilinear form $\check{v}(u)$, where $v \in B_{p', 1/k}(\Rn)$ and $u \in \mathscr{S}(\Rn)$.
\end{pp}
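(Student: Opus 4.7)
The proof can be organized around the observation that $B_{p,k}(\Rn)$ is isometrically isomorphic to a weighted $L^p$ space via Fourier transform followed by multiplication by $k$. Set
\[
T : B_{p,k}(\Rn) \longrightarrow L^p\bigl(\Rn, (2\pi)^{-n}\diff \xi\bigr), \qquad Tu = k\,\hat{u}.
\]
By the very definition (\ref{def_Bpk}), $\|Tu\|_{L^p} = \|u\|_{p,k}$, and $T$ is surjective: given $w \in L^p$, the function $w/k$ is tempered (since $k$, being a temperate weight, is polynomially bounded both above and away from $0$ below), so $u = \mathcal{F}^{-1}(w/k) \in \mathscr{S}'(\Rn)$ lies in $B_{p,k}$ with $Tu = w$. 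My first step would be to establish this isomorphism carefully.

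Next I would use the standard duality $(L^p)^\ast = L^{p'}$ for $1 \leq p < \infty$: any continuous linear form $L$ on $B_{p,k}$ transfers via $T$ to a continuous linear form on $L^p\bigl(\Rn, (2\pi)^{-n}\diff \xi\bigr)$, hence is represented by a unique $h \in L^{p'}\bigl(\Rn, (2\pi)^{-n}\diff \xi\bigr)$ of equal norm, giving
\[
L(u) = (2\pi)^{-n}\int k(\xi)\,\hat{u}(\xi)\,h(\xi)\,\diff \xi.
\]
Define $v \in \mathscr{S}'(\Rn)$ by $\hat{v} = k h$; then $\hat{v}/k = h \in L^{p'}$, so $v \in B_{p',1/k}(\Rn)$ with $\|v\|_{p',1/k}=\|h\|_{L^{p'}((2\pi)^{-n}\diff \xi)}=\|L\|$.

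To identify $L(u)$ with $\check{v}(u)$, I would apply, for $u \in \mathscr{S}(\Rn)$, the Parseval/Fourier inversion identity
\[
\check{v}(u) = \int v(-x)\,u(x)\,\diff x = (2\pi)^{-n}\int \hat{v}(\xi)\,\hat{u}(\xi)\,\diff \xi,
\]
where the second equality is a direct computation after expanding $u(-x)$ via the inverse Fourier transform. Substituting $\hat{v}=kh$ recovers $L(u)$ exactly. Uniqueness of $v$ then follows from the density of $\mathscr{S}(\Rn)$ in $B_{p,k}(\Rn)$ (valid since $p<\infty$), and this density is also what justifies extending the bilinear form $(v,u)\mapsto \check{v}(u)$ from $B_{p',1/k}\times \mathscr{S}$ by continuity to the full duality pairing on $B_{p,k}\times B_{p',1/k}$.

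The only delicate point is checking that the identification $\hat{v}=kh$ really produces a bona fide tempered distribution and a corresponding element of $B_{p',1/k}$; this hinges on $1/k$ being itself a temperate weight (which follows from the definition by swapping $\xi$ and $-\xi$), so the hypothesis on $k$ is used symmetrically. Beyond that, the argument is a clean transport of the classical $(L^p)^\ast \cong L^{p'}$ duality through the Fourier-multiplier isomorphism $T$, and I anticipate no substantial additional obstacle.
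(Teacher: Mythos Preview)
The paper does not give its own proof of this proposition; it is simply quoted from H\"ormander with a citation. Your argument is correct and is essentially the standard proof (and the one H\"ormander gives): the map $u\mapsto k\hat u$ is an isometric isomorphism of $B_{p,k}$ onto $L^p\bigl(\Rn,(2\pi)^{-n}\diff\xi\bigr)$, so the Riesz representation $(L^p)^\ast=L^{p'}$ transports back to the stated duality, and the identification $\check v(u)=(2\pi)^{-n}\int\hat v\,\hat u$ follows from $\doublehat v=(2\pi)^n\check v$ together with $\int\hat w\,u=\int w\,\hat u$. There is nothing further to compare.
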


Recall $P(D)$ is a partial differential operator
with constant coefficients in $\Rn$.
Let $N = (0,  \ldots,0, 1) \in \Rn$ and for convenience we consider the half space \[
H = \{x: x_n \geq 0\} = \{x: x \cdot N \geq 0\}.
\]
We study the Cauchy problem for $P(D)$ in  $H$ with vanishing data on the boundary $\partial H$.
We suppose $\partial H$ is characteristic with respect to $P(D)$, i.e., $P_m(N) = 0$, where $P_m$ is the principal part.
By \cite[Theorem 8.6.7]{Hoermander2003}, there is no uniqueness for the Cauchy problem $(P+q) u = f$ with $f$ supported in $H$, unless growth conditions are imposed.
Our aim is to find a solution supported in $H$ with the desired estimate, following the proof of H\"{o}rmander.
First, we recall the following proposition.
\begin{pp}[{\cite[Theorem 12.8.1]{H2005analysis}}]\label{pp_1281}
    Let $P(D)$, $N$, and $H$ be defined as above.
    Then the following statements are equivalent.
    \begin{itemize}
        \item[(a)] $P(D)$ has a fundamental solution with support in $H$.
        \item[(b)]\phantomsection\label{condition_b} There exist constant $A_1$ and $A_2$ with $A_1 > 0$ such that for every solution $\sigma(\zeta)$ of $P(\zeta + \sigma N) = 0$ which is analytic and single-valued in a ball $B$ with real center and radius $A_1$, we have
        \[
        \sup_B \mathrm{Im} \sigma(\zeta) \geq A_2.
        \]
        \item[(c)] If $ 1 \leq p < \infty$ and $k$ is a temperate weight function, the equation $P(D)u = f$ has a solution $u \in \Bpkloc(\Rn)$ with support in $H$, for every $f \in \Bpkloc(\Rn)$ with support in $H$.
    \end{itemize}
\end{pp}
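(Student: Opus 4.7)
The plan is to prove the equivalence via the chain $(a) \Rightarrow (c) \Rightarrow (a)$ together with $(a) \Leftrightarrow (b)$, with the last equivalence carrying the substantive content.

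For $(a) \Rightarrow (c)$: given a fundamental solution $E$ with support in $H$ and $f \in \Bpkloc(\Rn)$ with support in $H$, I would define $u = E * f$ locally and verify the support property using that $H + H \subset H$. To check $u \in \Bpkloc(\Rn)$, I would localize by multiplying by a cutoff $\phi \in C_0^\infty(\Rn)$, then apply Proposition \ref{pp_convl} with weights $k$ and $\tP$; the bound $E \in \BinPloc(\Rn)$ from Proposition \ref{pp_E0} combined with $\phi f \in \Bpk(\Rn) \cap \mathcal{E}'$ gives the required regularity. For $(c) \Rightarrow (a)$, I would invoke the closed graph theorem to turn (c) into a uniform bound on the solution map on compactly supported data supported in $H$, then approximate the Dirac measure by a sequence $f_j \in C_0^\infty$ supported in $H \cap \{|x| < 1/j\}$, solve $P(D) u_j = f_j$ in $\Bpkloc$, and extract a weak limit using the uniform bound; the support condition passes to the limit.

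The hard direction is $(b) \Rightarrow (a)$, and this is where I expect the main obstacle. The strategy is to construct $E$ via a deformed Fourier inversion contour. Using condition (b), I would apply a Seidenberg--Tarski type argument to produce a measurable function $\tau : \Rn \to \mathbb{R}_{\geq 0}$, bounded above, such that on the shifted contour $\Gamma_\xi = \{\xi - i\tau(\xi) N : \xi \in \Rn\}$ the polynomial satisfies a uniform polynomial lower bound $|P(\xi - i\tau(\xi) N)| \geq c(1+|\xi|)^{-M}$. I would then define
\[
E(x) = (2\pi)^{-n} \int_{\Rn} \frac{e^{i x \cdot (\xi - i\tau(\xi) N)}}{P(\xi - i\tau(\xi) N)} \, d\xi,
\]
interpreted as an oscillatory integral. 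The identity $P(D) E = \delta$ follows from a contour deformation argument back to $\tau \equiv 0$, and the support property $\supp E \subset H$ comes from the decay factor $e^{-\tau(\xi) x_n}$: when $x_n < 0$, one can push $\tau \to +\infty$ in a controlled way using (b) to show the integral vanishes.

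For $(a) \Rightarrow (b)$: if $E$ has support in $H$, a Paley--Wiener type argument shows that $\widehat{E}(\zeta)$ admits a meaningful continuation into the tube $\{\mathrm{Im}\,\zeta = -\tau N,\ \tau \geq 0\}$ away from zeros of $P$, with $\widehat{E}(\zeta) = 1/P(\zeta)$ there. If (b) failed, one could produce a single-valued analytic branch $\sigma(\zeta)$ of a zero of $P(\zeta + \sigma N) = 0$ on a large ball with $\sigma$ taking arbitrarily negative imaginary values, obstructing the required holomorphic extension of $1/P$ and contradicting the existence of $E$. Finally, $(a) \Rightarrow (c)$ has already been shown above, closing the cycle. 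The delicate point throughout is the measurable and quasi-continuous choice of $\tau(\xi)$ in the $(b) \Rightarrow (a)$ step, which requires the full force of the algebraic lower bounds that condition (b) encodes.
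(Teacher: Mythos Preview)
The paper does not prove this proposition at all: it is quoted verbatim as \cite[Theorem 12.8.1]{H2005analysis} and used as a black box. The paper's own contribution in this subsection begins afterwards, with Propositions \ref{pp_12813} and \ref{pp_Eestimate_new}, where the constants in H\"ormander's a priori estimate (Proposition \ref{pp_12812}) are tracked carefully. So there is no ``paper's proof'' to compare against.

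That said, your sketch has real gaps relative to H\"ormander's actual argument. In your $(a)\Rightarrow(c)$ step you invoke Proposition \ref{pp_E0} to get $E\in\BinPloc$, but Proposition \ref{pp_E0} produces \emph{some} fundamental solution with that regularity, not one supported in $H$; condition (a) hands you a fundamental solution supported in $H$ with no regularity information, and you cannot simply merge the two. The convolution $E\ast f$ is also not defined without one factor being compactly supported, which you do not arrange. In your $(c)\Rightarrow(a)$ step, the closed graph theorem needs a single-valued linear map, but (c) only asserts existence of \emph{a} solution, so there is no map to which to apply it. For $(b)\Rightarrow(a)$, your proposed contour-shift construction with a measurable $\tau(\xi)$ is not how H\"ormander proceeds: the route actually taken (and the one the paper relies on downstream) is to derive the a priori estimate of Proposition \ref{pp_12812} from condition (\refb) via the algebraic Lemmas 12.8.5--12.8.10 in \cite{H2005analysis}, and then obtain existence by Hahn--Banach duality, exactly as the paper does in the proof of Proposition \ref{pp_12813}. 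A direct oscillatory-integral construction of $E$ from (b) would require uniform lower bounds on $|P|$ along a single deformed contour, which condition (b) as stated does not provide.
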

We emphasize that condition (\refb)  is invariant under translation, i.e., it holds for $P(D)$ exactly when it holds for $P(D + \zeta^0)$, where $\zeta^0 \in \mathbb{C}^n$.
However, the constant $A_1, A_2$ are changed by $\zeta^0$ correspondingly.

Next, we define the following quotient norm
\[
\|u\|_{p,k}^- = \inf_{v}\|v\|_{p,k}, \quad \text{where } u, v \in \mathscr{S}(\Rn) \text{ with $u = v$ in  $-H$},
\]
where we define $-H  = \{x: -x \in H \}$. 
Note this norm allows us to focus on the restrictions of $u$ to $-H$.
We have the following proposition.
\begin{pp}[{\cite[Theorem 12.8.12]{H2005analysis}}]\label{pp_12812}
    Assume $P$ satisfies condition (\refb) in Proposition
    \ref{pp_1281} with $A_2 = A_1 +1$.
    Then one can find constants $\kappa$ and $C$ for arbitrary $p \in [1, \infty]$ and any temperate weight function $k$  such that
    \[
    \| v\|_{p,k}^- \leq C e^{\kappa M}  \|P(D) v\|_{p, k/\tP}^-,
    \]
    when $v \in C_0^\infty(\Rn)$ and we assume $\supp v \cap (-H) \subset \{(x',x_n): |x'| \leq M\}$.
\end{pp}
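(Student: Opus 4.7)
The plan is a duality argument combined with an exponential weight trick, reducing the quantitative estimate to the qualitative solvability already supplied by Proposition \ref{pp_1281}(c).

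First, by Proposition \ref{pp_10114}, the quotient norm admits the representation
\[
\|v\|_{p,k}^- = \sup\bigl\{|\check{u}(v)| : u \in B_{p', 1/k}(\Rn),\ \supp u \subset -H,\ \|u\|_{p', 1/k} \leq 1\bigr\},
\]
since the annihilator of the subspace $\{g \in B_{p,k} : g|_{-H} = 0\}$ consists of those dual elements supported in $-H$. Given such a $u$, I would apply Proposition \ref{pp_1281}(c) to the formal transpose $\Pad(D)$ on the reflected half-space $-H$ --- to which condition (\refb) transfers from $P$ on $H$ via the substitution $x \mapsto -x$ --- to produce $w$ with $\supp w \subset -H$ and $\Pad(D) w = u$. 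Integration by parts, together with the observation that $\langle P(D)v, w\rangle$ depends on $P(D)v$ only through its restriction to $-H$, shows that for any extension $\tilde g$ of $P(D)v|_{-H}$ one has $|\check{u}(v)| = |\langle \tilde g, w\rangle| \leq \|\tilde g\|_{p,k/\tP}\|w\|_{p', \tP/k}$; taking the infimum over $\tilde g$ yields
\[
|\check{u}(v)| \leq \|P(D)v\|_{p, k/\tP}^- \cdot \|w\|_{p', \tP/k}.
\]

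The remaining task is to bound $\|w\|_{p',\tP/k}$ by $Ce^{\kappa M}\|u\|_{p', 1/k}$. Because the pairing only probes $u$ on $\supp v \cap (-H) \subset \{|x'| \leq M\}$, one may conjugate the equation by an exponential weight $e^{\tau x \cdot \zeta^0}$ in a direction $\zeta^0$ tangential to $\partial H$. This conjugation replaces $\Pad(D)$ by $\Pad(D + i\tau\zeta^0)$, costs at most a factor $e^{\tau M}$ in the tangential cutoff of $u$, and by (\ref{eq_tP}) leaves the weight $\tP$ equivalent up to a $\tau$-dependent constant. The translated symbol continues to satisfy condition (\refb) with constants uniform in $|\tau| \leq 1$, precisely because the hypothesis $A_2 = A_1 + 1$ provides a unit buffer absorbing the imaginary perturbation of the characteristic roots $\sigma$. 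Proposition \ref{pp_1281}(c) applied to the conjugated problem then delivers the bound on $\|w\|_{p', \tP/k}$ with $\kappa$ proportional to the admissible range of $\tau$.

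The main technical obstacle is that Proposition \ref{pp_1281}(c) as stated gives only existence of $w$, whereas one needs a genuinely bounded right inverse whose operator norm is uniform across the translated family $\Pad(D + i\tau\zeta^0)$. Making this quantitative amounts to revisiting the Paley--Wiener / Phragm\'en--Lindel\"of construction underlying Proposition \ref{pp_1281}, where the buffer $A_2 \geq A_1 + 1$ turns out to be exactly the slack required to keep the relevant contour deformations valid uniformly in $\tau$. Once this uniform right-inverse estimate is in place, combining it with the duality step gives the stated inequality with $C$ and $\kappa$ depending only on $P$, $n$, $p$, and $k$.
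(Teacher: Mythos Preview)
The paper does not supply its own proof of this proposition: it is quoted verbatim as \cite[Theorem 12.8.12]{H2005analysis}, with only a remark afterward that the constant $Ce^{\kappa M}$ depends solely on $A_1$, $A_2$, $n$, and $\deg P$, referring the reader to H\"ormander's Lemmas 12.8.5--12.8.10 for the underlying machinery. So there is no in-paper argument to compare against; the relevant benchmark is H\"ormander's direct proof.

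Your proposal has a genuine gap, and you have in fact identified it yourself. The duality reduction is fine, but it transfers the entire burden to producing $w$ with $\Pad(D)w=u$, $\supp w\subset -H$, and a \emph{quantitative} bound $\|w\|_{p',\tP/k}\le C\|u\|_{p',1/k}$. Proposition \ref{pp_1281}(c) only asserts that some $w\in\Bpkloc$ exists; it gives no norm control whatsoever, and in H\"ormander the logical flow runs in the opposite direction: the a priori inequality of Theorem 12.8.12 is established first, by a chain of algebraic lemmas on the localizations of $P$ at infinity (the sets $F_m(Z)$ the paper alludes to), and only afterward is existence deduced by Hahn--Banach --- exactly as the paper does in Proposition \ref{pp_12813}. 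Trying to reverse this order is circular unless you supply the bounded right inverse independently, and your sketch of how to do so (conjugate by $e^{\tau x\cdot\zeta^0}$ with $\zeta^0$ tangential, invoke the buffer $A_2=A_1+1$) does not explain why conjugation by a \emph{bounded} parameter $|\tau|\le 1$ would ever yield decay or summability sufficient to control $\|w\|$; as written it just trades one unbounded-inverse problem for a one-parameter family of them. The phrase ``revisiting the Paley--Wiener/Phragm\'en--Lindel\"of construction underlying Proposition \ref{pp_1281}'' is an admission that the real content lies in H\"ormander's lemmas, not in your duality wrapper. In short, the duality step is correct but cheap; the hard analytic content --- uniform control of solutions supported in $H$ in terms of the data --- is precisely what Theorem 12.8.12 asserts, and your proposal does not furnish it.
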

According to the remarks in the proof of
\cite[Theorem 12.8.17]{H2005analysis},
this constant $C e^{\kappa M}$ only depends on $A_1, A_2$, the dimension $n$, and $\deg P$.
Indeed, \cite[Theorem 12.8.17]{H2005analysis} is proved based on Proposition 12.8.4, which depends on Lemma 12.8.5 to 12.8.10 there.
In particular, the set $F_m(Z)$ defined in \cite[Lemma 12.8.7]{H2005analysis} and the estimate there are independent of the choice of the polynomial $R$.

Thus, one can show a local existence result using estimates from Proposition \ref{pp_12812}, based on the proof of \cite[Theorem 12.8.3]{H2005analysis}.

\begin{pp}\label{pp_12813}
    Let $1 < p \leq \infty$, $k$ be a temperate weight function on $\Rn$, and $X \subset \Rn$ be a bounded open set.
    Moreover, suppose $X \subset \{(x', x_n): |x'| \leq M\}$ for some constant $M$.
    Assume $P(D)$ satisfies condition (\refb) with $A_1 = A_2 +1$.
    If $f \in \Bpk(\Rn) \cap \mathcal{E}'(\Rn)$ and $\supp f \subset H$, then one can find $u \in B_{p, k \tP }(\Rn)$ such that
    $\supp u \subset H$ and $P(D) u = f$ in $X$ with
    \[
    \| u\|_{p,k\tP} \leq C  \|f\|_{p, k},
    \]
    where $C$ depends on $n$, $X$, $A_1, A_2$, and $\deg P$.
\end{pp}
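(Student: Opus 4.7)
The approach is to follow the duality template of \cite[Theorem 12.8.3]{H2005analysis}, promoting the a priori estimate of Proposition \ref{pp_12812} to a solvability statement by Hahn--Banach. First, I would apply Proposition \ref{pp_12812} to the formal transpose $\check{P}(D)$ with weight $1/k$, obtaining
\[
\|w\|_{p', 1/k}^{-} \leq C e^{\kappa M} \|\check{P}(D) w\|_{p', 1/(k\tP)}^{-}
\]
for $w \in C_0^\infty(\Rn)$ with $\supp w \cap (-H) \subset \{|x'| \leq M\}$, where $p'$ is the conjugate exponent. One must check that the hypothesis (\refb) with the given relation between $A_1, A_2$ transfers to the estimate for $\check{P}$; this is the content of the translation-invariance remark following Proposition \ref{pp_1281} together with the proof structure of \cite[Thm.~12.8.17]{H2005analysis}. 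The parameter $M$ will be chosen large enough that both $X$ and $\supp f$ fit inside the slab $\{|x'|\leq M\}$.

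Next I would introduce the linear subspace $V = \check{P}(D) C_0^\infty(\Rn) \subset B_{p', 1/(k\tP)}(\Rn)$ and define a functional
\[
L\bigl(\check{P}(D)w\bigr) := \langle f, w\rangle.
\]
Well-definedness is checked by observing that $\supp f \subset H$ forces $\langle f, w\rangle$ to depend only on the restriction of $w$ to $H$, while the a priori estimate applied when $\check{P}(D) w = 0$ forces $w|_{-H}$ to vanish in the quotient; together these show $\langle f, w\rangle = 0$ whenever $\check{P}(D) w = 0$. Continuity of $L$ on $V$ follows from combining Proposition \ref{pp_10114} (dual pairing bound $\|f\|_{p,k} \|w\|_{p', 1/k}^-$) with the estimate above, giving
\[
|L(\check{P}(D)w)| \leq C e^{\kappa M} \|f\|_{p, k}\, \|\check{P}(D) w\|_{p', 1/(k\tP)}^{-}.
\]

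I would then apply Hahn--Banach to extend $L$ to a continuous linear functional on $B_{p', 1/(k\tP)}(\Rn)$ and invoke Proposition \ref{pp_10114} in the reverse direction to represent it as an element $u \in B_{p, k\tP}(\Rn)$ with $\|u\|_{p, k\tP} \leq C\|f\|_{p, k}$. Testing against $w \in C_0^\infty(X)$ yields $\langle P(D) u, w\rangle = \langle u, \check{P}(D) w\rangle = L(\check{P}(D) w) = \langle f, w\rangle$, so $P(D) u = f$ in $X$. The support condition $\supp u \subset H$ is automatic: since $L$ was defined through the $\|\cdot\|^-$ quotient, it annihilates test functions supported in $(-H)^{\circ}$, and hence $u$ must vanish there.

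The main obstacle, and the reason this argument is more delicate than its non-characteristic analog in Proposition \ref{pp_Eestimate}, lies in the bookkeeping of supports and quotient norms---in particular verifying that the Hahn--Banach extension truly produces a solution supported in $H$ rather than in some larger set, and that condition (\refb) indeed passes to $\check{P}(D)$ via the translation-invariance remark so that Proposition \ref{pp_12812} is applicable on the dual side. A secondary technical point is the case $p = \infty$, where $B_{1, 1/(k\tP)}$ fails to be reflexive and one must carry out the Hahn--Banach step in the weak-$*$ topology, as is standard in \cite[\S12.8]{H2005analysis}.
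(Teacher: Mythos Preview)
Your overall strategy---dualize, invoke the a priori estimate of Proposition \ref{pp_12812}, then apply Hahn--Banach and Proposition \ref{pp_10114}---is correct and is exactly what the paper does. But there is a genuine gap in how you handle the reflection. The translation-invariance remark after Proposition \ref{pp_1281} concerns $P(D)\mapsto P(D+\zeta^0)$, not $P(D)\mapsto \check P(D)=P(-D)$; it does \emph{not} say that condition (\refb) for $P$ with half-space $H$ implies condition (\refb) for $\check P$ with the same $H$. In fact it does not: a root $\sigma(\zeta)$ of $\check P(\zeta+\sigma N)=0$ corresponds to a root $-\sigma(-\zeta)$ of $P(\cdot+\sigma N)=0$, so (\refb) for $P$ yields only $\inf_B\mathrm{Im}\,\sigma\le -A_2$, not $\sup_B\mathrm{Im}\,\sigma\ge A_2$.

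This error propagates to the support bookkeeping, which you yourself flag as the delicate point. Your continuity bound $|\langle f,w\rangle|\le \|f\|_{p,k}\,\|w\|_{p',1/k}^-$ is false: the quotient $\|\cdot\|^-$ is taken modulo functions vanishing on $-H$ (i.e.\ supported in $\{x_n>0\}$), so passing to that infimum requires $\supp f\subset -H$, whereas you have $\supp f\subset H$. Likewise, a functional bounded in $\|\cdot\|^-$ annihilates test functions supported in the interior of $H$, not of $-H$; your Hahn--Banach element would therefore be supported in $-H$, the wrong half-space.

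The paper's fix is to reflect the \emph{data} rather than the operator: one rewrites $P(D)u=f$ in $X$ as $\check u(P(D)v)=\check f(v)$ for $v\in C_0^\infty(-X)$, and applies Proposition \ref{pp_12812} to $P$ itself with exponent $p'$ and weight $1/k$. Since now $\supp\check f\subset -H$, the pairing $|\check f(v)|$ is genuinely controlled by $\|v\|_{p',1/k}^-$; Hahn--Banach produces $\check U$ bounded in $\|\cdot\|^-_{p',1/(k\tP)}$, hence supported in $-H$, and reflecting back gives $\supp U\subset H$ together with the norm bound via Proposition \ref{pp_10114}. Equivalently, one may observe that condition (\refb) for $P$ with direction $N$ is exactly condition (\refb) for $\check P$ with direction $-N$, and then apply Proposition \ref{pp_12812} to $\check P$ with half-space $-H$ and the corresponding quotient norm---but your proposal uses the wrong half-space throughout. (A minor aside: the case $p=\infty$ causes no difficulty, since then $p'=1<\infty$ and Proposition \ref{pp_10114} applies directly; the restriction $p>1$ in the statement is there precisely so that $p'<\infty$.)
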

\begin{proof}
    Recall we write $\check{u} = u(-x)$ and let $\Pad$ be the formal adjoint of $P$.
    Note that we have $P(D) u = f$ in $X$
    if and only if  $\Pad (D) \check{u} = \check{f}$ in $-X$,
    which means
    \[
    \check{u} (P(D) v) = \check{f}(v), \qquad \text{for any } v \in C_0^\infty(-X).
    \]
    By Proposition \ref{pp_10114}, with $f \in \Bpk(\Rn)$, it defines a continuous linear form $\check{f}(w)$ for $w \in \mathscr{S}(\Rn)$.
    This linear form can be extended to $B_{p', 1/k}(\Rn)$, when $p'$ satisfies  ${1}/{p}+ {1}/{p'} = 1$.
    The norm of this linear form is given by $\|f\|_{p,k}$.
    Then we have
    \[
    |\check{f}(v)| \leq \|f\|_{p,k} \|v\|_{p', 1/k}, \qquad v \in B_{p', 1/k}(\Rn). 
    \]
    We emphasize that the notations $p$ and $p'$ are switched, compared to Proposition \ref{pp_10114}.
    Thus, we assume $p > 1$ for convenience.

    Note that $-X \subset  \{(x', x_n): |x'| \leq M\}$ as well.
    According to Proposition \ref{pp_12812}, there exist constants $\kappa$ and
    $C'$ depending on $A_1, A_2$, the dimension $n$, and $\deg P$, such that
    \[
    \|f\|_{p,k} \|v\|_{p', 1/k} \leq C' e^{\kappa M}\|f\|_{p,k} \|P(D)v\|^-_{p', \tP/{k}},
    \]
    for any $v \in C_0^\infty(-X)$.
    This implies that we can regard $\check{u}: P(D)v \rightarrow \check{f}(v)$ as a continuous linear form on $P(D)(C_0^\infty(-X))$.
    By Hahn-Banach theorem, there exists a continuous linear form $\check{U}$ on $C_0^\infty(\Rn)$, such that
    \[
    |\check{U}(w)| \leq C \|w\|^-_{p', \tP/{k}}, \qquad \text{for any } w \in C_0^\infty(\Rn),
    \]
    where we write $C = C' e^{\kappa M} \|f\|_{p,k}$.
    We must have $\check{U}$ supported in $-H$, which implies $U$ is supported in $H$.
    Again by Proposition \ref{pp_10114},
    we have $U \in B_{p, k\tP}(\Rn)$ with
    \[
    \| u\|_{p,k\tP} \leq C  \|f\|_{p, k}.
    \]
\end{proof}
\begin{crl}\label{crl_fundamental}
    Let $X, P(D)$ be as in Proposition \ref{pp_12813}.
    In particular, there exists a solution $\hEz \in B_{\infty,\tP}(\Rn)$ with
    $\supp \hEz \subset H$ such that $P(D) \hEz = \delta$ in $X$ and
    \[
    \| \hEz \|_{\infty,\tP} \leq C,
    \]
    where $C$ depends on $n$, $X$, $A_1, A_2$, and $\deg P$.
\end{crl}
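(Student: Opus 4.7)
The plan is to realize the corollary as an immediate specialization of Proposition \ref{pp_12813}, with the right-hand side chosen to be the Dirac mass and the weight chosen to be trivial. First I would verify that the hypotheses of Proposition \ref{pp_12813} are satisfied for this input. The choice is $p = \infty$, $k \equiv 1$, and $f = \delta$. The constant function $k \equiv 1$ is a temperate weight, since the defining inequality $k(\xi + \eta) \leq (1+C|\xi|)^N k(\eta)$ reduces to $1 \leq 1$. The Dirac mass $\delta$ lies in $\mathcal{E}'(\Rn)$ with $\supp \delta = \{0\} \subset H$, and since $\widehat{\delta}(\xi) \equiv 1$ we have
\[
\|\delta\|_{\infty, 1} = \operatorname*{ess\,sup}_{\xi \in \Rn} |1 \cdot \widehat{\delta}(\xi)| = 1,
\]
so $\delta \in B_{\infty,1}(\Rn) \cap \mathcal{E}'(\Rn)$.

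Next I would invoke Proposition \ref{pp_12813}, which under the standing assumption that $P(D)$ satisfies condition (\refb) and $X \subset \{(x', x_n) : |x'| \leq M\}$ produces some $u \in B_{\infty, 1 \cdot \tP}(\Rn) = B_{\infty, \tP}(\Rn)$ with $\supp u \subset H$ and $P(D) u = \delta$ in $X$, together with the bound
\[
\|u\|_{\infty, \tP} \leq C \|\delta\|_{\infty, 1} = C,
\]
where the constant $C$ depends only on $n$, $X$, $A_1$, $A_2$, and $\deg P$. Setting $\hEz := u$ then yields all of the asserted conclusions: $\hEz \in B_{\infty,\tP}(\Rn)$, $\supp \hEz \subset H$, $P(D)\hEz = \delta$ in $X$, and $\|\hEz\|_{\infty,\tP} \leq C$.

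Since the corollary is essentially a specialization, there is no genuine obstacle; the only subtlety to check is that Proposition \ref{pp_12813} is indeed applicable when $p = \infty$, because its proof invokes the duality statement Proposition \ref{pp_10114} (which is stated for $p < \infty$). Here the duality is used in the form: $f \in B_{\infty, k}$ defines a continuous linear functional on $B_{1, 1/k}$ of norm $\|f\|_{\infty, k}$, which is exactly the case of Proposition \ref{pp_10114} with the roles of $p$ and $p'$ swapped. Thus the argument goes through verbatim, and no further modification is required.
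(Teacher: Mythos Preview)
Your proposal is correct and matches the paper's proof essentially verbatim: the paper also observes $\delta \in B_{\infty,1}(\Rn)\cap\mathcal{E}'(\Rn)$ with $\supp\delta\subset H$ and $\|\delta\|_{\infty,1}=1$, then applies Proposition~\ref{pp_12813}. Your final paragraph on the $p=\infty$ duality is unnecessary, since Proposition~\ref{pp_12813} is already stated for $1<p\leq\infty$ and its proof explicitly notes that the roles of $p$ and $p'$ are switched relative to Proposition~\ref{pp_10114}, which is precisely why the hypothesis there is $p>1$ rather than $p<\infty$.
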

\begin{proof}
Note that $\delta \in B_{\infty,1}(\Rn) \cap \mathcal{E}'(\Rn)$ with $\supp \delta \subset H$ and $\|\delta\|_{\infty, 1} = 1$.
By Proposition \ref{pp_12813}, we have the desired estimate for $E$.
\end{proof}
We emphasize such $\hEz$ is almost a fundamental solution, if we choose $X$ large enough.
More precisely, we prove the following proposition.
\begin{pp}\label{pp_Eestimate_new}
    Let $X, P(D)$ be as in Proposition \ref{pp_12813}.
    Let $\tX$ be a small open neighborhood of $X$ and $X_0$ be a small open neighborhood of $\tX-\tX$.
    Then there exists a bounded linear operator $\hE$  such that
    \[
    P\hE f = f \quad \text{in } X, \quad \text{ for any } f \in H^{s}(\tX) \text{ and } \supp f \subset H ,
    \]
    with  $\supp \hE f \subset H$ and 
    \begin{align}\label{eq_Eestimate_H}
        \| \hE f\|_{H^s(\tX)} \leq C \sup_{\xi \in \mathbb{R}^n} \frac{1}{\tP(\xi)} \|f\|_{H^s(\tX)},
    \end{align}
where $C$ depends on $X_0$, $A_1, A_2$, the dimension $n$, and $\deg P$.
\end{pp}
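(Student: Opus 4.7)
The plan is to mirror the construction of $E$ in the proof of Proposition \ref{pp_Eestimate}, replacing the regular fundamental solution $E_0$ by the half-space-supported solution $\hEz$ from Corollary \ref{crl_fundamental}. First I would apply Corollary \ref{crl_fundamental} with $X_0$ playing the role of $X$, to obtain $\hEz \in B_{\infty,\tP}(\Rn)$ with $\supp \hEz \subset H$, $P(D)\hEz = \delta$ in $X_0$, and $\|\hEz\|_{\infty,\tP} \leq C$. Given $f \in H^s(\tX)$ with $\supp f \subset H$, I would extend $f$ to $f_0 \in H^s(\Rn)$ with $\supp f_0 \subset \tX \cap H$ and $\|f_0\|_{H^s(\Rn)} \leq C \|f\|_{H^s(\tX)}$, and set $\hE f := (\hEz \ast f_0)|_{\tX}$. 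Since $\supp \hEz, \supp f_0 \subset H$ and $H + H \subset H$, the support of $\hE f$ is automatically contained in $H$.

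To verify that $P \hE f = f$ on $X$, I would write $P\hEz = \delta + r$, where the distribution $r$ vanishes on $X_0$. Then $P(\hEz \ast f_0) = f_0 + r \ast f_0$, and for every $x \in X$ and every $y \in \supp f_0 \subset \tX$ one has $x - y \in X - \tX \subset \tX - \tX \subset X_0$, which forces $(r \ast f_0)(x) = 0$. Hence $P \hE f = f_0 = f$ on $X$.

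For the estimate, I would follow the cutoff argument from the end of the proof of Proposition \ref{pp_Eestimate}. Choose $\psi \in C_0^\infty(X_0)$ with $\psi \equiv 1$ on a neighborhood of $\tX - \tX$, and set $F = \psi \hEz \in B_{\infty,\tP}(\Rn) \cap \mathcal{E}'(\Rn)$. By Proposition \ref{pp_psiu} combined with \eqref{eq_tP}, $\|F\|_{\infty,\tP}$ is bounded by a constant depending only on $\psi$, $n$, $\deg P$, and $X_0$. The same support argument gives $\hEz \ast f_0 = F \ast f_0$ on $\tX$, so for any multi-index $|\alpha| \leq s$,
\[
\partial^\alpha (\hE f) = F \ast \partial^\alpha f_0 \quad \text{on } \tX.
\]
Applying Proposition \ref{pp_convl} with $u_1 = \partial^\alpha f_0 \in B_{2,1}(\Rn) \cap \mathcal{E}'(\Rn)$ and $u_2 = F$, together with the obvious inequality $\|u\|_{2,1} \leq \sup_\xi (1/\tP(\xi))\|u\|_{2,\tP}$, yields
\[
\|\partial^\alpha (\hE f)\|_{L^2(\tX)} \leq C \sup_{\xi \in \Rn} \frac{1}{\tP(\xi)} \|\partial^\alpha f_0\|_{L^2(\Rn)} \leq C \sup_{\xi \in \Rn} \frac{1}{\tP(\xi)} \|f\|_{H^s(\tX)},
\]
and summing over $|\alpha| \leq s$ gives \eqref{eq_Eestimate_H}.

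The main obstacle is the extension step: one needs $f_0 \in H^s(\Rn)$ extending $f$ with support contained both in $\tX$ and in $H$. Extension by zero across $\partial H$ need not stay in $H^s$ when $s \geq 1$ unless the traces of $f$ on $\partial H$ vanish, so the cleanest fix is to first multiply $f$ by a smooth cutoff compactly supported in $\tX \cap H^\circ$ before applying a standard Sobolev extension. In the application to the remainder term in Section \ref{sec_gocnew}, $f$ will already be supported strictly inside $H$, which makes this step routine. All remaining steps are direct half-space analogues of those in the proof of Proposition \ref{pp_Eestimate}, with Corollary \ref{crl_fundamental} in place of Proposition \ref{pp_E0}.
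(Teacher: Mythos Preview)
Your proposal is correct and follows the same overall plan as the paper (apply Corollary~\ref{crl_fundamental} on $X_0$, convolve with a compactly supported version of $f$, and read off the $H^s$ bound from Proposition~\ref{pp_convl}). Two points where the paper's argument is slightly leaner than yours are worth noting.

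First, the cutoff $F=\psi\hEz$ is unnecessary here. In Proposition~\ref{pp_Eestimate} the cutoff was forced because $E_0$ lies only in $B^{\mathrm{loc}}_{\infty,\tP}$, but Corollary~\ref{crl_fundamental} already gives $\hEz\in B_{\infty,\tP}(\Rn)$ globally. The paper therefore applies Proposition~\ref{pp_convl} directly with $u_1=\tf$ and $u_2=\hEz$, using the pointwise bound $|\widehat{\hEz}(\xi)|\le\sup_\eta\tP(\eta)^{-1}\,|\tP(\xi)\widehat{\hEz}(\xi)|$ to pass from $\|\hEz\|_{\infty,\tP}$ to $\|\hEz\|_{\infty,1}$. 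This avoids the detour through Proposition~\ref{pp_psiu}.

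Second, the extension ``obstacle'' you flag is not actually present. The paper simply takes $\psi\in C_0^\infty(\tX)$ with $\psi\equiv1$ on $\overline{X}$ and sets $\tf=\psi f$, extended by zero outside $\tX$. Since $\psi$ has compact support in the open set $\tX$, one automatically has $\tf\in H^s(\Rn)$ with $\|\tf\|_{H^s(\Rn)}\le C\|f\|_{H^s(\tX)}$; and $\supp\tf\subset\supp\psi\cap\supp f\subset\tX\cap H$ without any condition on the traces of $f$ along $\partial H$. There is no need to cut off inside $H^\circ$ or to assume $f$ is supported strictly in the interior of $H$.
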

\begin{proof}
We are motivated by the proof of \cite[Theorem 12.8.14]{H2005analysis}.
Note $X_0$ as a small neighborhood of $\tX - \tX$ is also bounded.
By Corollary \ref{crl_fundamental}, there exists $\hEz$ with $\supp \hEz \subset H$ such that
$P(D) \hEz = \delta$ in $X_0$.
Thus, we set $g = P(D) \hEz - \delta$ and $g$ vanishes in $X_0$.


Now let $\psi \in C_0^\infty(\tX)$ be a smooth cutoff function with $\psi = 1$ in the closure of $X$.
Since $f \in H^s(\tX)$ with $\supp f \subset H$, then $\tf = \psi f$ is supported in $H$ satisfying
\[
\|\tf\|_{H^s(\Rn)} \leq C \|f\|_{H^s(\tX)}, \qquad \tf = f \text{ in } X.
\]
Then we set $u = \hEz \ast \tf$ to have
\[
P(D)u = (P(D)\hEz) \ast \tf = (\delta + g) \ast \tf = \tf + g \ast \tf.
\]
Recall $g$ is supported in $\Rn \setminus X_0$ and $\tf$ is supported in $\tX$.
Since $(\Rn \setminus X_0) \cap (\tX - \tX) = \emptyset$, we have
\[
g \ast \tf(x) = \int g(x-y)\tf(y) \diff y= 0, \qquad \text{for any } x \in \tX.
\]
This implies $P(D) u = f$ in $X$. Since $\supp \hEz \subset H$ and $\supp \tf \subset H$, we have $\supp u \subset H$. Moreover, we estimate
\begin{align*}
    \|u \|_{p, k}  = \|\hEz \ast \tf\|_{p,k}\leq \|\hEz\|_{\infty, 1}\|\tf\|_{p,k}
    \leq  \sup_{\xi \in \mathbb{R}^n} \frac{1}{\tP(\xi)} \|\hEz\|_{\infty, \tP}\|\tf\|_{p,k}\leq C\sup_{\xi \in \mathbb{R}^n} \frac{1}{\tP(\xi)} \|\tf\|_{p,k},
\end{align*}
where we use Proposition \ref{pp_convl} and the second inequality comes from the estimate
\[
|\widehat{\hEz}(\xi)|  \leq
\sup_{\xi \in \mathbb{R}^n} \frac{1}{\tP(\xi)} |\tP(\xi) \widehat{\hEz}(\xi)|.
\]
Here the constant $C$ depends on the dimension $n$, $X_0$, $A_1, A_2$, and $\deg P$.
In particular,
we can choose $p = 2$ and $k = k_s = {(1 + |\xi|^2)^{s/2}}$ to have
\[
\| u\|_{H^s(\Rn)}  = \|\tu \|_{2,k_s}
\leq C \sup_{\xi \in \mathbb{R}^n} \frac{1}{\tP(\xi)} \|\tf\|_{2,k_s}
\leq C \sup_{\xi \in \mathbb{R}^n} \frac{1}{\tP(\xi)} \|f\|_{H^s(\tX)}.
\]
Thus, we define $\hE f = \psi u$ to have the desired result.
\end{proof}
\section{Geometric optics constructions}\label{sec_goc}
In this section, we would like to construct an approximate solution to the linear problem
\begin{align}\label{eq_linear1}
    \begin{aligned}
        \opq u  &= 0, & \  & \mbox{on } (0, T) \times \Omega, \\
    \end{aligned}
\end{align}
For a large parameter $\rho$ and fixed $\omega \in S^{n-1}$, we consider an asymptotic solution of the form
\begin{align}\label{eq_vN}
    \vN(t,x) = e^{i\varphi(t,x,\rho, \omega)} a(t,x, \rho, \omega),
\end{align}
where $\varphi$ is the phase function
and $a$ is the amplitude satisfying
\[
a(t,x, \rho, \omega) = \sum_{j=0}^N{\rho^{-j}}{a_j(t,x, \omega)}.
\]
Plugging this form into the linear problem, we have
\begin{align*}
    &\opq(e^{i\varphi} a ) \\
    = &e^{i\varphi}(
    -(\partial_t \varphi)^2 a + i \partial_t^2 \varphi a
    + 2i \partial_t \varphi \partial_t a
    + \partial_t^2 a
    -(-(\nabla \varphi \cdot \nabla \varphi) a + i \Delta \varphi a +  2 i\nabla \varphi \cdot \nabla a + \Delta a)\\
    &-(-i \partial_t \varphi(\nabla \varphi \cdot \nabla \varphi) a
    - 2(\partial_t \nabla \varphi \cdot \nabla \varphi)a
    - (\nabla \varphi \cdot \nabla \varphi)  \partial_t a
     - \partial_t \varphi \Delta \varphi a
    + i \partial_t \Delta \varphi a
    + i\Delta \varphi \partial_t a \\
    &- 2 \partial_t \varphi \nabla \varphi \cdot \nabla a
    + 2i (\partial_t \nabla \varphi) \cdot \nabla a
    + 2i \nabla \varphi \cdot \nabla \partial_t a
    + i \partial_t \varphi \Delta a + \partial_t \Delta a
    ) + q a
    ).
\end{align*}
We choose the Eikonal equation
\[
-(\partial_t \varphi)^2 +i \partial_t \varphi(\nabla \varphi \cdot \nabla \varphi)   = 0,
\]
which is satisfied by a linear phase function
\[
\varphi(t,x, \rho, \omega) = -i\rho^2 t + i\rho(x \cdot \omega).
\]
This phase function has complex frequency and gives us exponentially growing solutions and we have
\begin{align*}
    &\opq(e^{\rho^2 t-\rho(x \cdot \omega)} a )
    = e^{\rho^2 t-\rho(x \cdot \omega)}(
    \rho^3 \Tzero a
    + \rho^2 \Tone a
    + \rho \Ttwo a
    + (P+q) a),
\end{align*}
where we introduce the following operators
\begin{align*}
    \Tzero = 2 \omega \cdot \nabla, \quad
    \Tone  = \partial_t  -  \Delta  - 1, \quad
    \Ttwo = 2 ( \omega \cdot \nabla  + \omega \cdot \nabla \partial_t), \quad
    P = \op.
\end{align*}
It follows that the amplitude satisfies
\begin{align}
    & \Tzero a_0(t, x, \omega) = 0, \label{transport_0}\\
    &\Tzero a_{1}(t, x, \omega) = -\Tone a_0, \label{transport_1}\\
    &\Tzero a_{2}(t, x, \omega) = -\Tone a_1 - \Ttwo a_0,
    \label{transport_2}\\
    &\Tzero a_{j}(t, x, \omega) = -\Tone a_{j-1} - \Ttwo a_{j-2} - (P+q) a_{j-3}, \quad \text{ for } k \geq 3. \label{transport_j}
\end{align}

To find the amplitude  $a(t, x, \rho, \omega)$, one solves $a_0$ from (\ref{transport_0}) first and then plug it in (\ref{transport_1}) to solve $a_1$, and then plug $a_0, a_1$ in
(\ref{transport_2}) to solve $a_2$, and next solve $a_j$ from (\ref{transport_j}) for $j \geq 3$.
One possible solution to (\ref{transport_0}) is
\begin{align}\label{eq_a0}
    a_0(t, x,\omega) = \phi(t) \prod_{j = 2}^{n} \chi(\omega_j \cdot (x - y_0)),
\end{align}
where $\phi(t) \in C_0^\infty(\mathbb{R})$ is supported in $(0, T)$
and $\chi \in C_0^\infty(\mathbb{R})$ is supported in $(-\ep, \ep)$ with  $\chi(s) = 1$ near $0$, for some $\ep > 0$.
In addition, here we pick a fixed point $y_0 \in \partial \Omega$ and suppose $\omega_1, \ldots, \omega_{n}$ form an orthogonal basis in $\mathbb{R}^n$ with $\omega_1 = \omega$.
Note that for each fixed $t$, the leading term $a_0(t,x, \omega)$ is a smooth function supported in a small $\ep$-neighborhood of the ray $x = s \omega + y_0$.
This amplitude is used in \cite{lassas2022inverse} for a time-dependent Schr\"odinger equation and related inverse problems.

Then we define the set
\[
\Sigma({y_0, \omega}) = \{y \in \mathbb{R}^n: \omega \cdot (y-y_0) = 0\}.
\]
For any $x \in \Rn$, there exists a unique $y \in \Sigma({y_0, \omega})$ such that
$x = s \omega + y$ for some $s \in \mathbb{R}$.
With vanishing initial conditions on $\Sigma({y_0, \omega})$,
we have
\begin{align*}
    & a_{1}(t, s\omega + y, \omega) = -\frac{1}{2}\int_0^s \Tone a_0(s' \omega + y) \diff s',\\
    &a_{2}(t, s\omega + y, \omega) =-\frac{1}{2} \int_0^s (\Tone a_1+ \Ttwo a_0)(s' \omega + y) \diff s',\\
    &a_{j}(t, s\omega + y, \omega) = -\frac{1}{2}\int_0^s (\Tone a_{j-1} + \Ttwo a_{j-2} + (P+q) a_{j-3})(s' \omega + y) \diff s', \quad \text{ for } k \geq 3.
\end{align*}
Note that $a_0(t, x, \omega)$ is supported with respect to $t$ in $(0, T)$ and $\Tzero$ is independent of $t$.
This implies $a_j(t, x)$ as well as $v_N(t, x)$
have the same support with respect to $t$.
In addition, the construction above implies that
\begin{align}\label{eq_errorterm}
    (P+ q) \vN &= -\rho^{-N+2}e^{i\varphi} ((T_1 a_N + T_2 a_{N-1} + (P+q) a_{N-2}) \\
    &\quad \quad \quad  - \rho^{-1} (T_2 a_N + (P+q) a_{N-1}) - \rho^{-2}(P+q) a_N) \coloneqq \rho^{-N+2} e^{i\varphi} F_N \nonumber,
\end{align}
for $N \geq 2$. When $N=0$, only $a_N$ is involved and when $N = 1$, only $a_N, a_{N-1}$ are involved.
Note that for  $\rho \gg 1$, we have
$
\|F_N \|_{H^s((0, T) \times \Omega)}
\leq C_{F_N},
$
where $C_{F_N}$ depends on $q, n, N, \Omega$ and the choice of $\phi, \chi$.
\subsection{The backward problem}\label{subsec_backward_goc}
Next, we consider the backward strongly damped wave equation $\opqstar w  = 0$.
Similarly, for a large parameter $\rho$  and fixed $\varpi \in S^{n-1}$,
we construct an approximate solution by considering
\begin{align}\label{eq_wN}
    w_N(t,x) = e^{i\psi(t,x,\rho, \varpi)} b(t,x, \rho, \varpi),
\end{align}
where $\psi$ is a linear phase function given by
\[
\psi(t,x,\rho, \varpi)  = i\rho^2 t  + i\rho (x \cdot \varpi)
\]
and $
b(t,x, \rho, \varpi) = \sum_{j=0}^N \rho^{-j}{b_j(t,x, \varpi)}
$
is smooth amplitude.
In this case, we compute
\begin{align*}
    &\opqstar(e^{-\rho^2 t -\rho(x \cdot \varpi)} b ) \\
    = &e^{e^{-\rho^2 t -\rho(x \cdot \varpi)} b}(
    -\rho^3 \Tzero b
    + \rho^2 \Tonet b
    + \rho \Ttwot b
    + (P+q) b),
\end{align*}
where $\Tzero$ is as before and we define
\[
\Tonet = -\partial_t + \Delta - 1,
\qquad \Ttwot = 2 ( \omega \cdot \nabla  - \omega \cdot \nabla \partial_t).
\]
Then $b_0$ solves the same transport equation (\ref{transport_0}) and therefore
can be given by (\ref{eq_a0}).
Other terms $b_1, \ldots, b_N$ can be constructed in a similar way.

\section{The recovery from  all boundary measurements}\label{sec_allbd}
In this section we would like to use the geometric optics solution constructed above to
solve the inverse problem.
For this purpose, we need to first prove it is indeed an approximate solution to the linear problem, in the sense that its remainder term is relatively small, for sufficiently large $\rho$.
\subsection{The remainder term}
Suppose $v_N$ is constructed as in Section \ref{sec_goc}.
In this part, we construct a remainder term $r_N$ such that
\[
\opq(v_N + e^{i\varphi} r_N) = 0
\]
in $(0, T) \times \Omega$,
with $r_N$ relatively small.
Recall
we write $P = \partial^2_t - \Delta - \partial_t\Delta$. Its symbol is given by
\[
P(\tau, \xi) = -\tau^2 + \xi \cdot \xi + i\tau \xi \cdot \xi,
\qquad \text{for }(\tau, \xi) \in \mathbb{R} \times \mathbb{R}^n.
\]
For a multi-index $\alpha = (\alpha_0, \alpha_1, \ldots, \alpha_n)$,
recall we write
\[
P^{(\alpha)}(\tau, \xi) = \partial^\alpha P(\tau, \xi), \qquad
\tP(\tau, \xi) = (\sum_\alpha |P^{(\alpha)}(\tau, \xi) |^2)^{1/2}.
\]
Then a direct computation shows that
\begin{align}\label{eq_tPgeq}
    \tP^2(\tau, \xi) \geq
    |\partial_\xi P(\tau, \xi)|^2 \geq
    |2 \xi (1 + i\tau)|^2.
\end{align}
Now we prove the following proposition about the remainder term.

\begin{pp}\label{pp_remainder}
    For fixed $N \geq 0$, a large parameter $\rho$, and $\omega \in S^{n-1}$, let $v
    _N(t,x)$ be the approximate solution constructed in (\ref{eq_vN}),
    with the phase function $\varphi = -i \rho^2 t + i\rho(x \cdot \omega)$ and the smooth amplitude  $a =  \sum_{j=0}^N\rho^{-j}{a_j(t,x, \omega)}$,
    where $a_0$ is given by (\ref{eq_a0})
    and $a_j$ satisfies  (\ref{transport_1}, \ref{transport_2},
    \ref{transport_j}) for $j=1, \ldots, N$.
    Then there exists a solution $r_N$ to the equation
    \begin{align}\label{eq_rN}
        (P+q)(e^{i \varphi} r_N) = -(P+q) v_N
        \qquad \text{in }(0, T) \times \Omega
    \end{align}
    such that for $s \geq 0$, we have
    \[
    \|r_N\|_{H^s((0,T)\times \Omega)} \leq {C_N}{\rho^{-N-1}}, 
    \]
    where $C_N$ depends on $s, n, N, q, \Omega$ and the choice of $\phi, \chi$. 
\end{pp}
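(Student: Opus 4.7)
The plan is to reduce the equation (\ref{eq_rN}) to a constant coefficient problem by conjugating with $e^{i\varphi}$, then invert the conjugated operator using Proposition \ref{pp_Eestimate}, and finally absorb the zeroth order term $q$ via a Neumann series.

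\textbf{Step 1: Conjugation.} Since $\varphi = -i\rho^2 t + i\rho(x\cdot \omega)$ is linear, conjugation yields a constant coefficient operator. A direct computation gives
\[
e^{-i\varphi}(P+q) e^{i\varphi} = P_\rho + q, \qquad P_\rho(D_t, D_x) = P(D_t - i\rho^2, D_x + i\rho\omega),
\]
because $D_t e^{i\varphi} = e^{i\varphi}(D_t + \partial_t \varphi)$ and $D_{x_j} e^{i\varphi} = e^{i\varphi}(D_{x_j} + \partial_{x_j}\varphi)$. Using (\ref{eq_errorterm}), equation (\ref{eq_rN}) becomes
\[
(P_\rho + q) r_N = -\rho^{-N+2} F_N, \qquad \text{on } (0,T)\times \Omega,
\]
to which we apply Proposition \ref{pp_Eestimate} on a bounded open set $X \supset (0,T) \times \Omega$.

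\textbf{Step 2: Lower bound for $\widetilde{P_\rho}$.} Since $P_\rho^{(\alpha)}(\tau,\xi) = P^{(\alpha)}(\tau - i\rho^2, \xi + i\rho \omega)$, we have $\widetilde{P_\rho}(\tau,\xi) = \widetilde{P}(\tau - i\rho^2, \xi + i\rho\omega)$. Extending the bound (\ref{eq_tPgeq}) to complex arguments yields
\[
\widetilde{P_\rho}(\tau,\xi)^2 \geq |2(\xi + i\rho \omega)(1 + i(\tau - i\rho^2))|^2 = 4(|\xi|^2 + \rho^2)((1+\rho^2)^2 + \tau^2) \geq 4\rho^6,
\]
for all real $(\tau, \xi)$, hence $\sup_{(\tau,\xi)\in \mathbb{R}^{n+1}} 1/\widetilde{P_\rho}(\tau,\xi) \leq 1/(2\rho^3)$.

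\textbf{Step 3: Inversion via Neumann series.} Let $E_\rho$ be the inverse operator for $P_\rho$ provided by Proposition \ref{pp_Eestimate} (applied with $Q = 1$). Step 2 gives the norm estimate
\[
\|E_\rho f\|_{H^s(X)} \leq \frac{C}{\rho^3}\|f\|_{H^s(X)},
\]
with $C$ independent of $\rho$. Rewrite the equation as the fixed point problem
\[
(I + E_\rho \circ M_q) r_N = -\rho^{-N+2} E_\rho F_N,
\]
where $M_q$ denotes multiplication by $q \in C^\infty([0,T]\times\bar\Omega)$. Since $M_q$ is bounded on $H^s(X)$, we obtain $\|E_\rho \circ M_q\|_{H^s \to H^s} \leq C\|q\|_{C^s}\rho^{-3}$. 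For $\rho$ sufficiently large, this is less than $1/2$, so $I + E_\rho \circ M_q$ is invertible by a Neumann series with uniformly bounded inverse, giving a unique solution $r_N \in H^s(X)$.

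\textbf{Step 4: Conclude the estimate.} Combining the above,
\[
\|r_N\|_{H^s((0,T)\times\Omega)} \leq 2 \rho^{-N+2} \cdot \frac{C}{\rho^3}\|F_N\|_{H^s(X)} \leq \frac{C_N}{\rho^{N+1}},
\]
using that $\|F_N\|_{H^s}$ is bounded uniformly in $\rho$ by the remark after (\ref{eq_errorterm}). The main subtlety is the lower bound on $\widetilde{P_\rho}$ in Step 2: because $P$ is neither elliptic nor of principal type, one must exploit the mixed nature of the principal symbol, and it is the damping term $\partial_t\Delta$ (responsible for the factor $(1+i\tau)$ in (\ref{eq_tPgeq})) that supplies the extra powers of $\rho$ needed to beat the perturbation $q$ and still leave enough decay to absorb $\rho^{-N+2}$.
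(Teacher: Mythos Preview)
Your proof is correct and follows essentially the same approach as the paper: conjugate by $e^{i\varphi}$ to obtain the shifted constant-coefficient operator $P_\rho = P(D+\zeta^o)$ with $\zeta^o=(-i\rho^2,i\rho\omega)$, use Proposition~\ref{pp_Eestimate} with $Q=1$ together with the lower bound $\widetilde{P_\rho}\geq 2\rho^3$, and then solve by a contraction/Neumann series argument to absorb the lower-order term $q$. The paper phrases the last step as a Banach fixed point for $\mathcal{J}(r)=-E(\rho^{-N+2}F_N+qr)$ rather than inverting $I+E_\rho M_q$, but these are equivalent; one minor wording issue is that your ``rewrite'' in Step~3 is really a sufficient condition (since $E_\rho$ is only a right inverse for $P_\rho$), but that is all that is needed for existence.
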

\begin{proof}
    We are inspired by \cite[Theorem 1.3]{isakov1991completeness}.
    Let $\zetaz = (-i \rho^2,i\rho \omega)$ and we have
    \[
    P^{(\alpha)} (D) e^{i \varphi}= P^{(\alpha)}(\zetaz) e^{i \varphi}. \]
    Then we compute
    \[
    (P+q)(e^{i \varphi } r_N) = e^{i \varphi} (\sum_\alpha
    \frac{1}{\alpha!}P^{(\alpha)}(\zetaz)  D^\alpha r_N
    + q r_N)
    = e^{i \varphi} (P(D + \zetaz) r_N + q r_N).
    \]
    For convenience, we write $\Po(D) = P(D + \zetaz)$.
    Equation (\ref{eq_rN}) can be written as
    \begin{align}\label{eq_wR}
        P_o(D) r_N = -(\rho^{-N+2} F_N  + q r_N)
    \end{align}
    where we write $ (P+q) v_N = \rho^{-N+2} e^{i\varphi} F_N$ in (\ref{eq_errorterm}).

    Now let $E$ be the bounded linear operator from Proposition \ref{pp_Eestimate} for $\Po(D)$, $X = (0, T) \times \Omega$, and $Q(D) = 1$.
    For $r \in H^s(X)$,
    we consider the map $\mathcal{J}(r) = -E(\rho^{-N+2} F_N + q r)$.
    Note that
    by (\ref{eq_Eestimate}), one has
    \[
    \|\mathcal{J}(r)\|_{H^s(X)}
    \leq C\sup_{\xi \in \mathbb{R}^n} \frac{1}{\widetilde{\Po}(\xi)}(\rho^{-N+2}\|F_N\|_{H^s(X)}  + \|q\|_{C^{\lceil s \rceil}([0,T] \times \bar{\Omega})} \|r\|_{H^s(X)}).
    \]
    By (\ref{eq_tPgeq}), for $(\tau, \xi) \in \mathbb{R} \times \mathbb{R}^n$, we have
    \begin{align*}
        \tP_o^2(\zeta) = \tP^2(\zeta + \zetaz)
        &\geq |2(\xi + i\rho \omega)(1 + i(\tau - i \rho^2)|^2 \\
        &= 4|\xi + i\rho \omega|^2 |(1 + \rho^2) +  i\tau |^2\\
        & =4(|\xi|^2 + \rho^2)((1+ \rho^2)^2 + \tau^2) \geq 4 \rho^6.
    \end{align*}
    It follows that
    \begin{align}\label{eq_Jw}
        \|\mathcal{J}(r)\|_{H^s(X)}
        \leq C(\rho^{-N-1}\|F_N\|_{H^s(X)}  + \rho^{-3}\|q\|_{C^{\lceil s \rceil}([0,T] \times \bar{\Omega})} \|r\|_{H^s(X)}).
    \end{align}
    We choose $\rho$ large enough such that $ \rho > \max\{2, C\|q\|_{C^{\lceil s \rceil}([0,T] \times \bar{\Omega})}, CC_{F_N}\}$.
    This implies
    \[
    \|\mathcal{J}(r)\|_{H^s(X)}
    \leq (1+ \|r\|_{H^s(X)}))/2 \leq 1,
    \]
    when $\|r\|_{H^s(X)} \leq 1$.
    Thus, the operator $\mJ$ maps the ball $B_1 = \{r: \|r\|_{H^s(X)} \leq 1\}$ to itself.
    Next, we prove $\mJ$ is contraction when $\rho$ is sufficiently large.
    Indeed, for $r_1, r_2 \in B_1$, we compute
    \begin{align*}
        \|\mJ(r_2) - \mJ(r_1)\|_{H^s(X)}
        = & \|E(q(r_2 - r_1))\|_{H^s(X)} \\
        \leq & \rho^{-3} C \|q\|_{{C^{\lceil s \rceil}([0,T] \times \bar{\Omega})}} \|r_2 - r_1\|_{H^s(X)}
        < \frac{1}{2}\|r_2 - r_1\|_{H^s(X)}.
    \end{align*}
    Then by Banach's contraction theorem, there is a function $r_N$ in $B_1$ satisfying $r_N = \mJ(r_N)$, which solves
    (\ref{eq_wR}).
    Note that $r_N$ satisfies the inequality (\ref{eq_Jw}), and therefore we have
    \[
    \frac{1}{2}\|r_N\|_{H^s(X)} \leq
    \|\mathcal{J}(r_N)\|_{H^s(X)} -  \rho^{-1}\|r_N\|_{H^s(X)}
    \leq  C \rho^{-N-1}\|F_N\|_{H^s(X)}.
    \]

\end{proof}

\begin{remark}
    One can consider other linear phase functions and the corresponding geometric optics construction, for example, the exponentially decaying solution given by  $\varphi = i \rho t + \rho (\omega \cdot x)$.
    We emphasize that we still have the control of $\|r_N\|_{H^s((0, T) \times \Omega)}$, but we do not have the vanishing initial conditions, as the support of $E_0$ is unknown.
\end{remark}

\subsection{The backward problem}\label{subsec_backward_rm}
Consider the geometric optics solution constructed for the backward problem in Section \ref{subsec_backward_goc}.
To construct the remainder term, we write $\Pad = \partial^2_t - \Delta + \partial_t\Delta$. Note it has the symbol
\[
\Pad(\tau, \xi) = -\tau^2 + \xi \cdot \xi - i\tau \xi \cdot \xi,
\qquad \text{for }(\tau, \xi) \in \mathbb{R} \times \mathbb{R}^n.
\]
We have
\begin{align*}
    \tPad^2(\tau, \xi) \geq
    |2 \xi (1 - i\tau)|^2.
\end{align*}
Following the idea of Proposition \ref{pp_remainder}, we prove the following corollary.
\begin{crl}\label{cr_remainder}
    For fixed $N \geq 0$, a large parameter $\rho$, and $\varpi \in \mathbb{S}^{n-1}$, let $w
    _N(t,x)$ be the approximate solution in (\ref{eq_wN}),
    with the phase function $\psi = i \rho^2 t + i\rho(x \cdot \varpi)$ and the smooth amplitude  $b =  \sum_{j=0}^N\rho^{-j}{b_j(t,x, \varpi)}$ constructed above, where $b_0$ is given by (\ref{eq_a0}).
    Then there exists a solution $d_N$ to the equation
    \begin{align}\label{eq_rN_pstar}
        (P^*+q)(e^{i \psi} d_N) = -(P^*+q) w_N \qquad \text{in }(0,T)\times \Omega
    \end{align}
    such that
    \[
    \|d_N\|_{H^s((0,T)\times \Omega)} \leq {C_N}{\rho^{-N-1}}, 
    \]
    where $C_N$ depends on $s, n, N, q, \Omega$ and the choice of $\phi, \chi$. 
\end{crl}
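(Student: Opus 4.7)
The plan is to mirror the proof of Proposition \ref{pp_remainder} essentially line by line, substituting the backward operator $P^*$ for $P$ and the phase $\psi$ for $\varphi$. Since $P^*$ still has constant coefficients, the same machinery from Proposition \ref{pp_Eestimate} applies, and the only thing that could fail is the symbol lower bound on the shifted operator.

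First, I would set $\zetaz = (i\rho^2, i\rho\varpi)$ so that $P^*(D)(e^{i\psi} d) = e^{i\psi} P^*(D+\zetaz) d$, and introduce $P^*_o(D) := P^*(D+\zetaz)$. Running the backward construction in Section \ref{subsec_backward_goc} gives, in complete analogy with (\ref{eq_errorterm}), an error $(P^*+q) w_N = \rho^{-N+2} e^{i\psi} F_N$ with $\|F_N\|_{H^s((0,T)\times \Omega)}$ bounded uniformly in $\rho \gg 1$. Equation (\ref{eq_rN_pstar}) then rewrites as
\[
P^*_o(D) d_N = -\bigl(\rho^{-N+2} F_N + q d_N\bigr).
\]

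Second, I would apply Proposition \ref{pp_Eestimate} to $P^*_o(D)$ on $X = (0,T)\times\Omega$ with $Q(D) = 1$, obtaining a bounded right inverse $E$, and consider the fixed-point map $\mJ(d) = -E(\rho^{-N+2} F_N + qd)$ as in Proposition \ref{pp_remainder}. The decisive step is the analog of (\ref{eq_tPgeq}) for the adjoint symbol. Since $P^*(\tau,\xi) = -\tau^2 + \xi\cdot\xi - i\tau\,\xi\cdot\xi$, a direct computation gives
\[
\widetilde{P^*}(\tau,\xi)^2 \geq |\partial_\xi P^*(\tau,\xi)|^2 = |2\xi(1 - i\tau)|^2,
\]
and after translating the argument by $\zetaz$ the scalar factor becomes $1 - i(\tau + i\rho^2) = (1+\rho^2) - i\tau$, whose modulus squared is $(1+\rho^2)^2 + \tau^2$. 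Therefore
\[
\widetilde{P^*_o}(\tau,\xi)^2 \geq 4(|\xi|^2 + \rho^2)\bigl((1+\rho^2)^2 + \tau^2\bigr) \geq 4\rho^6,
\]
matching the forward bound used in Proposition \ref{pp_remainder}.

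Third, with this symbol estimate established, the contraction and fixed-point argument transfers verbatim: for $\rho$ large enough (depending on $\|q\|_{C^{\lceil s\rceil}}$ and $C_{F_N}$), $\mJ$ maps the unit ball of $H^s(X)$ into itself and is a strict contraction, so Banach's theorem yields $d_N$ solving (\ref{eq_rN_pstar}), and re-inserting $d_N$ into the inequality for $\mJ(d_N)$ gives $\|d_N\|_{H^s(X)} \leq C_N \rho^{-N-1}$. The only genuine thing to verify, and thus the main potential obstacle, is the sign flip $+i\tau \mapsto -i\tau$ in passing from $P$ to $P^*$; this is harmless because only $|1 \mp i\tau|^2 = 1 + \tau^2$ enters, so the lower bound on $\widetilde{P^*_o}$ is identical to that on $\widetilde{P_o}$.
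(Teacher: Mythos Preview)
Your proposal is correct and follows essentially the same approach as the paper: set $\zetaz = (i\rho^2, i\rho\varpi)$, verify the lower bound $\widetilde{P^*_o}(\tau,\xi)^2 \gtrsim \rho^6$ from $|\partial_\xi P^*|^2 = |2\xi(1-i\tau)|^2$, and then invoke the fixed-point argument of Proposition~\ref{pp_remainder} verbatim. Your arithmetic $1 - i(\tau + i\rho^2) = (1+\rho^2) - i\tau$ is in fact cleaner than the paper's (which records $(1-\rho^2)-i\tau$ and then needs large $\rho$ to absorb the resulting $-2\rho^2$ term); either way the bound $\gtrsim \rho^6$ holds.
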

\begin{proof}
    Let $\zetaz = (i \rho^2, i\rho \varpi)$.
    It suffices to verify $\tPad^2(\zeta+ \zetaz) \geq C \rho^6$.
    Indeed, we have
    \begin{align*}
        \tPad^2(\zeta + \zetaz)
        &\geq
        |2(\xi + i\rho \varpi)(1 - i(\tau + i \rho^2)|^2
        = 4|\xi + i\rho \varpi|^2 |(1 - \rho^2) -  i\tau |^2\\
        & =4(|\xi|^2 + \rho^2)((1- \rho^2)^2 + \tau^2) \geq 4 \rho^2(\rho^4 - 2 \rho^2 + 1) \geq 2 \rho^6,
           \end{align*}
       for sufficiently large $\rho$.
\end{proof}
\subsection{Recovering $q$}
We consider the recovery of $q$ from the linear problem, which is related to the first-order linearization of $L_{q, \beta}$.
More explicitly, consider the initial-boundary value problem
\begin{equation}\label{eq_linear}
    \begin{aligned}
        \opq u &= 0, & \  & \mbox{on } \M,\\
        u(t,x) &= h, & \ &\mbox{for  } x\in \partial \Omega,\\
        u = g_0, \  \partial_t u &= g_1, & \ &\mbox{for  } t=0\\
    \end{aligned}
\end{equation}
Let  $(g_0, g_1, h) \in \Gm(\rho, \rho_h)$.
Then this linear problem has a unique solution $u$ satisfying  (\ref{est_nl}),
if the data $(g_0, g_1, h)$ satisfies the $m$th-order compatibility condition (\ref{def_comp_nl}) with $\beta \equiv 0$,
by Proposition \ref{pp_nl}.
We emphasize without the nonlinear term, we do not need to assume the data are sufficiently small.
Thus, for such $(g_0, g_1, h)$, we define the map
\[
\mLlin_{q}: (g_0, g_1, h) \rightarrow (u(T), \partial_t u(T), \partial_\nu u|_{\partial \Omega}).
\]

%
First, we prove the following proposition, which shows all boundary data for the linear problem determines the potential $q$.
\begin{pp}\label{pp_recoverq}
Let $q^{(j)} \in {C^\infty([0,T]\times \bar{\Omega})}$ be two potentials, $j = 1,2$.
For some $\rho, \rho_h>0$, suppose
\[
\mLlin_\qone(g_0,g_1,h) = \mLlin_\qtwo(g_0,g_1,h)
\]
for any $(g_0,g_1,h)\in \Gm(\rho, \rho_h)$ satisfying the $m$th-order compatibility condition (\ref{def_comp_nl}), where $\beta \equiv 0$.
Then we have $\qone = \qtwo$ in $[0,T]\times \bar{\Omega}$.
\end{pp}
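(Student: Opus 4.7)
The plan is to derive an orthogonality identity by integration by parts, test it against the complex geometric optics pair from Section \ref{sec_goc}, and extract the vanishing of $\qone - \qtwo$ from an X-ray-type inversion. By linearity of the problem with $\beta \equiv 0$, the hypothesis for small compatible data extends to all compatible data by rescaling.

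Set $w = u\one - u\two$ and $q := \qtwo - \qone$, where $u\one, u\two$ are the linear solutions with the common data $(g_0, g_1, h)$. Then $w$ solves $(P + \qone) w = q\, u\two$ on $\M$ with homogeneous traces $w = \dt w = 0$ at $t = 0, T$ and $w = \partial_\nu w = 0$ on $\partial \Omega$; the terminal and lateral Neumann vanishings come from $\mLlin_\qone = \mLlin_\qtwo$. Pairing against a test function $v$ satisfying $(P^* + \qone) v = 0$ and integrating by parts, the contributions from $\ddt$, $-\Laplace$, and $-\dt\Laplace$ reduce to boundary pieces involving $w, \dt w, \Laplace w$ at $t = 0, T$ and $w, \partial_\nu w, \partial_\nu \dt w$ on $\partial\Omega$, all of which vanish (the spatial vanishing of $w(0,\cdot)$ and $w(T,\cdot)$ on $\Omega$ forces $\Laplace w$ to vanish there too). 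This yields the orthogonality identity
\[
\int_0^T \int_\Omega q(t,x)\, u\two(t,x)\, v(t,x) \, dx\, dt = 0,
\]
valid for every interior pair with $(P + \qtwo) u\two = 0$ and $(P^* + \qone) v = 0$.

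Next, I substitute the CGO pair: take $u\two = \vN + e^{i\varphi} r_N$ from Proposition \ref{pp_remainder} applied with $\qtwo$ and phase $\varphi = -i\rho^2 t + i\rho(\omega \cdot x)$, and $v = w_N + e^{i\psi} d_N$ from Corollary \ref{cr_remainder} applied with $\qone$ and phase $\psi = i\rho^2 t + i\rho(\varpi \cdot x)$. The choice $\varpi = -\omega$ makes $e^{i(\varphi + \psi)} \equiv 1$, so $u\two v = ab + a\, d_N + b\, r_N + r_N d_N$, where $a, b$ are the two amplitudes. The last three terms are $O(\rho^{-N-1})$ in $L^2(\M)$, while $ab \to a_0 b_0$. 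Passing to the limit gives
\[
\int_0^T \int_\Omega q(t,x)\, a_0(t,x,\omega)\, b_0(t,x,-\omega) \, dx\, dt = 0.
\]
Choosing the orthogonal bases in the two amplitudes to coincide on $\omega^\perp$ gives $a_0 b_0 = \phi^2(t) \prod_{j=2}^n \chi^2(\omega_j \cdot (x - y_0))$, supported in an $\ep$-tube around the line $\{s\omega + y_0 : s \in \mathbb{R}\}$. Shrinking $\chi$ to a delta on $\omega^\perp$ reduces the identity to the vanishing of the weighted line integral $\int_0^T \phi^2(t)\int q(t, s\omega + y_0)\, ds\, dt$ for every $\omega \in S^{n-1}$, every $y_0 \in \partial\Omega$, and every $\phi \in C_0^\infty(0,T)$. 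Varying $\phi$ localizes in $t$, and the classical injectivity of the X-ray transform on compactly supported functions in $\Rn$ (no convexity needed, since every line through $\Omega$ is of this form with $y_0 \in \partial\Omega$) forces $q \equiv 0$ on $[0, T] \times \bar\Omega$.

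The main technical obstacle I anticipate is the integration-by-parts step for the strong damping $-\dt\Laplace$: it generates boundary pieces of the form $[\int_\Omega \Laplace w \cdot v]_0^T$ at the temporal endpoints and $\int_0^T\int_{\partial\Omega}[\partial_\nu w \cdot \dt v - w \cdot \partial_\nu \dt v]\,dS\,dt$ on the lateral boundary, and these only cancel when all four sets of traces of $w$ vanish simultaneously, which is precisely what $\mLlin_\qone = \mLlin_\qtwo$ combined with matching IBVP data provides. A secondary point is that the CGO pair is exponentially large in $\rho$, but the orthogonality identity was derived with no smallness assumption beyond the linearity at $\beta \equiv 0$, so it applies unchanged to any interior pair.
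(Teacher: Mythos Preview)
Your proposal is correct and follows essentially the same approach as the paper: derive the integral identity by integrating by parts against the difference $w$ (whose full Cauchy data vanish by hypothesis), insert the CGO pair with $\varpi=-\omega$ so the exponentials cancel, let $\rho\to\infty$ to isolate $\int q\,a_0 b_0$, and then shrink the tube to extract the X-ray transform of $q$. The only cosmetic differences are that the paper swaps the roles of $q^{(1)}$ and $q^{(2)}$ (constructing the forward CGO for $q^{(1)}$ and the backward one for $q^{(2)}$) and works with $N=0$ rather than general $N$.
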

\begin{proof}
We are inspired by \cite[Theorem 3.3]{isakov1991completeness}.
For large $\rho$ and fixed $\omega \in S^{n-1}$,
let $\uone = e^{i\varphi}(a_0 + r_0)$ be constructed as in Proposition \ref{pp_remainder},
which solves
\[
(P + \qone) \uone = 0
\]
with the phase function $\varphi = -i\rho^2 t + i\rho(x \cdot \omega)$.
Recall in (\ref{eq_a0}), we choose
\[
a_0(t, x,\omega) = \phi(t) \prod_{j = 2}^{n} \chi(\omega_j \cdot (x - y_0)) \coloneqq \phi(t)\tilde{a}_0(x, \omega).
\]
where $\phi \in  C_0^\infty((0, T))$ and $\chi \in C_0^\infty(\mathbb{R})$ is supported in $(-\ep, \ep)$ with $\chi = 1$ near $0$.
Note that $\tilde{a}_0(x, \omega)$ is
a smooth function supported in a small $\ep$-neighborhood of the ray $\gamma(s) = s \omega + y_0$, for a fixed point $y_0 \in \partial \Omega$.
Recall the remainder term satisfies $\|r_0\|_{H^s((0,T) \times \Omega)} \leq C\rho^{-1}$, where we choose $s$ large enough.
This implies $\uone \in H^{s}((0,T) \times \Omega)$.
Then we set
\[
g_0 = \uone(0),\qquad  g_1 = \partial_t \uone(0),\qquad h = \uone|_{\partial \Omega},
\]
which belongs to {$H^{2m+1}(\Omega)\times H^{2m-1}(\Omega) \times C^{2m+2}(\partial \Omega)$} and satisfies the compatibility condition (\ref{def_comp_nl}) with $\beta \equiv 0$.
Thus, there exists a unique
$\utwo$ solving the linear problem (\ref{eq_linear}) with the potential $\qtwo$ and the initial-boundary data $(g_0, g_1, h)$.
We set $u = \utwo - \uone$ and $q = \qtwo - \qone$, which satisfies
\begin{equation*}
    \begin{aligned}
        (P+\qtwo) u &= q\uone, & \  & \mbox{on } \M,\\
        u(t,x) &= 0, & \ &\mbox{for  } x\in \partial \Omega,\\
        u = 0, \  \partial_t u &= 0, & \ &\mbox{for  } t=0.\\
    \end{aligned}
\end{equation*}
Moreover, with the assumption that
$\mLlin_\qone(g_0, g_1, h) = \mLlin_\qtwo(g_0, g_1, h)$, we have
\[
\partial_\nu u|_\Omega = 0, \qquad u(T,x) = \dt u(T,x) = 0.
\]
Now let $w = e^{i\psi}(b_0 + d_0)$ be as in Corollary \ref{cr_remainder},
which solves the backward problem
\[
(\Pad + \qtwo) w = 0
\]
with the phase function $\psi = i\rho^2 t + i\rho(x \cdot \varpi)$.
Here we set $\varpi = -\omega$.
We choose
\[
b_0(t, x,\varphi) = \phi(t)\prod_{j = 2}^{n} \chi(\omega_j \cdot (x - y))
\coloneqq \phi(t)\tilde{b}_0(x, \omega).
\]
as a smooth function supported in a small $\ep$-neighborhood of the same ray $\gamma(s)$, with the same $\phi$.
In this case, the remainder term satisfies
$\|d_0\|_{H^s((0,T) \times \Omega)} \leq C \rho^{-1}$, for sufficiently large $s$.  
We multiply $w$ with the equation above and integrate over $t,x$ to have
\begin{align*}
0 = \int_0^T \int_\Omega (P+\qtwo) uw - q\uone w \diff x \diff t
= \int_0^T \int_\Omega u (\Pad+\qtwo) w - q\uone w \diff x \diff t,
\end{align*}
which implies
\[
\int_0^T \int_\Omega - q\uone w \diff x \diff t = 0.
\]
Then we plug in the expansion of $\uone, w$ to have
\begin{align*}
\mI_q = \int_0^T \int_\Omega q a_0 b_0 \diff x \diff t
=  -\int_0^T \int_\Omega q ( r_0 b_0 + a_0 d_0)  \diff x \diff t
\leq C \rho^{-1}.
\end{align*}
When $\rho \rightarrow \infty$, this implies $\mathcal{I}_q = 0$.
Since $\phi \in C_0^\infty((0,T))$ is arbitrary, we must have
\[
 \int_\Omega  q(t,x)
 \tilde{a}_0 \tilde{b}_0\diff x = 0, \qquad t \in [0,T].
\]
Note that $\tilde{a}_0$ and $\tilde{b}_0$ are both supported in a small $\ep$-neighborhood of the ray $\gamma(s) = s \omega + y$.
Let $\ep \rightarrow 0$ and we can extract the line integral
\[
X q(\gamma) = \int q(t, \gamma(s)) \diff s = 0, \qquad t \in [0,T].
\]
Since $\omega \in S^{n-1}$ and $y_0\in \partial \Omega$ are arbitrary,
the X-ray transform of $q$ over all rays vanishes.
With the assumption on $\Omega$, we have $q = \qtwo - \qone = 0$.
Indeed, the X-ray transform is injective on $L^1(\Rn)$ by the Fourier Slice Theorem, for example, see \cite[Chapter 2]{book_SU}.
One can extend $q \in C^\infty([0,T] \times \bar{\Omega})$ to $L^1(\Rn)$ by setting it equal to zero in $\Rn \setminus \bar{\Omega}$, as $\Omega$ is bounded.
\end{proof}

Moreover, when we have $\dt^j \beta(0,x) = 0$ for $x \in \Omega$ and $j = 0, \ldots, m$, the data $(g_0, g_1, h)$ with the $m$th-order compatibility for the linear problem also satisfies the $m$th-order compatibility for the nonlinear problem, see (\ref{def_comp_nl}).
In this case, we can say
\begin{align}\label{eq_Lin}
    \mLlin_{q}(g_0, g_1, h) = 
    \partial_\vep \mL_{q, \beta}(\vep g_0, \vep g_1, \vep h)|_{\vep=0},
\end{align}
for small $(g_0, g_1, h) \in \Gm$.
Therefore, Proposition \ref{pp_recoverq} implies the first-order linearization of $L_{q,\beta}$ determines $q$, under this assumption on $\beta$.
\subsection{The second-order Linearization}\label{subsec_2nd}
In this subsection, let $\vep_1, \vep_2 > 0$ be small parameters and let
$u_{\vep_1,\vep_2}$ be the solution to the nonlinear problem (\ref{eq_nl}) with data
\begin{align}\label{def_2data}
    (g_0, g_1, h) = \vep_1 (g_{0,1}, g_{1,1}, h_1)  + \vep_2 (g_{0,2}, g_{1,2}, h_2).
\end{align}
We consider the second-order linearization
\[
U_2 = \partial_{\vep_1} \partial_{\vep_2} u_{\vep_1, \vep_2}|_{\vep_1 = \vep_2 = 0}.
\]
Let $u_j$ be solutions to the linear problem
\begin{align}\label{LS}
    \begin{aligned}
        (P+q) u_j(t,x)  &= 0, & \  & \mbox{on } \M,\\
        u_j(t,x) &= f_j, & \ &\mbox{for  } x\in \partial \Omega,\\
        u_j = g_{0,j},\  \dt u_j &= g_{1,j}, & \ &\mbox{for  } t=0.\\
    \end{aligned}
\end{align}
Then $U_2$ solves
\begin{align*}
    \begin{aligned}
        (P+q) U_2  &= \beta(t,x) \dt^2 (u_1u_2), & \  & \mbox{on } \M,\\
        U_2(t,x) &= 0, & \ &\mbox{for  } x\in \partial \Omega,\\
        U_2 = \partial_t U_2 &= 0, & \ &\mbox{for  } t=0.\\
    \end{aligned}
\end{align*}
Now for any $u_0$ solving $(\Pad + q) u_0 = 0$,
we integrate by parts to have
\begin{align}\label{eq_Ibeta}
    \int_0^T \int_\Omega \beta(t,x) \partial_t^2(u_1 u_2) u_0 \diff x \diff t
    =  
    \int_\Omega \dt U_2(T) &u_0(T) -  U_2(T)(\dt u_0(T) - \Delta u_0(T)) \diff x\\
    &- \int_0^T (\partial_\nu U_2 u_0)|_{\partial \Omega}
    + (\partial_\nu \dt U_2 u_0)|_{\partial \Omega} \diff t. \nonumber
\end{align}
\subsection{Recovering $\beta$.}\label{subsec_beta}
Now we would like to recover the nonlinear coefficient from all boundary measurements.
Suppose we have two coefficients $\betaone, \betatwo \in C^{\infty}([0,T] \times \bar{\Omega})$.
Let $\uk_{\vep_1,\vep_2}$ be the solutions to the nonlinear problem (\ref{eq_nl})  with $\betak$ and data $(g_0, g_1, h)$ given in (\ref{def_2data}), for $k=1,2$.
We consider the corresponding source-to-solution maps $\mL_\qbk$ and suppose
\[
\mL_\qbone(g_0, g_1, h) = \mL_\qbtwo(g_0, g_1, h),
\]
for any small and compatible data $(g_0, g_1, h)$.

First, according to (\ref{eq_Lin}) and Proposition \ref{pp_recoverq}, we have $\qone = \qtwo$.
Next, we would like to construct approximate solutions $u_j$ to the linear problems using Section \ref{sec_goc}, which are corresponding to the boundary data $(g_{0,j}, g_{1,j}, f_j)$, for $j = 1,2,0$.
For this purpose,
let $\omega_1, \omega_2, \ldots, \omega_{n}$ be an orthonormal basis for $\mathbb{R}^n$.
Let $\varpi =  - (\omega_1 + \omega_2)/\sqrt{2} \in S^{n-1}$ and we choose $\varpi_2, \ldots \varpi_n$ such that they form an orthonormal basis with $\varpi$.
For a fixed point $x_0 \in \Omega$,
we choose $y_1, y_2, y_0\in \partial \Omega$ such that the three rays
\[
\gamma_j(s) = s \omega_j + y_j, 
\ \text{ for } j = 1,2,
\qquad \text{and }
\gamma_0(s) = s \varpi + y_0,
\]
intersect at $x_0$.
We construct approximate solutions
\begin{align*}
    u_{j}  &=  e^{\rho^2t - \rho (w_j \cdot x)} (a_{0,j} + r_{0,j}) ,\quad j = 1,2,
\end{align*}
to $(P + \qone) u_j =(P + \qtwo) u_j  = 0$,
according to Proposition \ref{pp_remainder}.
Here the amplitudes given by
\begin{align*}
    a_{0,1}(t,x,\omega_j) =  \phi(t) \prod_{i \neq 1} \chi(\omega_i \cdot (x - y_1)) \coloneqq \phi(t)\tilde{a}_{0,1}(x, \omega),\\
    a_{0,2}(t,x,\omega_j) =  \phi(t) \prod_{i \neq 2} \chi(\omega_i \cdot (x - y_2)) \coloneqq \phi(t)\tilde{a}_{0,2}(x, \omega),
\end{align*}
are smooth functions supported in a small $\ep$-neighborhood of the ray $\gamma_1(s)$ and $\gamma_2(s)$ respectively.
Moreover, the remainder term is given by  $\|r_{0,j}\|_{H^s((0,T) \times \Omega)} \leq C\rho^{-1}$, for $j = 1,2$, where $s$ is chosen to be large enough.
Then we set
\[
(g_{0,j}, g_{1,j}, h_j) = (u_j(0), \dt u_j(0), u_j|_{\partial \Omega}), \quad j =1,2,
\]
which satisfy the compatibility condition (\ref{def_comp_nl}), under the assumption $\dt^j \beta(0,x) = 0$ for $x \in \Omega$ and $j = 0, \ldots, m$.
For large $s$, we have $(g_0, g_1, h) \in \Gm$.
Then with small $\vep_1, \vep_2$, the initial-boundary data $(g_0, g_1, h)$ given by (\ref{def_2data}) allows a unique solution to the nonlinear problem.
Thus, let $\uk_{\vep_1,\vep_2}$ be the solutions to the nonlinear problem (\ref{eq_nl})  with $\betak$ and   (\ref{def_2data}).
Note the second-order linearization $U_2^{(k)}$ satisfies
\[
U_2^{(1)} (T) = U_2^{(2)} (T), \quad
\dt U_2^{(1)}(T) = \dt U_2^{(2)}(T), \quad
\partial_\nu U_2^{(1)}|_{\partial \Omega} = \partial_\nu U_2^{(2)}|_{\partial \Omega},
\]
since $\mL_\qbone(g_0, g_1, h) = \mL_\qbtwo(g_0, g_1, h)$.

Next, we construct approximate solutions $u_0$ to the backward problem $(\Pad + \qone) u_0 =(\Pad + \qtwo) u_0  = 0$ by
\begin{align*}
    u_{0}  &=  e^{-2\rho^2t - \sqrt{2}\rho (\varpi \cdot x)} (b_{0} +  d_{0}),
\end{align*}
according to Corollary \ref{cr_remainder}.
Here the amplitudes given by
\begin{align*}
    b_{0}(t,x,\varpi) =  \phi(t) \prod_{i \neq 1} \chi(\varpi_i \cdot (x - y_0)) \coloneqq \phi(t)\tilde{b}_{0}(x, \varpi),
\end{align*}
is a smooth function supported in a small $\ep$-neighborhood of the ray $\gamma_0(s)$.
In this case, the remainder term is given by  $\|d_{0}\|_{H^s((0,T) \times \Omega)} \leq C\rho^{-1}$, where $s$ is large enough.

Now we plug in $u_1,u_2,u_0, U_2^{(1)}, U_2^{(2)}$ in (\ref{eq_Ibeta}) to have
\begin{align*}
    \int_0^T \int_\Omega (\betaone- \betatwo)  \ddt(u_1 u_2) u_0 \diff x \diff t = 0.
\end{align*}
We compute
\begin{align*}
    \ddt(u_1 u_2) &=  \ddt (e^{2 \rho^2 t - \rho (\omega_1 + \omega_2) \cdot x} (a_{0,1} +r_{0,1})(a_{0,2} +  r_{0,2}))
    = e^{2 \rho^2 t -\rho (\omega_1 + \omega_2) \cdot x} (4 \rho^4  \phi^2(t) \tilde{a}_{0,1}\tilde{a}_{0,2} + R_1),
\end{align*}
where we write
\begin{align*}
    R_1 &= 4 \rho^4 (r_{0,1}(a_{0,2} +  r_{0,2}) + (a_{0,1} + r_{0,1}) r_{0,2})\\
    &-2 \rho^2 \dt((a_{0,1} + r_{0,1})(a_{0,2}  + r_{0,2})) + \ddt ((a_{0,1} +  r_{0,1})(a_{0,2} + r_{0,2})).
\end{align*}
A straightforward computation shows that $\|R_1\|_{H^{s-2}((0,T)\times \Omega)} \leq C \rho^3$.
Then we have
\begin{align*}
    0 &= \int_0^T \int_\Omega (\betaone- \betatwo)    (4 \rho^4  \phi^2(t) \tilde{a}_{0,1}\tilde{a}_{0,2} + R_1) (b_{0} + d_{0}) \diff x \diff t \\
    & = \int_0^T \int_\Omega (\betaone- \betatwo)    (4 \rho^4  \phi^3(t) \tilde{a}_{0,1}\tilde{a}_{0,2} \tilde{b}_{0}+ R_2) \diff x \diff t
\end{align*}
where we write
\[
R_2  = R_1 (b_{0} + d_{0})  + (4 \rho^4  \phi^2(t) \tilde{a}_{0,1}\tilde{a}_{0,2} + R_1) d_{0}.
\]
Similarly, we have $\|R_2\|_{H^{s-2}((0,T)\times \Omega)}  \leq C \rho^3$.
It follows that
\begin{align*}
    \int_0^T \int_\Omega (\betaone- \betatwo)   \phi^3(t) \tilde{a}_{0,1}\tilde{a}_{0,2} \tilde{b}_{0} \diff x \diff t = 0.
\end{align*}
Since $\phi \in C_0^\infty((0,T))$ is arbitrary, this implies
\[
\int_\Omega (\betaone (t,x)- \betatwo(t,x)) \tilde{a}_{0,1}\tilde{a}_{0,2} \tilde{b}_{0}
\diff x = 0, \qquad t \in [0,T].
\]
Recall the rays $\gamma_1, \gamma_2, \gamma_0$ intersect at the fixed point $x_0 \in \Omega$.
Then we can assume $\tilde{a}_{0,1}\tilde{a}_{0,2} \tilde{b}_{0}$ is supported in a small neighborhood $N_\ep$ of $x_0$, which is contained in a ball of radius $\ep$.
By shrinking the support of $\chi$,
we have
\[
\betaone(t,x_0)- \betatwo(t,x_0) = \lim_{\ep \rightarrow 0} \frac{1}{|N_\ep|}\int_\Omega (\betaone(t,x)- \betatwo(t,x)) \tilde{a}_{0,1}\tilde{a}_{0,2} \tilde{b}_{0}
\diff x
= 0.
\]
Since $x_0 \in \Omega$ is arbitrary, we must have $\betaone = \betatwo$ in $[0,T] \times \bar{\Omega}$.

\section{Recovery from the DN map}\label{sec_gocnew}
In this section, we would like to improve the results in Section \ref{sec_allbd} using fundamental solutions constructed for the characteristic Cauchy problem, see Section \ref{subsec_cauchy}.
We prove the recovery of $q, \beta$ from the lateral boundary  measurements.
\subsection{The reminder term}\label{subsec_reminder_new}
We reconsider the approximate solution constructed for the linear problem.
The aim is to construct a reminder term $r_N$ satisfying
\[
\opq(v + e^{i\varphi} r_N) = 0 \qquad \text{in }(0, T) \times \Omega
\]
with $r_N$ relatively small,
as well as, with the zero initial conditions.
Recall
we write $P = \partial^2_t - \Delta - \partial_t\Delta$.
Now we prove the following proposition about the reminder term.
\begin{pp}\label{pp_reminder_new}
    For $N \geq 0$, a large parameter $\rho$, and $\omega \in \mathbb{S}^{n-1}$, let $v
    _N(t,x)$ be the approximate solution constructed in (\ref{eq_vN}),
    with the phase function $\varphi = -i \rho^2 t + i\rho(x \cdot \omega)$ and the smooth amplitude  $a =  \sum_{j=0}^N\rho^{-j}{a_j(t,x, \omega)}$,
    where $a_0$ is given by (\ref{eq_a0})
    and satisfies  (\ref{transport_1}, \ref{transport_2},
    \ref{transport_j}) for $j = 1, \ldots, N$.
    Then there exists a solution $r_N$ to the equation
    \begin{align}\label{eq_rN_new}
        (P+q)(e^{i \varphi} r_N) = -(P+q) v_N
        \qquad \text{in }(0, T) \times \Omega
    \end{align}
    {with $r_N(0,x) = \partial_t r_N(0, x) = 0$},
    such that for $s \geq 0$, we have
    \[
    \|r_N\|_{H^s((0,T)\times \Omega)} \leq {C_N}{\rho^{-N-1}}, 
    \]
    where $C_N$ depends on $q, n, \Omega$ and the choice of $\phi, \chi$.
\end{pp}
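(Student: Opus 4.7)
The plan is to run the argument of Proposition \ref{pp_remainder} essentially verbatim, but to replace the operator $E$ from Proposition \ref{pp_Eestimate} by the half--space operator $\hE$ from Proposition \ref{pp_Eestimate_new}. The point of this substitution is that every image $\hE g$ is automatically supported in the chosen half space $H$, so the fixed point $r_N$ inherits $\supp r_N \subset H$, which delivers the vanishing initial conditions at $t = 0$.

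First, as in Proposition \ref{pp_remainder}, I would recast (\ref{eq_rN_new}) in the form $\Po(D) r_N = -(\rho^{-N+2} F_N + q r_N)$ with $\Po(D) = P(D + \zetaz)$ and $\zetaz = (-i\rho^2, i\rho\omega)$. Because $a_0$ is supported in $\supp \phi \times \Omega$ with $\supp \phi \subset (0,T)$, and because the transport operator $\Tzero$ is independent of $t$, each $a_j$, and hence $F_N$, is supported in $[t_1, t_2] \times \Omega$ for some $0 < t_1 < t_2 < T$. Fix $\vep \in (0, t_1)$ and set $H = \{t \ge \vep\}$; then $\supp F_N \subset H$.

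Next, I would verify the hypotheses needed to invoke Proposition \ref{pp_Eestimate_new} for $\Po$ on $H$. The principal part of $P$ is $-\dt\Laplace$ with symbol $i\tau|\xi|^2$, which vanishes on the normal $N$ to $\{t = \vep\}$, so the boundary is characteristic. A direct computation of the roots of $\Po(\zeta + \sigma N) = 0$ as a quadratic in $\sigma$ shows that, on any fixed bounded ball in $\zeta$-space, every such root has imaginary part bounded below by a quantity of order $\rho^2$, inherited from the shift in the $\tau$-variable by $-i\rho^2$ contained in $\zetaz$. Thus condition (\refb) is satisfied, and Proposition \ref{pp_Eestimate_new} produces a bounded linear operator $\hE$ with $\supp \hE g \subset H$ and
\[
\|\hE g\|_{H^s(\tX)} \le C \sup_{\xi} \frac{1}{\widetilde{\Po}(\xi)} \|g\|_{H^s(\tX)}.
\]

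Finally, I would define $\mJ(r) = -\hE(\rho^{-N+2} F_N + q r)$ and run the contraction argument exactly as in Proposition \ref{pp_remainder}, using the bound $\widetilde{\Po}(\xi)^2 \ge 4\rho^6$ from (\ref{eq_tPgeq}). For $\rho$ sufficiently large, $\mJ$ is a contraction on the unit ball of $H^s(X)$, mapping that ball into functions supported in $H$; its fixed point $r_N$ solves (\ref{eq_rN_new}) in $X$ and satisfies $\supp r_N \subset H \subset \{t \ge \vep\}$, so $r_N(0,x) = \dt r_N(0,x) = 0$ and $\|r_N\|_{H^s} \le C_N \rho^{-N-1}$. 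The main obstacle is the quantitative verification of condition (\refb) for $\Po$: one must confirm that the constants $A_1, A_2$, and the resulting constant $C$ in Proposition \ref{pp_Eestimate_new}, grow at most polynomially in $\rho$, so that the $\rho^{-6}$ gain from $\widetilde{\Po}^{-1}$ is still decisive; the remainder of the argument is then a direct transcription of the earlier proof.
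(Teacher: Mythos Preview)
Your overall architecture---reduce to $\Po(D)r_N = -(\rho^{-N+2}F_N + qr_N)$, note that $\supp F_N \subset H = \{t \ge \vep\}$, invoke Proposition~\ref{pp_Eestimate_new} in place of Proposition~\ref{pp_Eestimate}, and run the same contraction on the unit ball of $H^s$---is exactly the paper's.  The gap is in the step you yourself flag as the ``main obstacle,'' and your proposed resolution of it is off target.

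Your heuristic that ``every such root has imaginary part bounded below by a quantity of order $\rho^2$'' is not correct: if you carry out the computation (this is the paper's Lemma~\ref{lm_A1A2}), the two roots of $\Po(\zeta + \sigma N) = 0$ at a real point $(\tau, r\theta)$ have imaginary parts $\tfrac12\bigl((r^2+\rho^2)\pm b\bigr)$ with $|b| \le |r^2-\rho^2-2|$, so one of the roots can have imaginary part as small as $r^2-1$, which is near $-1$ for $r$ near $0$ and certainly not of order $\rho^2$.  More importantly, your fallback plan---let $A_1,A_2$ depend on $\rho$ and hope the constant in Proposition~\ref{pp_Eestimate_new} grows only polynomially---is both unnecessary and unverified: the dependence of that constant on $A_1,A_2$ traces back to H\"ormander's Theorem~12.8.12 and is not made explicit there, so you would be taking on a separate project.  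The paper's route is much cleaner: condition~(\refb) only asks for $\sup_B \mathrm{Im}\,\sigma \ge A_2$, and since any ball of radius $1$ with real center contains a real point with $|\xi| = r \ge 1$, the computation above gives $\mathrm{Im}\,\sigma_\pm \ge r^2 - 1 \ge 0$ at that point.  Thus $A_1 = 1$, $A_2 = 0$ work \emph{uniformly in $\rho$}, the constant $C$ in Proposition~\ref{pp_Eestimate_new} is $\rho$-independent, and the $\rho^{-3}$ gain from $\widetilde{\Po}^{-1}$ closes the contraction exactly as in Proposition~\ref{pp_remainder}.  The missing piece in your write-up is precisely this explicit verification (the paper isolates it as Lemma~\ref{lm_A1A2}); once you have it, everything else you wrote goes through.
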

\begin{proof}
    We would like to show the reminder term $r_N$ is relatively small for large $\rho$ and additionally it is supported in $t \geq \vep$ for some small number $\vep > 0$. This implies it satisfies vanishing initial conditions at $t = 0$.
    For this purpose, we recall some conclusions in Section \ref{subsec_cauchy} and modify the notations there for our model.
    In the following, let
    \begin{align*}
        N = (1, 0, \ldots, 0) \in \mathbb{R} \times \Rn, \qquad H = \{(t,x): (t,x) \cdot N \geq \vep\}.
    \end{align*}
    Note that here we translate the half space by a parameter $\vep$ independent of $\rho$ and it does not affect the estimate in Proposition \ref{pp_Eestimate_new}.
    As before, we rewrite (\ref{eq_rN_new}) using $\zetaz = (-i\rho, i\rho \omega)$ to have
    \[
    (P+q)(e^{i \varphi } r_N)
    = e^{i \varphi} (P(D + \zetaz) r_N + q r_N)
    = e^{i \varphi} (\Po r_N + q r_N).
    \]
    Equation (\ref{eq_rN_new}) can be written as
    \begin{align}\label{eq_wR_new}
        P_o(D) r_N = -(\rho^{-N+2} F_N  + q r_N)
    \end{align}
    where we write $ (P+q) v_N = \rho^{-N+2} e^{i\varphi} F_N$ in (\ref{eq_errorterm}).
    Note $ F_N$ is supported in $(0, T) \time \Omega$ with respect to $t$.
    Then there exists  $\vep > 0$ such that its support in $t$ is contained in the set of $(\vep, T)$.
    Thus, we have $\supp F_N \subset H$.

    Note for $P_o(D)$, the boundary $\partial H$ is characteristic, since its principal symbol satisfies $P_{o,3}(N) = 0$.
    There is no uniqueness for the initial value problem $P_o(D) u = f$ with $f$ supported in $H$.
    As  mentioned before, we would like to find a solution supported in $H$ with the desired estimate, using Proposition \ref{pp_Eestimate_new}.
    For this purpose, we prove in Lemma \ref{lm_A1A2} that for our operator $P_o$, there exist $A_1, A_2$ independent of $\rho$ such that condition (\refb) is satisfied.
    With this lemma, we set $A_1 = 1$ and $A_2 = 0$. Thus,
    we have $P_o(D)$ satisfies (\refb) with $A_1 = A_2 + 1$, which is independent of $\rho$.

    Now let $X = (0, T) \times \Omega$ and $\tX$ be a small open neighborhood of $X$.
    Let $X_0$ be a small open neighborhood of $\tX- \tX$, which is bounded in $\Rn$.
    Then we choose $E_H$ as the bounded linear operator in Proposition \ref{pp_Eestimate_new} for $\Po(D)$ with $X, \tX, X_0$.
    For every $r \in H^s(\tX)$ with $\supp r \subset H$,
    we consider the map $\mJ(r) = -E_H(\rho^{-N+2}F_N + q r)$,
    which has support in $H$ and satisfies
    \[
    P_o(D)\mJ(r) = - (\rho^{-N+2}F_N + q r) \quad \text{in } X,
    \]
    with the estimate
    \[
    \| \mJ(r)\|_{H^s(\tX)} \leq C  \sup_{\zeta \in \mathbb{R}^{n+1}} \frac{1}{\tP_o(\zeta)} \|\rho^{-N+2}F_N + q r\|_{H^s(\tX)},
    \]
    where $C$ is independent of $\rho$.
    On the other hand, we compute
    \begin{align*}
        \tP_o^2(\zeta) = \tP^2(\zeta + \zetaz)
        \geq |2(1 + i(\tau - i\rho^2))(\xi + i\rho \eta)|^2
        = 4((1 + \rho^2)^2 + \tau^2)(|\xi|^2 + \rho^2)  \geq 4 \rho^6.
    \end{align*}
    It follows that for any $r \in H^s(\tX)$ supported in $H$, we have
    \begin{align}\label{eq_Jw_new}
        \| \mJ(r)\|_{H^s(\tX)} \leq  {C}({\rho}^{-N-1} \|F_N\|_{H^s(\tX)}  + \rho^{-3}\|q\|_{C^{\lceil s \rceil}([0,T] \times \bar{\Omega})} \|r\|_{H^s(\tX)}).
    \end{align}
    where we extend $q\in C^{\infty}(\bar{X})$ to a function in  $C^{\infty}(\tX)$.
    This motivates us to define
    \[
    H^{s,H}(\tX) = \{u: u \in H^s(\tX) \text{ with $\supp u \subset H$}\}.
    \]
    If we choose  $\rho$ large enough such that $ \rho > \max\{2C\|q\|_{C^{\lceil s \rceil}([0,T] \times \bar{\Omega})} , 2CC_{F_0}\}$, then we have
    \[
    \|\mJ (r)\|_{H^s(\tX)} \leq (1+  \|r\|_{H^s(\tX)})/2 \leq 1,
    \]
    for any $\|r\|_{H^s(\tX)} \leq 1$.
    Thus, the operator $\mJ$ maps the ball $B_1 = \{r \in H^{s,H}(\tX): \|r\|_{H^{s}(\tX)} \leq 1\}$ to itself.
    Next, we prove $\mJ$ is a contraction for large $\rho$.
    Indeed, for $r_1, r_2 \in H^s(\tX)$,
    we compute
    \begin{align*}
        \|\mJ(r_2) - \mJ(r_1)\|_{H^s(\tX)}
        = & \|E_H(q(r_2 - r_1))\|_{H^s(\tX)} \\
        \leq & \rho^{-3} C \|q\|_{C^{\lceil s \rceil}([0,T] \times \bar{\Omega})}  \|r_2 - r_1\|_{H^s(\tX)}
        < \frac{1}{2}\|r_2 - r_1\|_{H^s(\tX)}.
    \end{align*}
    Then by Banach's contraction theorem, there is a function $r_N$ in $B_1$ satisfying $r_N = \mJ(r_N)$, which solves
    (\ref{eq_rN_new}).
    Note that $r_N$ satisfies the inequality (\ref{eq_Jw_new}), and therefore we have
    \[
    \frac{1}{2}\|r_N\|_{H^s(\tX)} =
    \|\mathcal{J}(r_N)\|_{H^s(\tX)} -  \frac{1}{2}\|r_N\|_{H^s(\tX)}
    \leq  C \rho^{-N-1}\|F_N\|_{H^s(\tX)}.
    \]
    In particular, this implies $\|r_N\|_{H^s(X)} \leq  C_N \rho^{-N-1}\|F_N\|_{H^s(\tX)},$ for some constant $C_N$ independent of $\rho$.
\end{proof}

\begin{lm}\label{lm_A1A2}
    Let $P$, $\zeta_0$, $N$ be defined as above.
    Suppose $\sigma(\zeta)$ is a solution to $\Po(\zeta + \sigma N)$, where $\zeta = (\tau, \xi) \in \mathbb{C} \times \mathbb{C}^n$.
    If $\sigma(\zeta)$ is analytic and single-valued in a ball $B \subset \mathbb{C} \times \mathbb{C}^n$ with real center and radius $A_1 = 1$,
    then we have
    \[
    \sup_{\zeta \in B} \mathrm{Im} (\sigma(\zeta)) \geq 0.
    \]
\end{lm}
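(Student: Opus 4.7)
My plan is to solve for $\sigma$ explicitly and then evaluate it at a carefully chosen interior point of $B$. Since $\partial H$ is characteristic for $P$ (the principal symbol $i\tau|\xi|^2$ vanishes on $N$), the equation $P_o(\zeta + \sigma N) = 0$ is only quadratic in $\sigma$. Writing $\tilde{\tau} = \tau + \sigma - i\rho^2$ and $\tilde{\xi} = \xi + i\rho\omega$, it reduces to $\tilde{\tau}^2 - iA\tilde{\tau} - A = 0$ with $A = \tilde{\xi}\cdot\tilde{\xi}$, so that $\tilde{\tau} = (iA \pm \sqrt{A(4-A)})/2$ and $\sigma = \tilde{\tau} - \tau + i\rho^2$. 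Single-valuedness on $B$ corresponds to a fixed branch of the square root throughout $B$, which I assume has been chosen.

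The decisive observation is that $A = (\xi + i\rho\omega)\cdot(\xi + i\rho\omega)$ depends only on $\xi$, not on $\tau$. I would therefore evaluate $\sigma$ at $\zeta_\epsilon = (\tau_0 - i(1-\epsilon),\, \xi_0) \in B$ for small $\epsilon > 0$. At this point both $A = A_0$ and $\sqrt{A_0(4-A_0)}$ coincide with their values at the real center $\zeta_0$, while the imaginary shift in $\tau$ contributes $(1-\epsilon)$ directly to $\mathrm{Im}(\sigma)$ via the $-\tau$ term. Using $\mathrm{Im}(\tilde{\tau}) = (\mathrm{Re}(A_0) \pm K_0)/2 = (|\xi_0|^2 - \rho^2 \pm K_0)/2$, a direct computation gives
\[
\mathrm{Im}(\sigma(\zeta_\epsilon)) = \frac{|\xi_0|^2 + \rho^2 \pm K_0}{2} + (1-\epsilon),
\qquad K_0 := \mathrm{Im}\sqrt{A_0(4-A_0)}.
\]

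To close the proof I would bound $|K_0|$ using $|K_0|^2 \leq |A_0(4-A_0)| \leq |A_0|(|A_0|+4)$ together with the Cauchy--Schwarz estimate
\[
|A_0|^2 = (|\xi_0|^2 - \rho^2)^2 + 4\rho^2 (\xi_0\cdot\omega)^2 \leq (|\xi_0|^2 + \rho^2)^2,
\]
which yields $|K_0| \leq \sqrt{M(M+4)} \leq M + 2$ where $M := |\xi_0|^2 + \rho^2$. Substituting into the previous display gives $\mathrm{Im}(\sigma(\zeta_\epsilon)) \geq (M - (M+2))/2 + (1-\epsilon) = -\epsilon$, and letting $\epsilon \to 0^+$ produces $\sup_{\zeta \in B} \mathrm{Im}(\sigma(\zeta)) \geq 0$.

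The delicate aspect is that the perturbation direction $-iN$ must do two things simultaneously: preserve $A$ (so that the only $\rho$-dependent ingredient of the formula, the square root, remains constant along the path from $\zeta_0$ to $\zeta_\epsilon$) and add a positive real number to $\mathrm{Im}(\sigma)$ through the $-\tau$ term. Both happen precisely because $N$ is the time direction and $A$ involves only the spatial variable. Moreover, the bound $|K_0| \leq M+2$ is tight enough to be offset by the shift $(1-\epsilon)$ exactly when the radius is $A_1 = 1$; a strictly smaller radius would fail to close the argument, which explains why the constants $A_1 = 1$, $A_2 = 0$ are the natural choice and why the conclusion is uniform in $\rho$.
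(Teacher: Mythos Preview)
Your proof is correct and takes a genuinely different route from the paper's. The paper also starts from the explicit roots $\sigma_\pm(\tau,\xi)=\tfrac12\bigl(iR(\xi+i\rho\omega)\pm\sqrt{-R^2+4R}\bigr)+i\rho^2-\tau$, but instead of moving into the complex $\tau$-direction it picks a \emph{real} point $(\tau,r\theta)\in B$ with $r=|\xi|\ge 1$ (available because the radius is $1$) and shows directly that $\mathrm{Im}\,\sigma_\pm(\tau,r\theta)\ge 0$ there. To do so it needs the sharper bound $\bigl(\mathrm{Im}\sqrt{-R^2+4R}\bigr)^2\le (c-2)^2$ with $c=r^2-\rho^2$, which is obtained by a short but nontrivial algebraic computation; the conclusion then follows from $(c+2\rho^2)^2-(c-2)^2=4(\rho^2+1)(r^2-1)\ge 0$. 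Your approach trades that sharper square-root estimate for the cruder bound $|K_0|\le M+2$, which works only because the perturbation $-i(1-\epsilon)N$ in the $\tau$-slot leaves $A$ (hence the square root) untouched while contributing $(1-\epsilon)$ to $\mathrm{Im}\,\sigma$. The upshot: the paper's argument gives the inequality at a single real point without any limiting procedure, at the cost of a more delicate discriminant estimate; your argument is algebraically lighter and makes transparent why the constants $A_1=1$, $A_2=0$ are exactly right, but requires passing to the limit $\epsilon\to 0$.
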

\begin{proof}
    First, we would like to solve $\tau(\xi)$ from $P(\tau, \xi) = 0$ for $\xi \in \mathbb{C}^n$.
    Indeed, the equation $-\tau^2 + i \tau \xi \cdot \xi + \xi \cdot \xi = 0$ have two complex roots
    \begin{align*}
        \tau_\pm(\xi)
        = \frac{1}{2}(iR(\xi)  \pm \sqrt{-R^2(\xi) + 4 R (\xi)}),
        \qquad \text{where }R(\xi) = \xi \cdot \xi.
    \end{align*}
    Recall $\zetaz = (-i \rho^2, i \rho \omega)$.
    Then the roots for $P_o(\zeta + \sigma N)  = P(\zeta + \zeta^0 + \sigma N) = 0$ are given by
    \begin{align*}
        \sigma_\pm(\tau, \xi) &= \frac{1}{2}(iR(\xi + i \rho \omega)
        \pm \sqrt{\triangle(\xi + i \rho \omega)})  + i\rho^2 - \tau.
    \end{align*}
    For a ball $B$ with real center and radius $1$, we can choose a point $(\tau, \xi)\in \mathbb{R} \times \Rn$ in $B$ such that $(\tau, \xi) = (\tau, r \theta)$, where $r \geq 1$ and $\theta \in \mathbb{S}^{n-1}$.
    First we compute
    \[
    R(\xi + i \rho \omega) = (r^2 - \rho^2) + i 2 r \rho (\omega \cdot \theta)
    \coloneqq c + id,
    \]
    and
    \[
    -R^2(\xi) + 4 R (\xi) = (d^2 - c^2 + 4c) + i 2d(2-c) \coloneqq C + iD,
    \]
    where $c, d, C, D \in \mathbb{R}$.
    Suppose $C + iD = (a + ib)^2$ for some real numbers $a, b$.
    Then a straightforward computation shows that
    \[
    b^2 = \frac{1}{2}(-C + \sqrt{C^2 + D^2}).
    \]
    Note that
    \begin{align*}
        C^2 + D^2 &= (d^2 - c^2 + 4c)^2 + 4d^2(2-c)^2 \\
        & = ((d^2 +4) - (c-2)^2)^2 + 4(d^2+4)(c-2)^2 -16(c-2)^2 \\
        & \leq ((d^2 +4) + (c-2)^2)^2.
    \end{align*}
    This implies that
    \[
    b^2 \leq \frac{1}{2}(-(d^2 - c^2 + 4c) +(d^2 +4) + (c-2)^2) = (c-2)^2.
    \]
    Then we have
    \[
    (c + 2\rho^2)^2 - b^2 \geq (r^2 + \rho^2)^2 - (r^2 - \rho^2 -2)^2  = 4(\rho^2 + 1)(r^2 -1) \geq 0.
    \]
    Note $c +2 \rho^2 = r^2 + \rho^2 \geq 0$.
    Thus, the imaginary part of the roots satisfies
    \[
    \mathrm{Im}(\sigma_\pm(0, r\theta)) = \frac{1}{2}(c + 2\rho^2 \pm b) \geq 0.
    \]
    This proves for any ball $B$ with real center and radius $A_1$, we have
    \[
    \sup_{\zeta \in B} \mathrm{Im} (\sigma(\zeta)) \geq A_2 = 0,
    \]
    if $\sigma(\zeta)$ is a real-valued and analytic root.

\end{proof}
\subsection{The backward problem}\label{subsec_backward_new}
Next, we consider the backward problem
\begin{equation}\label{eq_linearback}
    \begin{aligned}
        \opqstar w  &= 0, & \  & \mbox{on } \M,\\
        w(t,x) &= f, & \ &\mbox{for  } x\in \partial \Omega,\\
        w = 0, \  \partial_t w &= 0, & \ &\mbox{for  } t=T.\\
    \end{aligned}
\end{equation}
As in Section \ref{subsec_backward_goc} , for a large parameter $\rho$ and fixed $\varpi \in \mathbb{S}^{n-1}$,
we construct its approximate solution
\begin{align*}
    w(t,x) = e^{i\psi(t, x,\rho, \varpi)} b(t,x,\rho, \varpi).
\end{align*}
For the reminder term,
we follow the same idea of Proposition \ref{pp_reminder_new}.
More explicitly, here we consider
\begin{align*}
    \widetilde{N} = (-1, 0, \ldots, 0) \in \mathbb{R} \times \Rn,
    \qquad \widetilde{H} = \{(t,x): (t,x) \cdot N \geq -T + \vep\},
    \qquad \widetilde{\zetaz} = (i\rho^2, i \rho \varpi).
\end{align*}
Recall we write $\Pad = \partial^2_t - \Delta + \partial_t\Delta$
and $\Pad_o = \Pad(D + \widetilde{\zetaz})$.
We would like to
verify the condition (\refb) for $\Pad_o$.
We have following lemma, as an analog to Lemma \ref{lm_A1A2}.
\begin{lm}\label{lm_A1A2_ad}
    Let $\Pad$, $\widetilde{\zetaz}$, $\widetilde{N}$ be defined as above.
    Let ${\sigma}(\zeta)$ be the solution to $\Pad_o(\zeta + \sigma \widetilde{N})$, where $\zeta = (\tau, \xi) \in \mathbb{C} \times \mathbb{C}^n$.
    If ${\sigma}(\zeta)$ is analytic and single-valued in a ball $B \subset \mathbb{C} \times \mathbb{C}^n$ with real center and radius $A_1 = 1$,
    then we have
    \[
    \sup_{\zeta \in B} \mathrm{Im} (\sigma(\zeta)) \geq 0.
    \]
\end{lm}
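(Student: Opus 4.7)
The plan is to run the proof of Lemma \ref{lm_A1A2} almost verbatim, leveraging the fact that the symbol of $\Pad$ differs from that of $P$ only in the sign of the $i\tau\,\xi\cdot\xi$ term --- equivalently, $\Pad(\tau,\xi)=P(-\tau,\xi)$. Combined with $\widetilde N = -N$ and the substitution $-i\rho^2 \mapsto i\rho^2$ in the first component of $\widetilde{\zetaz}$, this symmetry should cause all the sign changes to conspire so that the final estimate on $\mathrm{Im}(\sigma)$ comes out identical to the one already proved.

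First I would unwind $\Pad_o(\zeta + \sigma\widetilde N) = \Pad(\tau + i\rho^2 - \sigma,\, \xi + i\rho\varpi) = 0$, set $R = (\xi + i\rho\varpi)\cdot(\xi + i\rho\varpi)$ and $\tau' = \tau + i\rho^2 - \sigma$, and solve the resulting quadratic $\tau'^2 + i\tau' R - R = 0$ by the quadratic formula. Back-substituting yields a closed form of the type $\sigma_\pm = \tau + i\rho^2 + \tfrac{1}{2}\bigl(iR \mp \sqrt{-R^2+4R}\bigr)$, which is the same expression as in Lemma \ref{lm_A1A2} up to a harmless swap of the $\pm$ convention under the square root (immaterial once the supremum over both roots is taken).

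Next I would specialize to a real point $(\tau, r\theta) \in B$ with $r \geq 1$ and $\theta \in S^{n-1}$, exactly as before, and extract the imaginary part. Writing $R = c + id$ and $\sqrt{-R^2+4R} = a + ib$ with $c = r^2 - \rho^2$, $d = 2r\rho(\varpi \cdot \theta)$, and $a,b$ real, a short computation gives $\mathrm{Im}(\sigma_\pm) = \tfrac{1}{2}(c + 2\rho^2 \mp b)$. The elementary inequality $b^2 \leq (c-2)^2$ established in Lemma \ref{lm_A1A2} depends only on $c$ and $d$, not on the sign convention in $\Pad$, so it transfers unchanged. Since $c + 2\rho^2 = r^2 + \rho^2 \geq 0$, this yields $\mathrm{Im}(\sigma_\pm) \geq 0$ and hence $\sup_B \mathrm{Im}(\sigma) \geq 0$, as required with $A_1 = 1$ and $A_2 = 0$.

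There is no serious obstacle here --- the entire argument is a careful transcription of the previous one, and the only thing to watch is the sign bookkeeping from $\widetilde N = -N$ and from the flipped sign of the strong-damping term in $\Pad$. If one prefers to avoid repetition altogether, a one-line argument suffices: the substitution $\tau'\mapsto -\tau'$ converts the quadratic defining the adjoint roots into the one from Lemma \ref{lm_A1A2}, and unwinding the definitions produces the clean identity $\sigma^{\mathrm{adj}} = \sigma^{\mathrm{orig}} + 2\tau$, where $\sigma^{\mathrm{orig}}$ is the corresponding root from Lemma \ref{lm_A1A2} with $\omega$ replaced by $\varpi$; for real $\tau$ the imaginary parts agree, and the conclusion follows from Lemma \ref{lm_A1A2} directly.
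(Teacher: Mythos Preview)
Your proposal is correct and follows essentially the same approach as the paper: both write out the roots $\sigma_\pm$ explicitly, observe that up to a harmless $\pm$ swap they coincide with the expression in Lemma~\ref{lm_A1A2} shifted by the real quantity $2\tau$, and then invoke the inequality $b^2\le (c-2)^2$ already established there. Your closing one-line reduction via $\sigma^{\mathrm{adj}} = \sigma^{\mathrm{orig}} + 2\tau$ is a clean way to package this and is slightly more explicit than the paper's ``the proof of Lemma~\ref{lm_A1A2} shows\ldots''.
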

\begin{proof}
    In this case, we solve ${\tau}(\xi)$ from $\Pad(\tau, \xi) = 0$ for $\xi \in \mathbb{C}^n$.
    Indeed, the equation $-{\tau}^2 - i {\tau} \xi \cdot \xi + \xi \cdot \xi = 0$ have two complex roots
    \begin{align*}
        \tau_\pm(\xi)
        = \frac{1}{2}(- iR(\xi)  \pm \sqrt{-R^2(\xi) + 4 R (\xi)}),
        \qquad \text{where }R(\xi) = \xi \cdot \xi.
    \end{align*}
    Then the roots for $\Pad_o(\zeta + \sigma N)  = \Pad(\zeta + \widetilde{\zetaz} + \sigma \widetilde{N}) = 0$ are given by
    \begin{align*}
        \sigma_\pm(\tau, \xi)
        &= \tau + i\rho^2-
        {\tau}_\pm(\xi + i\rho \varpi)\\
        &= \frac{1}{2}(iR(\xi + i \rho \varpi)
        \mp \sqrt{-R^2(\xi + i \rho \varpi) + 4 R(\xi + i \rho \varpi)})  + i\rho^2 + \tau.
    \end{align*}
    For a ball $B$ with real center and radius $1$, we can choose a point $(\tau, \xi)\in \mathbb{R} \times \Rn$ in $B$ such that $(\tau, \xi) = (\tau, r \theta)$, where $r \geq 1$ and $\theta \in \mathbb{S}^{n-1}$.
    The proof of Lemma \ref{lm_A1A2} shows that the imaginary part of the roots satisfies
    \[
    \mathrm{Im}({\sigma}_\pm(0, r\theta)) \geq 0.
    \]
    This proves for any ball $B$ with real center and radius $A_1$, we have
    \[
    \sup_{\zeta \in B} \mathrm{Im} ({\sigma}(\zeta)) \geq A_2 = 0,
    \]
    if ${\sigma}(\zeta)$ is a real-valued and analytic root.
\end{proof}

\begin{crl}\label{cr_reminder_new}
    For $N \geq 0$, a large parameter $\rho$, and $\varpi \in \mathbb{S}^{n-1}$, let $w(t,x)$ be the approximate solution constructed in (\ref{eq_vN}),
    with the phase function $\psi = i\rho^2 t + i\rho(x \cdot \varpi)$ and the smooth amplitude $b =  \sum_{j=0}^N\rho^{-j}{b_j(t,x, \varpi)}$ constructed in Section \ref{subsec_backward_goc}.
    Then there exists a solution $d_N$ to the equation
    \begin{align*}
        (\Pad+q)(e^{i \varphi} d_N) = -(\Pad+q) w\qquad \text{in }(0, T) \times \Omega
    \end{align*}
    {with $d_N(T,x) = \partial_t d_N(T, x) = 0$},
    such that for $ s \geq 0$, we have
    \[
    \|d_N\|_{H^s((0,T)\times \Omega)} \leq {C_N}{\rho^{-N-1}},
    \]
    where $C_N$ depends on $q, n, \Omega$ and the choice of $\phi, \chi$.
\end{crl}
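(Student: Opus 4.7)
The plan is to mirror the proof of Proposition \ref{pp_reminder_new} verbatim, swapping the forward operator $P$ and the half-space $H=\{t\ge \vep\}$ for the adjoint operator $\Pad$ and the reversed half-space $\widetilde H=\{(t,x):-t\ge -T+\vep\}=\{t\le T-\vep\}$. The key observation is that Lemma \ref{lm_A1A2_ad} has already been proved and shows that $\Pad_o(D)=\Pad(D+\widetilde{\zetaz})$ satisfies condition (\refb) of Proposition \ref{pp_1281} with $A_1=1$, $A_2=0$ independent of $\rho$, which is exactly what is needed to invoke Proposition \ref{pp_Eestimate_new} on the half-space $\widetilde H$. Support of the fundamental solution in $\widetilde H$ will translate into the vanishing terminal conditions $d_N(T,x)=\partial_t d_N(T,x)=0$, analogous to how support in $H$ produced vanishing initial conditions in Proposition \ref{pp_reminder_new}.

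First, I would conjugate the equation by $e^{i\psi}$: using the identity $\Pad^{(\alpha)}(D)e^{i\psi}=\Pad^{(\alpha)}(\widetilde{\zetaz})e^{i\psi}$, the equation $(\Pad+q)(e^{i\psi}d_N)=-(\Pad+q)w$ reduces to
\[
\Pad_o(D)d_N=-(\rho^{-N+2}\widetilde F_N+qd_N),
\]
where $\widetilde F_N$ collects the error terms from plugging $w$ into $\Pad+q$ (the analog of (\ref{eq_errorterm}) for the backward problem). Since the amplitude $b_0$ is supported in $\phi(t)$ with $\phi\in C_0^\infty((0,T))$, there exists $\vep>0$ with $\supp\widetilde F_N\subset\{t\le T-\vep\}\subset\widetilde H$.

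Next I would establish the lower bound on $\widetilde{\Pad_o}$. A direct computation gives
\[
\widetilde{\Pad_o}^{\,2}(\zeta)=\widetilde{\Pad}^{\,2}(\zeta+\widetilde{\zetaz})\ge |2(\xi+i\rho\varpi)(1-i(\tau+i\rho^2))|^2=4(|\xi|^2+\rho^2)((1+\rho^2)^2+\tau^2)\ge 4\rho^6,
\]
which replaces the estimate $\widetilde{P_o}^{\,2}\ge 4\rho^6$ used in Proposition \ref{pp_reminder_new}. Taking $X=(0,T)\times\Omega$, a small neighborhood $\widetilde X$, and a small neighborhood $X_0$ of $\widetilde X-\widetilde X$, Proposition \ref{pp_Eestimate_new} (applied to $\Pad_o$ on $\widetilde H$) yields a bounded linear operator $E_{\widetilde H}$ producing solutions supported in $\widetilde H$ with
\[
\|E_{\widetilde H}f\|_{H^s(\widetilde X)}\le C\sup_{\zeta\in\mathbb{R}^{n+1}}\frac{1}{\widetilde{\Pad_o}(\zeta)}\|f\|_{H^s(\widetilde X)}\le C\rho^{-3}\|f\|_{H^s(\widetilde X)}.
\]

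Finally, I would run the fixed-point argument: define $\mathcal J(d)=-E_{\widetilde H}(\rho^{-N+2}\widetilde F_N+qd)$ on the ball $B_1=\{d\in H^s(\widetilde X):\supp d\subset\widetilde H,\ \|d\|_{H^s(\widetilde X)}\le 1\}$. For $\rho$ larger than a constant depending only on $\|q\|_{C^{\lceil s\rceil}}$ and $\|\widetilde F_N\|_{H^s}$, the estimate above forces $\mathcal J$ to be a contraction that maps $B_1$ into itself, and Banach's theorem produces a fixed point $d_N$ supported in $\widetilde H$ solving the equation and satisfying $\|d_N\|_{H^s(\widetilde X)}\le C_N\rho^{-N-1}$. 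Because $\supp d_N\subset\widetilde H=\{t\le T-\vep\}$, the terminal conditions $d_N(T,x)=\partial_t d_N(T,x)=0$ hold automatically. The only real obstacle is purely bookkeeping — verifying that the various constants produced by Proposition \ref{pp_Eestimate_new} and the lower bound on $\widetilde{\Pad_o}$ remain independent of $\rho$ so that the contraction argument closes — and this is handled exactly as in the forward case, since Lemma \ref{lm_A1A2_ad} furnishes $A_1,A_2$ independent of $\rho$.
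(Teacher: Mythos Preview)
Your proposal is correct and follows essentially the same approach as the paper: the paper sets up $\widetilde N=(-1,0,\ldots,0)$, $\widetilde H=\{t\le T-\vep\}$, $\widetilde{\zetaz}=(i\rho^2,i\rho\varpi)$, invokes Lemma \ref{lm_A1A2_ad} to verify condition (\refb) for $\Pad_o$ with $A_1=A_2+1$ independent of $\rho$, and then states the corollary without further proof, leaving the reader to mirror Proposition \ref{pp_reminder_new} exactly as you have done. Your lower bound $\widetilde{\Pad_o}^{\,2}\ge 4\rho^6$ is the correct analog of the forward estimate (and in fact slightly sharper than the $\ge 2\rho^6$ bound the paper obtains in the earlier Corollary \ref{cr_remainder}).
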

\subsection{Recovering $q$}
In this part, we consider the recovery of $q$ from the DN map for the linear problem, which is related to the first-order linearization of $\Lambda_{q, \beta}$.
More explicitly, consider the boundary value problem
\begin{equation}\label{eq_linear_f_new}
    \begin{aligned}
        \opq u &= 0, & \  & \mbox{on } \M,\\
        u(t,x) &= h, & \ &\mbox{for  } x\in \partial \Omega,\\
        u = 0, \  \partial_t u &= 0, & \ &\mbox{for  } t=0.\\
    \end{aligned}
\end{equation}
Let  $h \in C^{2m+2}([0,T] \times \partial \Omega)$.
Then this linear problem has a unique solution $u$ satisfying  (\ref{est_nl}),
if the data $(0, 0, h)$ satisfies the $m$th-order compatibility condition (\ref{def_comp_nl}) with $\beta \equiv 0$,
by Proposition \ref{pp_nl}.
This is equivalent to require $\dt^j h(0) = 0$ for $j = 0, \ldots, m$.
Thus, for $h \in \Hm$, we define the linear DN map
\[
\Lamlin_{q}: h \rightarrow \partial_\nu u|_{\partial \Omega}.
\]


First, we prove the following proposition, which shows the DN map for the linear problem determines the potential $q$.
\begin{pp}\label{pp_recoverq_new}
    Let $q^{(j)} \in {C^\infty([0,T]\times \bar{\Omega})}$ be two potentials, $j = 1,2$.
    For some $\rho, \rho_h>0$, suppose
    \[
    \Lamlin_\qone(h) = \Lamlin_\qtwo(h)
    \]
    for $h \in \Hm$.
    Then we have $\qone = \qtwo$ in $[0,T]\times \bar{\Omega}$.
\end{pp}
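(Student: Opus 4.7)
The plan is to mirror the proof of Proposition \ref{pp_recoverq} but replace the geometric optics solutions from Section \ref{sec_goc} and Proposition \ref{pp_remainder} with the solutions from Section \ref{subsec_reminder_new} whose remainder terms have vanishing initial data. This change is essential because now we only have access to lateral boundary measurements and must work with Dirichlet data $h \in \mathcal{H}^m$, i.e.\ data whose first $m$ time derivatives vanish at $t = 0$.

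More explicitly, for large $\rho$ and fixed $\omega \in S^{n-1}$, I would build $u^{(1)} = v_N + e^{i\varphi} r_N$ solving $(P + q^{(1)})u^{(1)} = 0$ using Proposition \ref{pp_reminder_new}, with phase $\varphi = -i\rho^2 t + i\rho (x \cdot \omega)$ and leading amplitude $a_0(t,x,\omega) = \phi(t) \prod_{j=2}^n \chi(\omega_j \cdot (x-y_0))$, where $\phi \in C_0^\infty((0,T))$ and $y_0 \in \partial\Omega$. Because $\phi$ is supported away from $t = 0$, every $a_j$ constructed from the transport equations (\ref{transport_1})--(\ref{transport_j}) also vanishes near $t = 0$, so $v_N$ vanishes there; combining this with the vanishing initial condition of $r_N$ from Proposition \ref{pp_reminder_new} gives $u^{(1)}(0,\cdot) = \partial_t u^{(1)}(0,\cdot) = 0$. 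Hence $h \coloneqq u^{(1)}|_{[0,T] \times \partial \Omega}$ lies in $\mathcal{H}^m$ for $s$ chosen large enough. Let $u^{(2)}$ be the solution of the boundary value problem (\ref{eq_linear_f_new}) with potential $q^{(2)}$ and the same boundary trace $h$; then $u \coloneqq u^{(2)} - u^{(1)}$ satisfies $(P + q^{(2)}) u = q\, u^{(1)}$ with $q = q^{(2)} - q^{(1)}$, $u|_{\partial \Omega} = 0$, $u(0,\cdot) = \partial_t u(0,\cdot) = 0$, and the DN hypothesis forces $\partial_\nu u|_{[0,T]\times\partial\Omega} = 0$.

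For the test function I would use the backward CGO $w = e^{i\psi}(b_0 + d_0)$ from Corollary \ref{cr_reminder_new} with $\psi = i\rho^2 t + i\rho(x \cdot \varpi)$, $\varpi = -\omega$, leading amplitude $b_0(t,x) = \phi(t)\prod_{j=2}^n \chi(\omega_j \cdot (x - y_0))$ supported in a small tube around the same ray $\gamma(s) = s\omega + y_0$, and satisfying $w(T,\cdot) = \partial_t w(T,\cdot) = 0$. Integration by parts in $(0,T) \times \Omega$ then yields
\[
0 = \int_0^T\!\!\int_\Omega (P + q^{(2)}) u \cdot w \diff x \diff t - \int_0^T\!\!\int_\Omega q\, u^{(1)} w \diff x \diff t,
\]
where the boundary contributions at $t = 0$, $t = T$, and $[0,T] \times \partial \Omega$ all vanish thanks to the four vanishing conditions on $u$ and $w$ together with $\partial_\nu u|_{\partial \Omega} = 0$. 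Expanding $u^{(1)} w = e^{i(\varphi + \psi)}(a_0 + r_0)(b_0 + d_0) = (a_0 + r_0)(b_0 + d_0)$ (since $\varphi + \psi = 0$ for $\varpi = -\omega$) and using the $H^s$ estimates $\|r_0\|, \|d_0\| \lesssim \rho^{-1}$, the limit $\rho \to \infty$ leaves $\int_0^T \int_\Omega q(t,x)\, a_0(t,x) b_0(t,x) \diff x \diff t = 0$. Since $\phi$ is arbitrary, shrinking $\ep$ gives the vanishing of the spatial X-ray transform $\int q(t, s\omega + y_0) \diff s = 0$ for every $t$, $\omega \in S^{n-1}$, and $y_0 \in \partial \Omega$; injectivity of the X-ray transform on $L^1(\Rn)$ (extending $q$ by zero outside $\bar\Omega$, using convexity of $\Omega$ from the hypothesis of Theorem \ref{thm2}) yields $q^{(1)} = q^{(2)}$.

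The main obstacle I anticipate is twofold. First, one has to confirm that the full CGO $u^{(1)}$ is actually admissible as a Dirichlet datum, i.e.\ that $h \in \mathcal{H}^m$, which requires not only that the leading profile $a_0$ be supported in $t > 0$ but also that each higher-order amplitude $a_j$ and the remainder $r_N$ inherit this property; the transport equations preserve the support in $t$ because $T_0 = 2\omega\cdot\nabla$ is purely spatial, and Proposition \ref{pp_reminder_new} supplies the vanishing initial data for $r_N$, so this is clean but must be stated carefully. Second, the integration-by-parts identity must be justified with no contribution from $t = 0$, $t = T$, or the lateral boundary; this is the conceptual payoff of using the fundamental solution with support in the half-space $\{t \geq \ep\}$ from Section \ref{subsec_cauchy}, and is the reason one needed to verify Lemma \ref{lm_A1A2} and \ref{lm_A1A2_ad} for $P_o$ and $\Pad_o$ uniformly in $\rho$.
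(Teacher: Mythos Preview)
Your proposal is correct and follows essentially the same approach as the paper's proof: you construct $u^{(1)}$ via Proposition \ref{pp_reminder_new} so that its trace lies in $\mathcal{H}^m$, pair it with the backward CGO $w$ from Corollary \ref{cr_reminder_new} having vanishing final data, integrate by parts using the four vanishing conditions, and pass to $\rho\to\infty$ and then $\ep\to 0$ to reduce to injectivity of the X-ray transform. The two obstacles you flag (admissibility of $h$ via the $t$-support of the amplitudes and $r_N$, and the absence of boundary terms at $t=0,T$ and on $\partial\Omega$) are exactly the points the paper addresses, and your treatment of them is accurate.
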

\begin{proof}
    We follow exactly the same idea as before.
    For large $\rho$ and fixed $\omega \in S^{n-1}$,
    let $\uone = e^{i\varphi}(a_0 + r_0)$ be constructed as in Proposition \ref{pp_reminder_new},
    which solves
    \[
    (P + \qone) \uone = 0, \qquad u^{(1)}(0) = \dt u^{(1)}(0) = 0,
    \]
    with the phase function $\varphi = -i\rho^2 t + i\rho(x \cdot \omega)$.
    Recall in (\ref{eq_a0}), we choose
    \[
    a_0(t, x,\omega) = \phi(t) \prod_{j = 2}^{n} \chi(\omega_j \cdot (x - y_0)) \coloneqq \phi(t)\tilde{a}_0(x, \omega).
    \]
    where $\phi \in  C_0^\infty((0, T))$ and $\chi \in C_0^\infty(\mathbb{R})$ is supported in $(-\ep, \ep)$ with $\chi = 1$ near $0$.
    Note that $\tilde{a}_0(x, \omega)$ is
    a smooth function supported in a small $\ep$-neighborhood of the ray $\gamma(s) = s \omega + y_0$, for a fixed point $y_0 \in \partial \Omega$.
    Recall the reminder term satisfies $\|r_0\|_{H^s((0,T) \times \Omega)} \leq C\rho^{-1}$, where we choose $s$ large enough. 
    This implies $\uone \in H^{s}((0,T) \times \Omega)$.
    Then we set
    \[
    h = \uone|_{\partial \Omega},
    \]
    which belongs to {$C^{2m+{2}}([0,T] \times \partial \Omega)$}, for large enough $s$.
    In particular, we have $\dt^j h(0) = 0$ for $j = 0, \ldots, m$, as in Proposition \ref{pp_reminder_new} we construct $a_0, r_0$ supported in $(\vep, T)$ for some $\vep > 0$.
    Thus, there exists a unique
    $\utwo$ solving the linear problem (\ref{eq_linear}) with the potential $\qtwo$, the Dirichlet data $h$, and vanishing initial conditions.
    We set $u = \utwo - \uone$ and $q = \qtwo - \qone$, which satisfies
    \begin{equation*}
        \begin{aligned}
            (P+\qtwo) u &= q\uone, & \  & \mbox{on } \M,\\
            u(t,x) &= 0, & \ &\mbox{for  } x\in \partial \Omega,\\
            u = 0, \  \partial_t u &= 0, & \ &\mbox{for  } t=0.\\
        \end{aligned}
    \end{equation*}
    Moreover, with the assumption that
    $\Lamlin_\qone(h) = \Lamlin_\qtwo(h)$, we have
    \[
    \partial_\nu u|_\Omega = 0.
    \]
    Now let $w = e^{i\psi}(b_0 + d_0)$ be as in Corollary \ref{cr_reminder_new},
    which solves the backward problem (\ref{eq_linearback}),
    with the phase function $\psi = i\rho^2 t + i\rho(x \cdot \varpi)$.
    Here we set $\varpi = -\omega$.
    We choose
    \[
    b_0(t, x,\varphi) = \phi(t)\prod_{j = 2}^{n} \chi(\omega_j \cdot (x - y))
    \coloneqq \phi(t)\tilde{b}_0(x, \omega).
    \]
    as a smooth function supported in a small $\ep$-neighborhood of the same ray $\gamma(s)$, with the same $\phi$.
    In this case, note that $w(T,x) = \dt w(T,x) = 0$ and the reminder term satisfies
    $\|d_0\|_{H^s((0,T) \times \Omega)} \leq C \rho^{-1}$.
    We multiply $w$ with the equation above and integrate over $t,x$ to have
    \begin{align*}
        0 = \int_0^T \int_\Omega (P+\qtwo) uw - q\uone w \diff x \diff t
        = \int_0^T \int_\Omega u (\Pad+\qtwo) w - q\uone w \diff x \diff t,
    \end{align*}
    which implies
    \[
    \int_0^T \int_\Omega - q\uone w \diff x \diff t = 0.
    \]
    The following proof is just the same as that of Proposition \ref{pp_recoverq}. For completeness, we repeat it here.
    We plug in the expansion of $\uone, w$ to have
    \begin{align*}
        \mI_q = \int_0^T \int_\Omega q a_0 b_0 \diff x \diff t
        =  -\int_0^T \int_\Omega q ( r_0 b_0 + a_0 d_0)  \diff x \diff t
        \leq C \rho^{-1}.
    \end{align*}
    When $\rho \rightarrow \infty$, this implies $\mathcal{I}_q = 0$.
    Since $\phi \in C_0^\infty((0,T))$ is arbitrary, we must have
    \[
    \int_\Omega  q(t,x)
    \tilde{a}_0 \tilde{b}_0\diff x = 0, \qquad t \in [0,T].
    \]
    Note that $\tilde{a}_0$ and $\tilde{b}_0$ are both supported in a small $\ep$-neighborhood of the ray $\gamma(s) = s \omega + y$.
    Let $\ep \rightarrow 0$ and we can extract the line integral
    \[
    X q(\gamma) = \int q(t, \gamma(s)) \diff s = 0, \qquad t \in [0,T].
    \]
    Since $\omega \in S^{n-1}$ and $y_0\in \partial \Omega$ are arbitrary,
    the X-ray transform of $q$ over all rays vanishes.
    With the assumption on $\Omega$, we have $q = \qtwo - \qone = 0$.
    Indeed, the X-ray transform is injective on $L^1(\Rn)$ by the Fourier Slice Theorem, for example, see \cite[Chapter 2]{book_SU}.
    One can extend $q \in C^\infty([0,T] \times \bar{\Omega})$ to $L^1(\Rn)$ by setting it equal to zero in $\Rn \setminus \bar{\Omega}$, as $\Omega$ is bounded.
\end{proof}
In this case, compatibility condition of the Dirichlet data $h$ for the linear problem coincides with the compatibility condition for the nonlinear one.
Indeed, both of them requires $\dt^j h(0) = 0, j = 0, \ldots, m$.
In this case, we can say
\begin{align}\label{eq_Lam}
    \Lamlin_{q}(g_0, g_1, h) = 
    \partial_\vep \Lambda_{q, \beta}(\vep g_0, \vep g_1, \vep h)|_{\vep=0},
\end{align}
for small $h \in \Hm$.
Therefore, Proposition \ref{pp_recoverq} implies the first-order linearization of $L_{q,\beta}$ determines $q$.
\subsection{The second-order Linearization}\label{subsec_2nd_new}
In this subsection, let $\vep_1, \vep_2 > 0$ be small parameters and let
$u_{\vep_1,\vep_2}$ be the solution to the nonlinear problem (\ref{eq_nl}) with data
\begin{align}\label{def_2data_new}
    g_0 = g_1 = 0, \qquad    h = \vep_1 h_1  + \vep_2  h_2.
\end{align}
We consider the second-order linearization
\[
U_2 = \partial_{\vep_1} \partial_{\vep_2} u_{\vep_1, \vep_2}|_{\vep_1 = \vep_2 = 0}.
\]
Now we have
\[
\partial_\nu U_2|_{\partial \Omega} = \partial_{\vep_1} \partial_{\vep_2}\Lambda_{q, \beta}(h)|_{\vep_1 = \vep_2 = 0}.
\]
Let $u_j$ be solutions to the linear problem
\begin{align}\label{LS_new}
    \begin{aligned}
        (P+q) u_j(t,x)  &= 0, & \  & \mbox{on } \M,\\
        u_j(t,x) &= h_j, & \ &\mbox{for  } x\in \partial \Omega,\\
        u_j = 0,\  \dt v_j &= 0, & \ &\mbox{for  } t=0.\\
    \end{aligned}
\end{align}
Then $U_2$ solves
\begin{align*}
    \begin{aligned}
        (P+q) U_2  &= \beta(t,x) \dt^2 (u_1u_2), & \  & \mbox{on } \M,\\
        U_2(t,x) &= 0, & \ &\mbox{for  } x\in \partial \Omega,\\
        U_2 = \partial_t U_2 &= 0, & \ &\mbox{for  } t=0.\\
    \end{aligned}
\end{align*}
Now for any $u_0$ solving (\ref{eq_linearback}) with the Dirichlet data $h_0$,
we integrate by parts to have
\begin{align}\label{eq_Ibeta_new}
    &\int_0^T \int_\Omega \beta(t,x) \partial_t^2(u_1 u_2) u_0 \diff x \diff t\\
    = & \int_0^T \int_\Omega (P+q) U_2 u_0 \diff x \diff t
    =  - \int_0^T \partial_\nu U_2|_{\partial \Omega} h_0
    + \dt\partial_\nu  U_2 |_{\partial \Omega} h_0 \diff t \nonumber \\
    = &- \int_0^T \partial_{\vep_1} \partial_{\vep_2}\Lambda_{q, \beta}(f)|_{\vep_1 = \vep_2 = 0} h_0
    + \dt\partial_{\vep_1} \partial_{\vep_2}\Lambda_{q, \beta}(f)|_{\vep_1 = \vep_2 = 0} h_0 \diff t
    . \nonumber
\end{align}
\subsection{Recovering $\beta$.}
Now we would like to recover the nonlinear coefficient from the DN map.
Suppose we have two  coefficients $\betaone, \betatwo \in C^\infty([0,T] \times \bar{\Omega})$.
Let $\uk_{\vep_1,\vep_2}$ be the solutions to the nonlinear problem (\ref{eq_nl})  with $\betak$, the vanishing initial conditions, and the Dirichlet data given in (\ref{def_2data_new}), for $k=1,2$.
We consider the corresponding DN maps $\Lambda_\qbk$ and suppose
\[
\Lambda_\qbone (h) = \Lambda_\qbtwo (h),
\]
for any small and compatible Dirichlet data $h$.

First, according to (\ref{eq_Lam}) and Proposition \ref{pp_recoverq_new} we have $\qone = \qtwo$.
Next, we would like to construct approximate solutions $u_j$ to linear problems using Section \ref{sec_goc} and \ref{subsec_reminder_new},
which are corresponding to the Dirichlet data $h_j$, for $j = 1,2,0$.
As before, by (\ref{eq_Ibeta_new}), we have
\begin{align}\label{eq_Ibeta_zero}
    \int_0^T \int_\Omega (\betaone- \betatwo)  \ddt(u_1 u_2) u_0 \diff x \diff t = 0.
\end{align}
Then we follow exactly the same idea as in the proof of Section \ref{subsec_beta}. But for completeness, we repeat it here.

As before, let $\omega_1, \omega_2, \ldots, \omega_{n}$ be an orthonormal basis for $\mathbb{R}^n$.
Let $\varpi =  - (\omega_1 + \omega_2)/\sqrt{2} \in S^{n-1}$ and we choose $\varpi_2, \ldots \varpi_n$ such that they form an orthonormal basis with $\varpi$.
For a fixed point $x_0 \in \Omega$,
we choose $y_1, y_2, y_0\in \partial \Omega$ such that the three rays
\[
\gamma_j(s) = s \omega_j + y_j, 
\ \text{ for } j = 1,2,
\qquad \text{and }
\gamma_0(s) = s \varpi + y_0,
\]
intersect at $x_0$.
We construct approximate solutions
\begin{align*}
    u_{j}  &=  e^{\rho^2t - \rho (w_j \cdot x)} (a_{0,j} + r_{0,j}) ,\quad j = 1,2,
\end{align*}
to $(P + \qone) u_j =(P + \qtwo) u_j  = 0$,
according to Proposition \ref{pp_reminder_new}.
Here the amplitudes given by
\begin{align*}
    a_{0,1}(t,x,\omega_j) =  \phi(t) \prod_{i \neq 1} \chi(\omega_i \cdot (x - y_1)) \coloneqq \phi(t)\tilde{a}_{0,1}(x, \omega),\\
    a_{0,2}(t,x,\omega_j) =  \phi(t) \prod_{i \neq 2} \chi(\omega_i \cdot (x - y_2)) \coloneqq \phi(t)\tilde{a}_{0,2}(x, \omega),
\end{align*}
are smooth function supported in a small $\ep$-neighborhood of the ray $\gamma_1(s)$ and $\gamma_2(s)$ respectively.
Moreover, the reminder term is given by  $\|r_{0,j}\|_{H^s((0,T) \times \Omega)} \leq C\rho^{-1}$, for $j = 1,2$, where $s$ is chosen to be large enough.
Then we set
\[
h_j = u_j|_{\partial \Omega}, \quad j =1,2,
\]
which satisfy the compatibility condition (\ref{def_comp_nl}), by our construction in Proposition \ref{pp_reminder_new}.
Then with small $\vep_1, \vep_2$, the vanishing initial conditions and the Dirichlet data $h$ given by (\ref{def_2data_new}) allows a unique solution to the nonlinear problem.
Thus, let $\uk_{\vep_1,\vep 2}$ be the solutions to the nonlinear problem (\ref{eq_nl})  with $\betak$ and (\ref{def_2data_new}).
Note the second-order linearization $U_2^{(k)}$ satisfies
\[
\partial_\nu U_2^{(1)}|_{\partial \Omega} = \partial_\nu U_2^{(2)}|_{\partial \Omega},
\]
since $\Lambda_\qbone( h) = \Lambda_\qbtwo( h)$.

Then we construction approximate solutions $u_0$ to the backward problem (\ref{eq_linearback}) by
\begin{align*}
    u_{0}  &=  e^{-2\rho^2t - \sqrt{2}\rho (\varpi \cdot x)} (b_{0} +  d_{0}),
\end{align*}
according to Corollary \ref{cr_reminder_new}.
Here the amplitudes given by
\begin{align*}
    b_{0}(t,x,\varpi) =  \phi(t) \prod_{i \neq 1} \chi(\varpi_i \cdot (x - y_0)) \coloneqq \phi(t)\tilde{b}_{0}(x, \varpi),
\end{align*}
is a smooth function supported in a small $\ep$-neighborhood of the ray $\gamma_0(s)$.
In this case, the reminder term is given by  $\|d_{0}\|_{H^s((0,T) \times \Omega)} \leq C\rho^{-1}$.

Now we plug in $u_1,u_2,u_0, U_2^{(1)}, U_2^{(2)}$ in (\ref{eq_Ibeta_new}) to have
\begin{align*}
    \int_0^T \int_\Omega (\betaone- \betatwo)  \ddt(u_1 u_2) u_0 \diff x \diff t = 0.
\end{align*}
As before, we have
\begin{align*}
    \ddt(u_1 u_2) &
    = e^{2 \rho^2 t -\rho (\omega_1 + \omega_2) \cdot x} (4 \rho^4  \phi^2(t) \tilde{a}_{0,1}\tilde{a}_{0,2} + R_1),
\end{align*}
where we write
\begin{align*}
    R_1 &= 4 \rho^4 (r_{0,1}(a_{0,2} +  r_{0,2}) + (a_{0,1} + r_{0,1}) r_{0,2})\\
    &-2 \rho^2 \dt((a_{0,1} + r_{0,1})(a_{0,2}  + r_{0,2})) + \ddt ((a_{0,1} +  r_{0,1})(a_{0,2} + r_{0,2})).
\end{align*}
A straightforward computation shows that $\|R_1\|_{H^{s-2}((0,T)\times \Omega)} \leq C \rho^3$.
Then we have
\begin{align*}
    0
    & = \int_0^T \int_\Omega (\betaone- \betatwo)    (4 \rho^4  \phi^3(t) \tilde{a}_{0,1}\tilde{a}_{0,2} \tilde{b}_{0}+ R_2) \diff x \diff t
\end{align*}
where we write
\[
R_2  = R_1 (b_{0} + d_{0})  + (4 \rho^4  \phi^2(t) \tilde{a}_{0,1}\tilde{a}_{0,2} + R_1) d_{0}.
\]
Similarly, we have $\|R_2\|_{H^{s-2}((0,T)\times \Omega)}  \leq C \rho^3$.
It follows that
\begin{align*}
    \int_0^T \int_\Omega (\betaone- \betatwo)   \phi^3(t) \tilde{a}_{0,1}\tilde{a}_{0,2} \tilde{b}_{0} \diff x \diff t = 0.
\end{align*}
Since $\phi \in C_0^\infty((0,T))$ is arbitrary, this implies
\[
\int_\Omega (\betaone (t,x)- \betatwo(t,x)) \tilde{a}_{0,1}\tilde{a}_{0,2} \tilde{b}_{0}
\diff x = 0, \qquad t \in [0,T].
\]
Recall the rays $\gamma_1, \gamma_2, \gamma_0$ intersect at the fixed point $x_0 \in \Omega$.
Then we can assume $\tilde{a}_{0,1}\tilde{a}_{0,2} \tilde{b}_{0}$ is supported in a small neighborhood $N_\ep$ of $x_0$, which is contained in a ball of radius $\ep$.
By shrinking the support of $\chi$,
we have
\[
\betaone(t,x_0)- \betatwo(t,x_0) = \lim_{\ep \rightarrow 0} \frac{1}{|N_\ep|}\int_\Omega (\betaone(t,x)- \betatwo(t,x)) \tilde{a}_{0,1}\tilde{a}_{0,2} \tilde{b}_{0}
\diff x
= 0.
\]
Since $x_0 \in \Omega$ is arbitrary, we must have $\betaone = \betatwo$ in $[0,T] \times \bar{\Omega}$.

\section{Appendix}\label{sec_apdx}
\subsection{Preliminaries}
Suppose $X$ is a Banach space and we denote by $H^k([0,T]; X)$  the set of all measurable functions $u: [0,T] \rightarrow X$ with
\[
\|u\|_{H^k([0,T]; X)} = (\sum_{j = 0}^k \int_0^T \| \dt^j u(t)\|^2_X \diff t)^{\frac{1}{2}} < +\infty.
\]
We have the following Sobolev embedding result, according to \cite[Section 5.9.2]{evans2022partial}.
\begin{lm}\label{lm_emb1}
    Let $\Omega \subset \Rn$ be a bounded open set with smooth boundary.
    For integers $l \geq 0$ and $k \geq \kn$,
    suppose $u \in H^{l+1}([0,T]; H^k ({\Omega}))$.
    Then we have
    \[
    u \in C^{l}([0,T]; H^k(\Omega)) \subset
    C^{l}([0,T]; C^{k-\kn}(\bar{\Omega}))
    \]
    with the estimates
    \[
    \sum_{j = 0}^{l} \sup_{t \in [0, T]}\| \dt^{j} u(t) \|_{C^{k-\kn}(\bar{\Omega})}
    \leq C\sum_{j = 0}^{l} \sup_{t \in [0, T]}\| \dt^{j} u(t) \|_{H^k}
    \leq C'\| u\|_{H^{l+1}([0,T]; H^k (\Omega))}.
    \]
\end{lm}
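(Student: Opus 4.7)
The plan is to obtain both inequalities by invoking two standard Sobolev embeddings---one in space, one in time---and stringing them together. This is a routine statement, so the main work is bookkeeping rather than any new idea.

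For the first (spatial) inequality I would argue pointwise in $t$. Since $\Omega$ has smooth boundary and $k \geq \kn = \lfloor n/2 \rfloor + 1$, the classical Morrey--Sobolev embedding on a smooth bounded domain gives
\[
H^k(\Omega) \hookrightarrow C^{k-\kn}(\bar{\Omega}),
\]
with a constant $C$ depending only on $\Omega$, $n$, $k$ (the required gap is $k > (k-\kn) + n/2$, which reduces to $\kn > n/2$ and holds by definition of $\kn$). Applying this embedding to $\partial_t^{j}u(t)$ for each fixed $t \in [0,T]$ and each $j = 0, \ldots, l$, and then taking the supremum in $t$ and summing over $j$, delivers the first $\leq$ in the statement.

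For the second (temporal) inequality I would invoke the Bochner--Sobolev embedding
\[
H^{1}([0,T]; X) \hookrightarrow C([0,T]; X)
\]
valid for any Banach space $X$, which is a consequence of the fundamental theorem of calculus for Bochner integrals (equivalently, apply the scalar embedding $H^{1}(0,T)\hookrightarrow C([0,T])$ to $t \mapsto \|w(t)\|_X$). Under the assumption $u \in H^{l+1}([0,T]; H^k(\Omega))$ we have $\partial_t^{j} u \in H^{1}([0,T]; H^k(\Omega))$ for $j = 0, \ldots, l$, so taking $X = H^{k}(\Omega)$ and $w = \partial_t^{j}u$ yields a continuous representative with
\[
\sup_{t\in [0,T]} \|\partial_t^{j}u(t)\|_{H^k} \leq C\,\|\partial_t^{j} u\|_{H^{1}([0,T];H^{k}(\Omega))} \leq C\,\|u\|_{H^{l+1}([0,T];H^{k}(\Omega))}.
\]
Summing over $j$ gives the second $\leq$; together the two embeddings furnish the membership $u \in C^{l}([0,T];H^{k}(\Omega)) \subset C^{l}([0,T];C^{k-\kn}(\bar\Omega))$.

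I do not expect any genuine obstacle: the spatial embedding is classical and the temporal embedding is precisely what is developed in the reference cited above the lemma, \cite[Section 5.9.2]{evans2022partial}. The only care point is that the pointwise-in-$t$ evaluation $\partial_t^{j}u(t)$ must be understood after passing to the continuous representative produced by the temporal embedding, after which the pointwise spatial estimate makes sense and the composition of the two bounds is immediate.
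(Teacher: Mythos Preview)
Your proposal is correct and follows essentially the same route as the paper: both arguments combine the temporal embedding $H^{1}([0,T];X)\hookrightarrow C([0,T];X)$ from \cite[Section 5.9.2]{evans2022partial} (applied to each $\partial_t^{j}u$) with the spatial Sobolev embedding $H^{k}(\Omega)\hookrightarrow C^{k-\kn}(\bar\Omega)$. The only cosmetic difference is that the paper treats the temporal step first and the spatial step second, whereas you present them in the opposite order and then note that the pointwise-in-$t$ spatial estimate is justified after passing to the continuous representative.
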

\begin{proof}
    By \cite[Section 5.9, Theorem 2]{evans2022partial}, we have
    \[
    \sup_{t \in [0, T]}\| \dt^{j} u(t) \|_{k} \leq \| u\|_{H^{l+1}([0,T]; H^k (\Omega))},
    \]
    for any $j = 0, 1, \ldots, l$.
    Then by the Sobolev embedding inequality, with  $\dt^{j} u(t) \in H^k(\Omega)$,
    we have
    \[
    \| \dt^{j} u(t) \|_{C^{k-\kn}(\bar{\Omega})} \leq C \| \dt^j u(t)\|^2_{H^k},
    \]
    which proves the desired result.
\end{proof}

\subsection{The linear problem with variable coefficients}\label{sec_linear}
Let $\Omega \subset \mathbb{R}^n$ be a bounded open set with smooth boundary, for $n \geq 2$.
In this section, we consider the following linear problem
\begin{align}\label{eq_linear_f}
    \begin{aligned}
        (1 - 2\beta v) \ddt u - \Laplace u - \dt \Laplace u -q(t,x)u &= f(t,x), & \  & \mbox{on } (0,T)\times \Omega,\\
        u(t,x) &= 0, & \ &\mbox{for  } x\in \partial \Omega,\\
        u = g_0, \quad & \partial_t u = g_1 , & \ &\mbox{for  } t=0,
    \end{aligned}\end{align}
where $\beta, q \in C^\infty([0,T]\times \bar{\Omega})$, $f$ is the source,  and $v$ is a function to be specified later.

For the purpose of the nonlinear problem, we consider the space $\zm(R,T)$, the set containing all functions $u$ such that
\[
u \in \bigcap_{k=0}^{m} C^{m-k}([0,T]; H^{2k+1}(\Omega)),
\quad
\|u\|^2_{\zm} = \sum_{k=0}^m \sup_{s \in [0, T]}   \|\partial_t^{m-k} v(s)\|^2_{{H^{2k+1}}} \leq R^2,
\]
where $R> 0$ is a parameter to be chosen later.

To deal with the initial value problem with a source, we consider
\begin{align}\label{def_gf}
g_0 \in H_0^1(\Omega) \cap H^{2m+1}(\Omega), \qquad
g_1 \in H_0^1(\Omega) \cap H^{2m-1}(\Omega),\qquad
f \in \zmm(R, T),
\end{align}
for some $R > 0$.
For convenience, we write
$\Cjl = \binom{l-1}{j}$.
For $l = 1, \ldots, m-1$, suppose $1- 2\beta(0)v(0) \neq 0$.
We recursively define
\begin{align}\label{def_gl}
    g_{l+1} \coloneqq  (1- 2\beta(0)v(0))^{-1}(\dt^{l-1} f(0) + G_l(0, x, v(0), \dt v(0), \ldots, \dt^{l-1}v(0)); g_0, \ldots, g_l),
\end{align}
where we introduce the notation
\begin{align}\label{def_Gl}
&G_l(t,x, v, \dt v, \ldots, \dt^{l-1}v; g_0, \ldots, g_l)\\
=&-\sum_{j=2}^{l}\Cjl\dt^{l+1-j} (1-2\beta(t) v(t)) g_j
+ \Laplace g_{l-1}
+  \Laplace g_l
- \sum_{j=0}^{l-1}\Cjl\dt^{l-1-j}q(t) g_j,\nonumber
\end{align}
We say $g_0, g_1, f$ satisfy the \textit{$m$th-order compatibility condition}, if
\begin{align}\label{def_comp_linear}
g_{l+1} \in H_0^1(\Omega) \cap H^{2(m-l) - 1}(\Omega),
\qquad \text{for }l = 1, \ldots, m-1.
\end{align}
when $a(0) \neq 0$ and $\dt^{l-1} a(0) \in C(\bar{\Omega})$ is valid,
for $l = 1, \ldots, m$.
For convenience, we write
\begin{align}\label{def_Ngf}
\Ngf = \|f\|_\zmm + \sum_{l=0}^m \|g_l\|_{H^{2(m-l)+1}}.
\end{align}

\begin{pp}\label{pp_energy}
    Let $m \geq 2k_n$ and $T > 0$ be fixed. Let $q, \beta \in C^\qCm([0,T]\times \bar{\Omega})$.
    Suppose $g_0, g_1$ and $f$ satisfy (\ref{def_gf}) and the $m$th-order compatibility condition defined as above.
    Then there exists $\epv> 0$ {depending on $\beta$ and the domain $\Omega$},
    such that for any $v \in \zm(\epv, T)$,
    the linear problem (\ref{eq_linear_f}) has a unique solution
\begin{align*}
             u \in \bigcap_{k=0}^{m} C^{m-k}([0,T]; H^{2k+1}(\Omega)),
\end{align*}
satisfying
\begin{align}\label{energy}
\|u\|^2_{\zm} \leq C (\|f\|_\zmm + \|g_0\|_{H^{2m+1}} + \|g_1\|_{H^{2m-1}}),
\end{align}
where $C$  depends on $m$, $T$, $\beta$, $q$, and the domain $\Omega$.
\end{pp}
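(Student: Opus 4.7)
The plan is to follow a Dafermos-type energy method: exploit the parabolic-type smoothing from the strong damping $-\dt\Laplace u$ to close a basic energy identity, then bootstrap by differentiating in $t$ to obtain the full mixed-regularity statement. As a preliminary step, I would choose $\epv$ small enough that $1-2\beta(t,x)v(t,x) \geq 1/2$ uniformly on $[0,T]\times \bar\Omega$; since $m \geq 2\kn$, Lemma \ref{lm_emb1} gives $\|v\|_{L^\infty} \leq C\|v\|_{\zm}\leq C\epv$, so the choice $\epv \leq (4C\|\beta\|_{L^\infty})^{-1}$ works. Existence and uniqueness at the basic level $u\in C^0([0,T];\hone_0(\Omega))\cap C^1([0,T];\ltwo(\Omega))$ then follow from a Galerkin approximation on eigenspaces of the Dirichlet Laplacian, with uniform bounds coming from the energy identity below and uniqueness from the same identity applied to a difference of two solutions.

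The basic energy estimate comes from multiplying the equation by $\dt u$ and integrating over $\Omega$. The crucial parabolic gain is that the $-\dt\Laplace u$ term contributes $\|\nabla\dt u(t)\|_{\ltwo}^2$ on the left-hand side. With
\[
E_0(t) = \tfrac{1}{2}\int_\Omega \bigl[(1-2\beta v)(\dt u)^2 + |\nabla u|^2\bigr]\, dx,
\]
one computes
\[
\frac{d}{dt}E_0(t) + \|\nabla\dt u(t)\|_{\ltwo}^2 = \tfrac{1}{2}\int_\Omega \dt(1-2\beta v)(\dt u)^2\, dx + \int_\Omega (qu + f)\dt u\, dx.
\]
Using Lemma \ref{lm_emb1} to bound $\|\dt(\beta v)\|_{L^\infty}\leq C(1+\|v\|_\zm)$, Gronwall's lemma controls $\|\dt u\|_{L^\infty([0,T];\ltwo)}$, $\|\nabla u\|_{L^\infty([0,T];\ltwo)}$, and $\|\nabla\dt u\|_{L^2([0,T];\ltwo)}$ by the data.

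For the higher-order mixed estimates, I would differentiate the equation $l$ times in $t$ for $l = 1, \ldots, m$. The function $u_l := \dt^l u$ satisfies a linear equation of the same form, with a new source obtained by Leibniz expansion from $\dt^j v$, $\dt^j q$, $\dt^j f$, and lower-order $u_j$; the required initial data $u_l(0) = g_l$ and $\dt u_l(0) = g_{l+1}$ are exactly those prescribed by the recursion (\ref{def_gl}), and the $m$th-order compatibility condition (\ref{def_comp_linear}) is precisely what guarantees $g_{l+1} \in \hone_0(\Omega) \cap H^{2(m-l)-1}(\Omega)$ so that the iterated base estimate closes. Once the iteration provides $\dt^{m-k} u \in C^0([0,T];\hone_0(\Omega))$, spatial regularity is upgraded by an elliptic argument: the substitution $w_l = u_l + \dt u_l$ reduces the equation, at each fixed $t$, to $-\Laplace w_l = \tilde F_l$ with $\tilde F_l$ known in the appropriate Sobolev class, so Dirichlet elliptic regularity gives $w_l \in H^{2k+1}$, and then integrating the first-order ODE $u_l + \dt u_l = w_l$ in $t$ transfers the regularity to $u_l$. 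Performing this in descending order $l = m, m-1, \ldots, 0$ yields $u \in \bigcap_{k=0}^m C^{m-k}([0,T]; H^{2k+1}(\Omega))$ together with the estimate (\ref{energy}).

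The main obstacle will be the bookkeeping in the time-differentiation step: each application of $\dt^l$ to $(1-2\beta v)\ddt u$ produces commutators of the form $\dt^j v \cdot \dt^{l-j+2} u$, and closing the iteration requires systematically converting the $\zm$-norm of $v$ into $L^\infty$-bounds on $\dt^j v$ via Lemma \ref{lm_emb1}, then absorbing the top-order coupling onto the left-hand side by further shrinking $\epv$. Verifying that the recursive definition (\ref{def_gl}) of the $g_l$ matches, not only formally but as an identity at the correct Sobolev level, the initial data produced by the $l$th differentiated equation is a second delicate but elementary step tying the algebraic structure of the equation to the boundary constraint $g_{l+1} \in \hone_0(\Omega)$.
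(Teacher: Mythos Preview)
Your overall strategy---choose $\epv$ so that $1-2\beta v\geq 1/2$, run a Galerkin scheme on Dirichlet eigenfunctions, differentiate in time, and then upgrade spatial regularity---matches the paper's architecture, and your basic energy identity is correct. The genuine difference is in how the spatial regularity is obtained. The paper stays inside the Galerkin approximation and tests the $l$-times differentiated equation against $w=\dt^l(-\Laplace)^{2k}u_i$, exploiting that $(-\Laplace)^j u_i$ vanishes on $\partial\Omega$ for the finite-dimensional approximants; this produces directly the mixed estimates $\int_0^t\|\dt^l u_i\|_{H^{2k+1}}^2\,ds\leq C\Ngf$ by a double induction on $l$ and $k$. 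Your route---rewrite the differentiated equation as $-\Laplace(u_l+\dt u_l)=\tilde F_l$, invoke Dirichlet elliptic regularity, and integrate the ODE $u_l+\dt u_l=w_l$---is a legitimate alternative and arguably more transparent, since it separates the parabolic time-smoothing from the elliptic space-smoothing. It does work: from $w_l\in L^2_t H^{2(m-l)+1}$ and $g_l\in H^{2(m-l)+1}$ the Duhamel formula gives $u_l\in C^0_t H^{2(m-l)+1}$, and then $\dt u_l=w_l-u_l\in L^2_t H^{2(m-l)+1}$ feeds into $\tilde F_{l-1}$ at exactly the right level for the next step.

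One point you should be aware of, since it is where the paper spends most of its effort and you flag only in passing: the commutators $\dt^{l+1-j}(\beta v)\cdot\dt^{j}u$ cannot always be estimated by putting $\dt^{l+1-j}(\beta v)$ in $L^\infty$. When $l$ is near $m+1$ and $j$ is small, Lemma~\ref{lm_emb1} does not give $\dt^{l+1-j}(\beta v)\in C^0_t C^0_x$, and one must instead use the already-established higher \emph{spatial} regularity of $\dt^j u$ (for small $j$) to put that factor in $L^\infty$ and leave $\dt^{l+1-j}(\beta v)$ in $L^2$. The paper handles this by splitting the sum at an index $L=m-\lceil(\kn-1)/2\rceil+1$ and running the spatial-regularity induction first up to $k+l\leq L$ before closing the remaining time derivatives. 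Your descending-in-$l$ elliptic scheme will hit the same wall and needs the same two-phase structure; it is not resolved merely by ``further shrinking $\epv$'', since the issue is regularity of $\dt^{l+1-j}v$, not its size.
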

\begin{proof}
    First, we would like to prove there exists a solution $u$ such that
    \begin{align}\label{eq_Hm}
        u \in \bigcap_{k=0}^{m+1} H^{m+1-k}([0,T]; H^{2k+1}(\Omega)),
        \quad   \text{with } \sum_{k=0}^{m+1} \int_0^T  \|\partial_t^{m+1-k} v(s)\|^2_{{H^{2k+1}}} \diff s \leq  C \Ngf.
    \end{align}
    Then by Lemma \ref{lm_emb1} and an estimate for $\Ngf$, we have the desired result.

    In the following, recall we write $a(t) = 1 - 2\beta v$ and  $C$ denotes a generic positive constant that depends on $m$, $T$, $\beta$, $q$, $\epv$, and the domain $\Omega$.
    With $\beta \in C^\infty([0,T]\times \bar{\Omega})$, we choose $\epv$ small enough such that $a$ satisfies
    \[
    \frac{1}{2}\leq  a(t, x) \leq \frac{3}{2}, \quad \text{for any } (t,x) \in [0, T] \times \Omega.
    \]
    Moreover, by Lemma \ref{lm_emb1}, we have
        \begin{align}\label{eq_at}
            \dt^{l} a \in C([0,T]; C^{k}(\bar{\Omega})), \quad \text{ if } 2l+k \leq 2m-\kn + 1.
            \end{align}

    We use the Galerkin approximation method and construct a sequence of approximate solutions $\un(t)$ to the equation above.
    Let $0= \lambda_0< \lambda_1\leq \lambda_2<\cdots$ be the eigenvalues $-\Laplace$ with vanishing Dirichlet boundary condition, listed according to their multiplicity, with the corresponding orthonormal eigenfunctions $\phi_k$ in $L^2(\Omega)$, where $k\in \mathbb{N}$.
    We consider  approximate solutions $\un(t)$ given by $\un(t)= \sum_{k=1}^i u_{i,k}(t) \phi_k$, which satisfies
    \begin{align}\label{eq_dtl}
    &\lge  a(t) \dt^{l+1} \un , w \rge +
    \lge \sum_{j=2}^{l} \Cjl\dt^{l+1-j} a(t) \dt^{j} \un , w \rge - \lge \dt^{l-1}\Laplace \un , w \rge
    - \lge \dt^l \Laplace \un , w \rge \\
    &\qquad \qquad \qquad \qquad \qquad \qquad\qquad \qquad +\lge \sum_{j=0}^{l-1}\Cjl\dt^{l-1-j}q \dt^j \un, w \rge
    =  \lge \dt^{l-1}f(t,x), w \rge, \nonumber
    \end{align}
    for any $t \in [0, T]$ and any $w$ in the space spanned by $\phi_1, \ldots, \phi_i$.
    To get this, we formally differentiate the original equation $l-1$ times with respect to $t$ and the initial conditions are
    \[
    \dt^j \un(0) = g_j, \quad j= 0, 1, \ldots, l,
    \]
    where $g_j$ is defined in (\ref{def_gl}).
    Note when $l = 1$, we do not have the second term.
    There exists a solution $u_{i,k}(t)$ to the ODE obtained from the equation above.
    We derive prior energy estimates for $\un(t)$ in the following.

    \noindent\textbf{Step 1.} We set $w = \dt^l \un$ and we integrate it  with respect to $t$.
    We estimate each term below.
    From the first term, we have
    \begin{align*}
        \int_0^t \lge a(s) \dt^{l+1} \un(s), \dt^l \un(s) \rge \diff s
        &=
        \frac{1}{2} \lge a(s) \dt^l \un(s), \dt^l \un(s) \rge|_0^t -\frac{1}{2} \int_0^t \lge\dt a(s) \dt^l \un(s), \dt^l \un(s) \rge \diff s,\\
        & \geq \frac{1}{4} \|\dt^l \un(t)\|_\ltwo^2
        - \frac{3}{4} \|\dt^l \un(0)\|_\ltwo^2
        - C \int_0^t \|\dt^l \un(s)\|_\ltwo^2 \diff s.
    \end{align*}
    Recall we have the {compatible initial conditions $\dt^l \un(0) = g_l \in H^1_0(\Omega)$.}
    Next, we estimate
    \begin{align*}
        -\int_0^t \lge \dt^{l-1} \Laplace \un(s) , \dt^l \un(s) \rge \diff s
        &= \frac{1}{2}\|\dt^{l-1} \nabla \un(t)\|_\ltwo^2 - \frac{1}{2}\|\dt^{l-1} \nabla \un(0)\|_\ltwo^2,\\
        -\int_0^t \lge \dt^l \Laplace \un(s) , \dt^l \un(s) \rge \diff s
        &=
        \int_0^t  \|\dt^{l} \nabla \un(s) \|_\ltwo^2 \diff s,\\
         -\int_0^t \lge \sum_{j=0}^{l-1}\Cjl\dt^{l-1-j}q \dt^j \un, \dt^l \un(s) \rge
        & \leq
        C\sum_{j=0}^{l} \int_0^t \|\dt^j u(s)\|_\ltwo \diff s,
    \end{align*}
   and
    \begin{align*}
        \int_0^t \lge \dt^{l-1} f(s) , \dt^l \un(s) \rge \diff s
        \leq \int_0^t  \|\dt^{l-1} f(s)\|_\ltwo^2 + \|\dt^{l} \un(s)\|_\ltwo^2 \diff s.
    \end{align*}
To get an estimate for the second term of (\ref{eq_dtl}), we consider two different cases,  when $l \leq L$ and $l \geq  L+1$,
where we define
\begin{align}\label{eq_L}
    L = m-\Ckn + 1,
\end{align}
with $\kn$ defined in (\ref{eq_kn}).
In the first case, we have
$l+1-j \leq m-\Ckn$ for any $j \geq 2$, which implies
$\dt^{l+1-j}a \in C([0,T]; C(\bar{\Omega}))$ by (\ref{eq_at}).
Then we have
\begin{align}\label{eq_modify1}
    &\sum_{j=2}^{l}  \int_0^t \lge \Cjl \dt^{l+1-j} a(s) \dt^{j} \un(s) , \dt^l \un(s) \rge \diff s
     \leq C\sum_{j=2}^{l}  \int_0^t \| \dt^{j} \un(s) \|_\ltwo^2 \diff s,
\end{align}
for $l = 1, \ldots, L$.
    It follows that
    \begin{align*}
        \frac{1}{4}  \|\dt^l \un(t)\|_\ltwo^2 + \frac{1}{2}\|\dt^{l-1} \nabla \un \|_\ltwo^2  + \int_0^t  \|\dt^{l} \nabla \un(s) \|_\ltwo^2 \diff s
        \leq
         \Ngf
        + C  \sum_{j=2}^{l}  \int_0^t \| \dt^{j} \un \|_\ltwo^2 \diff s.
    \end{align*}
    We summarize over $l=1, \ldots, L$ and use the Gr\"onwall inequality to have
    \begin{align*}
        \sum_{l=1}^{L}   \|\dt^l u(t)\|_\ltwo^2 + \|\dt^{l-1} \nabla \un \|_\ltwo^2  +  \int_0^t  \|\dt^{l} \nabla \un(s) \|_\ltwo^2 \diff s
        \leq
        C\Ngf. 
    \end{align*}
    In particular, using the Poincar\'e inequality, we have
    \begin{align}\label{eq_dtlu}
        \sum_{l=0}^{L}   \int_0^t   \|\dt^l u(s)\|_\ltwo^2  +  \|\dt^{l} \nabla \un(s) \|_\ltwo^2 \diff s
        \leq
        C \Ngf.
    \end{align}
    When $l \geq L+1$, we need a new estimate for (\ref{eq_modify1}) using higher order regularity.

    \noindent\textbf{Step 2.} We would like to derive higher-order regularity estimates.
    More explicitly, for $l = 1, \ldots, L$, we rewrite (\ref{eq_dtl}) as
    \begin{align}\label{eq_dtl_new}
        \lge \dt^{l-1}(-\Laplace) \un, w \rge
        +  \lge \dt^l(-\Laplace) \un , w \rge
        = -\lge & a(t) \dt^{l+1} \un, w \rge
        - \lge \sum_{j=2}^{l} \Cjl  \dt^{l+1-j} a(t) \dt^{j} \un, w \rge\\
        &- \lge \sum_{j=0}^{l-1}\Cjl\dt^{l-1-j}q \dt^j \un, w \rge
        +  \lge  \dt^{l-1}f(t,x), w \rge,\nonumber
    \end{align}
    where we set $w = \dt^{l} (-\Laplace)^{2k} \un$,
    for non-negative integer $k$ satisfying ${k+l \leq L}$.
    By (\ref{eq_at}), it follows that
    \begin{align}\label{eq_a_ljk}
    {\dt^{l-1}a \in C([0, T]; C^{2k}(\bar{\Omega})).}
    \end{align}
    We have
    \begin{align*}
        \lge \dt^{l-1}(-\Laplace) \un, \dt^{l} (-\Laplace)^{2k} \un \rge
        &= \frac{1}{2} (\dt \| \dt^{l-1} (-\Laplace)^{k} \nabla \un\|_\ltwo^2), 
        \\
        \lge \dt^l (-\Laplace)  u , \dt^{l} (-\Laplace)^{2k} \un \rge
        &=
        \| \dt^{l} (-\Laplace)^{k} \nabla \un \|_\ltwo^2, 
    \end{align*}
    where we integrate by parts and using the property that $((-\Laplace)^j \un) |_{\partial \Omega} = \sum_{k=1}^i \lambda_k^j \phi_k|_{\partial \Omega}  = 0$.
    {Recall the initial condition $\dt^{l-1} u(0) = g_{l-1} \in H^{2(m-l)+3}(\Omega)$. }
    Moreover, we have
    \begin{align}\label{eq_modify3}
        &-\lge a \dt^{l+1} \un, \dt^{l} (-\Laplace)^{2k} \un \rge \\
        =&  \lge \nabla (-\Laplace)^{k-1}(a \dt^{l+1} \un), \dt^{l} \nabla(-\Laplace)^{k} \un\rge \nonumber \\
        \leq  &C \|\dt^{{l+1}} \un\|_{H^{2k-1}}^2 + \frac{1}{4}\|\dt^{{l}} \un\|_{H^{2k+1}}^2. \nonumber
    \end{align}
    In addition, using (\ref{eq_a_ljk}), we have
    \begin{align}\label{eq_modify4}
        & - \sum_{j=2}^{l}   \lge \Cjl \dt^{l+1-j} a \dt^{j} \un, \dt^{l} (-\Laplace)^{2k}  \un \rge \\
        = & -  \lge \sum_{j=2}^{l}  \Cjl \nabla (-\Laplace)^{k-1} (\dt^{l+1-j} a(t) \dt^{j} \un) , \dt^{l} \nabla (-\Laplace)^k \un \rge \nonumber \\
        \leq &
        C\sum_{j=2}^{l} \| \dt^{j} \un\|_{H^{2k-1}}^2 + \frac{1}{4}\|\dt^{l} \un\|_{H^{2k+1}}^2\nonumber,
    \end{align}
    and
    \begin{align*}
        \lge \dt^{l-1} f, \dt^{l} (-\Laplace)^{2k} \un \rge
        \leq & 4\| \dt^{l-1} (-\Laplace)^{k-1}\nabla  f\|_\ltwo^2 + \frac{1}{4}\| \dt^{l} (-\Laplace)^{k} \nabla  \un\|_\ltwo^2\\
        \leq & 4\| \dt^{l-1}  f\|_{H^{2k-1}}^2 +\frac{1}{4}\| \dt^{l}   \un \|_{H^{2k+1}}^2.
    \end{align*}
    Combining the inequalities above, we have
    \begin{align}\label{eq_induction_even}
        \frac{1}{4}\int_0^t \| \dt^{l} \un(s)\|_{H^{2k+1}}^2 \diff s
        \leq
        C (\sum_{j=2}^{l+1} \int_0^t     \| \dt^{j} \un(s) \|_{H^{2k-1}}^2
        \diff s + \Ngf),
    \end{align}
    for any non-negative integer $l + k \leq L$.
%

     Further, we would like to use an inductive procedure to show
    \begin{align}\label{rm_Hmk}
    \sum_{l=0}^{L-k}\int_0^t \| \dt^{l} \un (s)\|_{H^{2k+1}}^2 \diff s
    \leq C \Ngf,
    \quad \text{for any } k = 0, \ldots,  L.
    \end{align}
    Indeed, this is true for $k=0$, since in (\ref{eq_dtlu}) we have the estimates for $\|\dt^{l} \un \|_\hone$, when $l = 0, \ldots, L$.
    Let $k' \geq 1$ and assume (\ref{rm_Hmk}) holds for $k \leq k'$.
    We would like to prove it is also true for $k = k'+1$.
    Using (\ref{eq_induction_even}), we have
     \begin{align*}
        \int_0^t \| \dt^{l} \un(s)\|_{H^{2k'+3}}^2 \diff s
        \leq C (\sum_{j=2}^{l+1} \int_0^t \| \dt^{j} \un(s) \|_{H^{2k' + 1}}^2
        \diff s + \Ngf)
    \end{align*}
     if $l \leq L - k'-1$.
    By the assumption, this is bounded by $C\Ngf$, since $j \leq L-k'$ when $j \leq l+1$.
    Thus, we have the desired estimate for $k'+1$. By induction, we prove (\ref{rm_Hmk}).

   \noindent\textbf{Step 3.} We would like to finish Step 1 and prove
    (\ref{eq_dtlu}) for $L +1 \leq l \leq m+1$.
    In this case, we rewrite (\ref{eq_modify1}) as
    \begin{align*}
    &\sum_{j=2}^{l}  \int_0^t \lge \Cjl \dt^{l+1-j} a(s) \dt^{j} \un(s) , \dt^l \un(s) \rge \diff s\\
    =&\int_0^t \sum_{j=l-L+2}^{l}   \lge \Cjl \dt^{l+1-j} a(s) \dt^{j} \un(s) , \dt^l \un(s) \rge
    + \sum_{j=2}^{l-L+1} \lge \Cjl \dt^{l+1-j} a(s) \dt^{j} \un(s) , \dt^l \un(s) \rge \diff s.
    \end{align*}
When $j \geq l-L+2$, by (\ref{eq_at}) and (\ref{eq_L}) we have $\dt^{l+1-j} a \in C([0,T]; C(\bar{\Omega}))$.
In this case, one can estimate the first term as before.
When $j \leq l-L+1$,
we have
\[
j \leq (m+1)-L+1  = \lceil (\kn+1)/2 \rceil \leq L - 1,
\]
since $m \geq 2 \kn$.
By Lemma \ref{lm_emb1}, this implies
    \begin{align*}
    \int_0^t \|\dt^{l+1-j} a(s) \dt^{j} \un(s) \|_\ltwo  \diff s
    &\leq \|\dt^j \un\|_{ C([0,T]; C(\Omega))}   \|\dt^{l+1-j} a\|_\ltwo\\
    &\leq C\|\dt^j \un\|_{H^1([0,T]; H^{2k+1}(\Omega))},
\end{align*}
where we set $k = \lceil (\kn-1)/2 \rceil$.
With $j \leq l-L+1$,
note that
\[
j + k \leq \kn + 1 \leq L,\]
where the last inequality holds as $m \geq 2\kn$.
By (\ref{rm_Hmk}),
we have $\|\dt^j \un\|_{H^1([0,T]; H^{2k+1}(\Omega))} \leq C\Ngf$,
which implies
\begin{align*}
    &\sum_{j=2}^{l}  \int_0^t \lge \Cjl \dt^{l+1-j} a(s) \dt^{j} \un(s) , \dt^l \un(s) \rge \diff s
    \leq C(\sum_{j=l-L+2}^{l}  \int_0^t \| \dt^{j} \un(s) \|_\ltwo \diff s
    + \Ngf).
\end{align*}
The rest of terms have the same estimates as before.
This proves a complete version of (\ref{eq_dtlu}), i.e.,
\begin{align}\label{eq_dtlu_new}
    \sum_{l=0}^{m+1}  \int_0^t  \|\dt^{l} \un \|_\ltwo^2 \diff s
    + \int_0^t  \|\dt^{l} \nabla \un \|_\ltwo^2 \diff s
    \leq
    C \Ngf.
\end{align}

\noindent\textbf{Step 4.} Thus, from (\ref{eq_dtlu_new}), we can conclude that $\{\un\}_{n=1}^\infty$ is bounded in $H^{m}([0,T]; H^1_0(\Omega))$, with desired estimates.
We may extract a subsequence which converges weakly to some
\[
u \in  H^{m+1}([0,T]; H^1_0(\Omega)).
\]
By a standard argument, we can show $u$ is a weak solution to (\ref{eq_linear}).

    Last, we would like to show  such $u$ is in $H^{m+1-k}([0,T]; H^{2k+1}(\Omega))$ with the desired estimate (\ref{eq_Hm}) for $k = 0, \ldots, m$ by an inductive procedure.
    Following the same proof of (\ref{eq_dtlu_new}) and passing to limits as $i \rightarrow +\infty$, this statement is true for $k = 0$.
    Now we prove by induction.
    Assume that $u \in H^{m+1-k}([0,T]; H^{2k+1}({\Omega}))$ satisfies
    \begin{align}\label{assump_Hmk}
        \sum_{j=0}^{m+1-k}  \int_{0}^t  \|\partial_t^{l} u(s)\|^2_{H^{2k+1}} \diff s
        \leq C\Ngf,
    \end{align}
    for any $k \leq k' -1 $,
    where $1 \leq k' \leq m+1$ is an integer.
    In the following, we would like to prove $u \in H^{m-k'}([0,T]; H^{2k'+1}(\Omega))$
    with the desired estimate.
    It suffices to show that
    \[
    \int_{0}^t  \|\partial_t^{l} u(s)\|^2_{H^{2k'+1}} \diff s
    \leq C\Ngf,
    \]
    for any $l +k' \leq m+1$.
    We prove this for $\un(t)$ and then pass to limits as $i \rightarrow +\infty$ as before.
    Indeed, we consider (\ref{eq_dtl_new}) and set $w = \dt^{l} (-\Laplace)^{2k'} \un$.
    Previously, we have shown this conclusion for $k'+ l \leq L$, see (\ref{eq_induction_even}).

    Now we consider the case when $l + k' \leq m+1$ but $l + k' \geq L+1$.
    We follow the same estimates as before, except for (\ref{eq_modify3}) and (\ref{eq_modify4}).
    Recall for (\ref{eq_modify3}) and  (\ref{eq_modify4}), actually we need to derive an estimate for the following terms
    \begin{align*}
        &\sum_{j = 2}^{l+1} \int_0^t  \|\nabla (-\Laplace)^{k'-1}(\dt^{l+1-j} a(s) \dt^{j} u(s)) \|_\ltwo^2 \diff s.
    \end{align*}
    Let $k_1, k_2$ be non-negative integers satisfying $k_1 + k_2 = 2k'-1$.
    According to (\ref{eq_at}), we have $\dt^{l+1-j} a \in C([0,T];C^{k_1}(\bar{\Omega}))$ if
    \begin{align*}
    2(l+1-j) + k_1 \leq 2m-\kn+1 \quad \Rightarrow \quad 2l-2j + k_1 \leq 2m - \kn-1.
    \end{align*}
    On the other hand,  we have $\dt^{j} u \in C([0,T];C^{k_2}(\bar{\Omega}))$ if
    \begin{align*}
        2j + k_2 \leq 2m-\kn +1.
    \end{align*}
    Note that
    \[
    (2l-2j + k_1) + (2j + k_2) = 2l+2k'-1 \leq 2m + 1 \leq 4m -2\kn
    \]
    is always true, as $m \geq 2 \kn$.
    This implies we have either  $\dt^{l+1-j} a \in C([0,T];C^{k_1}(\bar{\Omega}))$ or $\dt^{j} u \in C([0,T];C^{k_2}(\bar{\Omega}))$.
    In the first case, i.e., $2l-2j + k_1 \leq 2m - \kn-1$,
    we derive the estimate as before.
    Otherwise, suppose $2l-2j + k_1 \geq 2m - \kn$.
    On the one hand, we need $\dt^j u \in H^1([0,t]; H^{k_2 + \kn}(\Omega))$.
    On the other hand,
    note that we have
    \[
    2j + k_2 \leq (2l+2k'-1) - (2m - \kn) \leq  \kn + 1,
    \]
    which implies
    \begin{align*}
    (j+1) + \lceil{(k_2+\kn-1)}/{2}\rceil
    &\leq (j+1) + (k_2+\kn-1)/{2} + {1}/{2}\\
    & = (2j + k_2)/2 + \kn/2 + 1
    < \kn + 2.
    \end{align*}
    To use (\ref{rm_Hmk}), recall $m \geq 2 \kn$ such that
    $\kn + 1 \leq L$.
    Hence, we use Lemma \ref{lm_emb1} and (\ref{rm_Hmk}) to have
    \[
    \|\dt^j u \|_{C([0,T];C^{k_2}(\bar{\Omega}))} \leq C\|\dt^j u \|_{H^1([0,T];H^{k_2 + \kn}(\Omega))} \leq C\Ngf.
    \]
    To conclude, we have
    \begin{align*}
        \int_0^t \| \dt^{l} u(s)\|_{H^{2k'+1}}^2 \diff s
        \leq
        C (\sum_{j=2}^{l+1} \int_0^t     \| \dt^{j} \un(s) \|_{H^{2k'-1}}^2
        \diff s + \Ngf),
    \end{align*}
    for any non-negative integers $l+k' \leq m+1$.
    By induction, we have (\ref{eq_Hm}).

    Note with $v \in \zm(\epv,T)$ and $f \in \zmm(R,T)$, a similar inductive procure shows from (\ref{def_gl}) we have
    \[
    \|g_{l+1}\|_{H^{2(m-l-1)+1}} \leq C(\|f\|_\zmm + \|g_0\|_{H^{2m+1}} + \|g_1\|_{H^{2m-1}}), \quad l  = 1, \ldots, m-1,
    \]
    for a constant $C$ depends on $m$, $\beta$, $q$, and the domain $\Omega$.
    Then by Lemma \ref{lm_emb1}, we prove the result.
\end{proof}
For the nonlinear problem, we need the following lemma.
\begin{lm}\label{lm_compt}
    For $v \in \tzm(r,T)$, the initial conditions $\tg_0, \tg_1$ and the source $F(u_h, v)$ satisfy the $m$th-order compatibility condition (\ref{def_comp_linear}) for the linearized equation (\ref{eq_linear_w}).
\end{lm}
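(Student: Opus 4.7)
The plan is to define $\tg_l := g_l - \dt^l u_h(0)$ for $l = 0,1,\ldots,m$, where $g_l$ is given by the nonlinear recursion (\ref{def_gl_nl}) applied to the compatible triple $(g_0, g_1, h)$, and to verify that these functions are precisely the ones produced by the linearized recursion (\ref{def_gl}) from the data $(\tg_0, \tg_1, F(u_h, v))$. Once this identification is in place, the compatibility condition (\ref{def_comp_linear}) follows at once: the trace on $\partial\Omega$ vanishes because $g_l|_{\partial\Omega} = \dt^l h(0) = \dt^l u_h(0)|_{\partial\Omega}$ by (\ref{def_comp_nl}) and the definition of $u_h$, and the Sobolev regularity $\tg_{l+1} \in H^{2(m-l)-1}(\Omega)$ is inherited from $g_{l+1}$ (checked inductively from (\ref{def_gl_nl})) combined with the smoothness of $u_h$, $\beta$, and $q$.

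For the algebraic identification, I would introduce the auxiliary function $\tilde u := v + u_h$. Since $v \in \tzm(r,T)$ satisfies $\dt^j v(0) = \tg_j$ by the very definition of $\tzm$, one has $\dt^j \tilde u(0) = g_j$ for $j = 0, \ldots, m$. Consequently, all Taylor coefficients at $t=0$ of the linearized coefficient $1 - 2\beta(v+u_h)$ and of the source $F(u_h, v) = -P u_h + 2\beta(\dt\tilde u)^2 + 2\beta\, \tilde u\, \ddt u_h$ coincide with the corresponding quantities that arise when one formally differentiates the nonlinear equation (\ref{eq_nl}) at $t = 0$ with $\dt^j u(0) = g_j$. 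A Leibniz expansion then shows that the linearized recursion (\ref{def_gl}) applied to $(\tg_0, \tg_1, F(u_h, v))$ is obtained from the nonlinear recursion (\ref{def_gl_nl})--(\ref{def_tG}) by the substitution $g_j = \tg_j + \dt^j u_h(0)$; subtracting $\dt^{l+1} u_h(0)$ from both sides of the nonlinear recursion yields exactly the linearized output, i.e.\ $\tg_{l+1} = g_{l+1} - \dt^{l+1} u_h(0)$.

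The induction is driven by this identification: assuming $\tg_j = g_j - \dt^j u_h(0)$ for $j \leq l$, the linearized recursion produces $\tg_{l+1}$ with the same identity at level $l+1$, and the boundary/regularity conclusions of (\ref{def_comp_linear}) then follow from (\ref{def_comp_nl}) as described above. The main obstacle is the symbolic bookkeeping: one has to match, term by term, the Leibniz expansion of $\dt^{l-1}\bigl[(1 - 2\beta u)\ddt u - 2\beta(\dt u)^2 - \Delta u - \dt\Delta u + qu\bigr]$ at $t = 0$ under $u = w + u_h$ with the corresponding expansion of the linearized equation, keeping careful track of how the contribution of $u_h$ is reabsorbed into the source $F$. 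This is tedious but purely algebraic, and reduces to comparing the coefficients of each monomial in the variables $\dt^j u_h(0)$, $\dt^j v(0)$, and the derivatives of $\beta$ and $q$ at $t = 0$.
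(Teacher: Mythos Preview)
Your proposal is correct and follows essentially the same approach as the paper: both arguments use the key fact that $\dt^j(v+u_h)(0)=g_j$ (from the definition of $\tzm$) and then verify by induction that the linearized recursion (\ref{def_gl}) applied to $(\tg_0,\tg_1,F(u_h,v))$ reproduces $\tg_{l+1}=g_{l+1}-\dt^{l+1}u_h(0)$, from which the boundary and regularity requirements of (\ref{def_comp_linear}) follow. The paper makes the algebraic step slightly more explicit by computing $\dt^{l-1}F$ directly and invoking the linearity of $G_l$ in its second group of arguments, but this is exactly the Leibniz bookkeeping you describe.
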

\begin{proof}
    Indeed, we need to check whether
    \begin{align*}
        \hat{g}_{l+1} \coloneqq
        &(1-2\beta(0)(v(0)+u_h(0)))^{-1}(\dt^{l-1} F(0) \\
        &+ G_l(0, x,v(0)+u_h(0), \dt (v(0)+u_h(0)), \ldots, \dt^{l-1}(v(0)+u_h(0))); \hat{g}_0, \ldots, \hat{g}_l)
    \end{align*}
    belongs to $H_0^1(\Omega) \cap H^{2(m-l)-1}(\Omega)$ or not,
    where we have $\hat{g}_0 = \tg_0$, $\hat{g}_1 = \tg_1$, and $G_l$ defined in (\ref{def_Gl}).

    Note with the initial conditions $\dt^l v(0) = \tg_l$, one has $\dt^l (v(0) + u_h(0)) =g_l$,
    for $l = 0, \ldots, m$.
    This implies
    \begin{align*}
        \hat{g}_{l+1} =
        (1-2\beta(0)g_0)^{-1}(\dt^{l-1} F(0)
        + G_l(0, x, g_0, g_1, \ldots, g_{l-1}; \hat{g}_0, \ldots, \hat{g}_l)
    \end{align*}
    In the following, we would like  to show $\hat{g}_{l+1}= \tg_{l+1}$ also holds for $l  = 1, \ldots, m-1$, by an inductive procedure.
    Assume this is true for any $l \leq L-1$, where $L \geq 1$.
    We check the case $l = L$.
    We compute
    \begin{align*}
        \dt^{l-1}F(u_h, v)
        = &-(1-2\beta(v+u_h))\dt^{l+1} u_h
        + 2 \dt^{l-1}(\beta (\dt(v+u_h))^2)\\
        &+ G_l(t,x, v+u_h, \dt (v+u_h), \ldots, \dt^{l-1}(v+u_h); h(0), \ldots, \dt^l h(0)).
    \end{align*}
    Note that $G_l$ is linear in the second group of arguments.
    It follows that
    \[
    \hat{g}_{l+1} =- \dt^{l+1}h(0) + (1-2\beta(0)g_0)^{-1}( \tilde{G}(0,x,g_0, \ldots, g_l)),
    \]
    where we compare (\ref{def_tG}) and (\ref{def_Gl}).
    Then by definition we have  $\hat{g}_{l+1} = \tg_{l+1}$ is true for $l = L$.
    By induction, this is true for $l = 1, \ldots, m-1$.
    As $\tg_l \in H_0^1(\Omega) \cap H^{2(m-l)+1}(\Omega)$, we have the desired result.
\end{proof}

\bibliography{local_ma}

\begin{thebibliography}{10}

\bibitem{acosta2022nonlinear}
Sebastian Acosta, Gunther Uhlmann, and Jian Zhai.
\newblock Nonlinear ultrasound imaging modeled by a westervelt equation.
\newblock {\em SIAM Journal on Applied Mathematics}, 82(2):408--426, 2022.

\bibitem{dafermos1985energy}
Constantine~M Dafermos and William~J Hrusa.
\newblock Energy methods for quasilinear hyperbolic initial-boundary value
  problems: Applications to elastodynamics.
\newblock {\em Archive for Rational Mechanics and Analysis}, 87(3):267--292,
  1985.

\bibitem{eptaminitakis2022weakly}
Nikolas Eptaminitakis and Plamen Stefanov.
\newblock Weakly nonlinear geometric optics for the westervelt equation and
  recovery of the nonlinearity.
\newblock {\em arXiv preprint arXiv:2208.13945}, 2022.

\bibitem{evans2022partial}
Lawrence~C Evans.
\newblock {\em Partial differential equations}, volume~19.
\newblock American Mathematical Society, 2022.

\bibitem{fu2023inverse}
Song-Ren Fu, Peng-Fei Yao, and Yongyi Yu.
\newblock Inverse problem of recoverying a time-dependent nonlinearity
  appearing in third-order nonlinear acoustic equations.
\newblock {\em arXiv preprint arXiv:2308.10805}, 2023.

\bibitem{H2005analysis}
L.~H{\"o}rmander.
\newblock {\em The Analysis of Linear Partial Differential Operators II:
  Differential Operators with Constant Coefficients}.
\newblock Classics in Mathematics. Springer Berlin Heidelberg, 2005.

\bibitem{Hoermander2003}
Lars H{\"o}rmander.
\newblock {\em The Analysis of Linear Partial Differential Operators I}.
\newblock Springer Berlin Heidelberg, 2003.

\bibitem{isakov1991completeness}
Victor Isakov.
\newblock Completeness of products of solutions and some inverse problems for
  pde.
\newblock {\em Journal of differential equations}, 92(2):305--316, 1991.

\bibitem{kaltenbacher2011well}
Barbara Kaltenbacher, Irena Lasiecka, and Slobodan Veljovi{\'c}.
\newblock Well-posedness and exponential decay for the westervelt equation with
  inhomogeneous dirichlet boundary data.
\newblock {\em Parabolic Problems: The Herbert Amann Festschrift}, pages
  357--387, 2011.

\bibitem{kaltenbacher2021identification}
Barbara Kaltenbacher and William Rundell.
\newblock On the identification of the nonlinearity parameter in the westervelt
  equation from boundary measurements.
\newblock {\em arXiv preprint arXiv:2102.07608}, 2021.

\bibitem{kaltenbacher2022inverse}
Barbara Kaltenbacher and William Rundell.
\newblock On an inverse problem of nonlinear imaging with fractional damping.
\newblock {\em Mathematics of Computation}, 91(333):245--276, 2022.

\bibitem{kaltenbacher2023nonlinearity}
Barbara Kaltenbacher and William Rundell.
\newblock Nonlinearity parameter imaging in the frequency domain.
\newblock {\em arXiv preprint arXiv:2303.09796}, 2023.

\bibitem{kaltenbacher2023simultaneous}
Barbara Kaltenbacher and William Rundell.
\newblock On the simultaneous reconstruction of the nonlinearity coefficient
  and the sound speed in the westervelt equation.
\newblock {\em Inverse Problems}, 39(10):105001, 2023.

\bibitem{kaltenbacher2018fundamental}
Barbara Kaltenbacher and Mechthild Thalhammer.
\newblock Fundamental models in nonlinear acoustics part i. analytical
  comparison.
\newblock {\em Mathematical Models and Methods in Applied Sciences},
  28(12):2403--2455, 2018.

\bibitem{krupchyk2010borg}
Katya Krupchyk and Lassi P{\"a}iv{\"a}rinta.
\newblock A borg-levinson theorem for higher order elliptic operators.
\newblock {\em arXiv preprint arXiv:1011.1995}, 2010.

\bibitem{krupchyk2020remark}
Katya Krupchyk and Gunther Uhlmann.
\newblock A remark on partial data inverse problems for semilinear elliptic
  equations.
\newblock {\em Proceedings of the American Mathematical Society},
  148(2):681--685, 2020.

\bibitem{kurylev2018inverse}
Yaroslav Kurylev, Matti Lassas, and Gunther Uhlmann.
\newblock Inverse problems for lorentzian manifolds and non-linear hyperbolic
  equations.
\newblock {\em Inventiones mathematicae}, 212:781--857, 2018.

\bibitem{lai2023partial}
Ru-Yu Lai, Xuezhu Lu, and Ting Zhou.
\newblock Partial data inverse problems for the nonlinear {Schr\"odinger}
  equation.
\newblock {\em arXiv preprint arXiv:2306.15935}, 2023.

\bibitem{lassas2020partial}
Matti Lassas, Tony Liimatainen, Yi-Hsuan Lin, and Mikko Salo.
\newblock Partial data inverse problems and simultaneous recovery of boundary
  and coefficients for semilinear elliptic equations.
\newblock {\em Revista Matem{\'a}tica Iberoamericana}, 37(4):1553--1580, 2020.

\bibitem{lassas2022inverse}
Matti Lassas, Lauri Oksanen, Mikko Salo, and Alexander Tetlow.
\newblock Inverse problems for non-linear {Schr\" odinger} equations with
  time-dependent coefficients.
\newblock {\em arXiv preprint arXiv:2201.03699}, 2022.

\bibitem{lassas2018inverse}
Matti Lassas, Gunther Uhlmann, and Yiran Wang.
\newblock Inverse problems for semilinear wave equations on lorentzian
  manifolds.
\newblock {\em Communications in Mathematical Physics}, 360:555--609, 2018.

\bibitem{li2023elas}
Li~Li.
\newblock On inverse problems arising in fractional elasticity.
\newblock {\em Journal of Spectral Theory}, 12(4):1383--1404, 2023.

\bibitem{li2023inverse}
Li~Li and Yang Zhang.
\newblock An inverse problem for the fractionally damped wave equation.
\newblock {\em arXiv preprint arXiv:2307.16065}, 2023.

\bibitem{book_SU}
Plamen Stefanov and Gunther Uhlmann.
\newblock Microlocal analysis and integral geometry (working title.
\newblock {\em https://www.math.purdue.edu/~stefanop/publications/book.pdf}.

\bibitem{sylvester1987global}
John Sylvester and Gunther Uhlmann.
\newblock A global uniqueness theorem for an inverse boundary value problem.
\newblock {\em Annals of mathematics}, pages 153--169, 1987.

\bibitem{Uhlmann2021a}
Gunther Uhlmann and Yang Zhang.
\newblock Inverse boundary value problems for wave equations with quadratic
  nonlinearities.
\newblock {\em Journal of Differential Equations}, 309:558--607, 2022.

\bibitem{uhlmann2023inverse}
Gunther Uhlmann and Yang Zhang.
\newblock An inverse boundary value problem arising in nonlinear acoustics.
\newblock {\em SIAM Journal on Mathematical Analysis}, 55(2):1364--1404, 2023.

\bibitem{zhang2023nonlinear}
Yang Zhang.
\newblock Nonlinear acoustic imaging with damping.
\newblock {\em arXiv preprint arXiv:2302.14174}, 2023.

\end{thebibliography}
\bibliographystyle{plain}

\end{document}